\documentclass[a4paper,12pt]{scrartcl}
\usepackage[utf8x]{inputenc}
\usepackage{amsmath,amssymb,amsthm}
\usepackage{color}
\usepackage
	[
	colorlinks,
	backref
	]
{hyperref}
\usepackage[all]{xy}
\usepackage{float}

\title{
Sieves and the Minimal Ramification Problem
}
\author{Lior Bary-Soroker and Tomer M. Schlank}

\newtheorem{thmvoid}{}[section]
\newtheorem{conjecture}[thmvoid]{Conjecture}
\newtheorem{lemma}[thmvoid]{Lemma}
\newtheorem{theorem}[thmvoid]{Theorem}
\newtheorem{proposition}[thmvoid]{Proposition}
\newtheorem{corollary}[thmvoid]{Corollary}
\theoremstyle{definition}
\newtheorem{definition}[thmvoid]{Definition}

\newcommand{\disc}{\mathop{\rm disc}}

\newcommand{\Ram}{{\rm Ram}}
\newcommand{\RamG}{{\rm Branch}}

\newcommand{\Prms}{{\rm Prms}}

\newcommand{\bfd}{{\mathbf{d}}}

\newcommand{\GL}{{\rm GL}}

\newcommand{\Spec}{{\rm Spec}}

\newcommand{\ZZ}{\mathbb{Z}}
\newcommand{\RR}{\mathbb{R}}
\renewcommand{\AA}{\mathbb{A}}
\newcommand{\FF}{\mathbb{F}}
\newcommand{\QQ}{\mathbb{Q}}
\newcommand{\PP}{\mathbb{P}}

\newcommand{\Gal}{\mathop{\rm Gal}}
\begin{document}

\maketitle
\abstract{
The minimal ramification problem  may be considered  as a quantitative version of the inverse Galois problem. 
For a nontrivial finite group $G$, let $m(G)$ 
be the minimal integer $m$ for which there exists a Galois extension $N/\QQ$ that is ramified at exactly $m$ primes 
(including the infinite one). 
So, the problem is to compute or to bound $m(G)$. 

In this paper, we 
bound the ramification of extensions $N/\QQ$ 
obtained as a specialization of a branched covering $\phi\colon C\to \PP^1_{\QQ}$. This leads to novel upper bounds on $m(G)$, for finite groups $G$ that are realizable as the Galois group of a branched covering. 
Some instances of our general results are: 
\[
1\leq m(S_m)\leq 4 \quad \mbox{and} \quad n\leq m(S_m^n) \leq n+4,
\] for all $n,m>0$. Here $S_m$ denotes the symmetric group on $m$ letters, and $S_m^n$ is the direct product of $n$ copies of $S_m$. 
We also get the correct asymptotic of $m(G^n)$, as $n \to \infty$ for a certain class of groups $G$.

Our methods are based on sieve theory results, in particular on the Green-Tao-Ziegler theorem on prime values of linear forms in two variables, on the theory of specialization in arithmetic geometry, and on finite group theory. 
}

\section{Introduction}

This study is motivated by a problem in inverse Galois theory. We first describe the problem and the new results we obtain. Then we discuss the methods that needed to be developed which are of interest by themselves.

\subsection{The Minimal Ramification Problem}
The \emph{inverse Galois problem}, which is one of the central problems in Galois theory, asks whether every finite group $G$ can be realized as  the Galois group $G\cong \Gal(N/\QQ)$ of a Galois extension $N$ of $\QQ$. 
This problem is widely open. There are several different approaches to attack this problem that yield realizations of certain families of groups. The three main approaches found in the literature are:
{\renewcommand{\theenumi}{{\Roman{enumi}}}
\begin{enumerate}
\item \label{specializationapproach} Specializations of geometrically irreducible branched coverings of $\PP^1_{\QQ}$ using Hilbert's irreducibility theorem; see \cite{MalleMatzat,SerreTopics,Voelklein}. 
\item \label{CFTapproach}
Class field theory; see \cite[\S2.1.1]{SerreTopics} or \cite[\S9.6.1]{NSW}.
\item
\label{GRapproach} Galois representations; see \cite[\S5]{SerreTopics} or \cite{KLS1,Wiese,Zywina} for some recent results.
\end{enumerate}
}

The \emph{minimal ramification problem} is a quantitative version of the inverse Galois problem:
For a nontrivial finite group $G$, let $m(G)$ 
be the minimal integer $m$ for which there exists a Galois extension $N/\QQ$ that is ramified at exactly $m$ primes (including the infinite one) such that $\Gal(N/\QQ) \cong G$. 
If no such $N$ exists,  put $m(G)=\infty$. 

The minimal ramification problem asks to calculate or to bound $m(G)$. 
Boston and Markin \cite[Theorem~1.1]{BostonMarkin} prove that if $G\neq 1$ is abelian, then $m(G)=d(G)$, where $d(G)$ is the minimal number of generators of $G$. It is convenient to put $d(1)=1$, and  then since $\QQ$ has no unramified extensions, one gets the lower bound
\begin{equation}\label{eq:BMbnd}
m(G) \geq d(G^{ab}),
\end{equation}
for any nontrivial $G$, where $G^{ab}=G/[G,G]$ is the abelianiztion of $G$. Boston and Markin~\cite{BostonMarkin} conjecture that equality actually holds:

\begin{conjecture}[Boston-Markin]\label{conj:BM}
$m(G)=d(G^{ab})$ for all nontrivial  finite groups $G$.
\end{conjecture}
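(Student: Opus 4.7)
The lower bound $m(G) \geq d(G^{ab})$ is already established in \eqref{eq:BMbnd}, so the plan is to construct, for each nontrivial finite $G$, a Galois extension $N/\QQ$ with $\Gal(N/\QQ) \cong G$ that is ramified at exactly $d(G^{ab})$ primes (counting the infinite one). I would split into two cases according to whether $G$ is perfect or not.

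If $G$ is not perfect, so $G^{ab} \neq 1$, the strategy is to realize $G$ as a fiber product: first use class field theory (approach~\ref{CFTapproach}) to obtain an abelian extension $M/\QQ$ with $\Gal(M/\QQ) \cong G^{ab}$ ramified at exactly $d(G^{ab})$ primes, then use specialization of branched coverings (approach~\ref{specializationapproach}) to realize the commutator subgroup $[G,G]$ as a geometric quotient, and finally combine these by solving an embedding problem over $M$. The delicate point is that the ramification of the geometric specialization must be absorbed into the ramification locus of $M$; concretely, one needs a Hilbert-irreducibility-type statement forcing the specialization parameter to take values that are prime (or units) outside a prescribed finite set of places. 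This is exactly where the Green--Tao--Ziegler theorem on prime values of linear forms should enter, yielding prime specializations with tightly controlled factorization.

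If $G$ is perfect, then $d(G^{ab}) = d(1) = 1$ by the convention used in the excerpt, and the conjecture demands a $G$-extension ramified at exactly one prime. This is the main obstacle. For $G$ a nonabelian finite simple group, I would attempt approach~\ref{GRapproach}, searching for a compatible system of Galois representations whose conductor collapses to a single prime; this is delicate because standard modular or automorphic constructions generically ramify at several finite primes plus infinity. For $G$ perfect but not simple, one could try induction along a chief series, but each nonabelian simple chief factor still requires a one-prime realization, so the difficulty does not dissolve. In particular the conjecture implies an affirmative answer to the inverse Galois problem for perfect groups together with unusually sharp ramification, so any complete proof must simultaneously settle the inverse Galois problem in this regime.

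The hardest step is therefore the perfect case, specifically the existence of one-prime-ramified extensions with prescribed nonabelian simple Galois group. The geometric-specialization methods developed in the present paper should yield optimal or near-optimal bounds whenever $G$ is already known to be the Galois group of a branched cover of $\PP^1_{\QQ}$, because sieving the specialization parameter in two variables via Green--Tao--Ziegler removes ramification at all but a bounded number of primes; but these methods appear to fall short of the single-prime target for a general perfect $G$, which is why I expect the conjecture to remain widely open even after the results of this paper are established.
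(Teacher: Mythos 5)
The statement you were asked to prove is labeled a \emph{conjecture} in the paper, and the paper explicitly says it is widely open; the paper offers no proof of it, only partial progress in the form of upper bounds such as $m(S_m)\leq 4$ and the asymptotic $m(G^n)=d(G^{ab})\cdot n + O(n/\log n)$ for rational-rigid $E(p)$-groups. So there is no ``paper's own proof'' to compare against, and your write-up is, correctly, a plan together with a candid list of obstacles rather than a proof. Your closing assessment --- that the conjecture should remain open even after the results of this paper --- is the right one.

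Within the plan itself there are two genuine gaps that are worth naming concretely. In the non-perfect case, the fiber-product/embedding-problem strategy would need both (a) a solution of the embedding problem $1\to[G,G]\to G\to G^{ab}\to 1$ over the abelian field $M$ with prescribed ramification, which is not guaranteed to exist (the group extension need not split and the local conditions may be obstructed), and (b) a specialization of the geometric cover that introduces \emph{no} new ramified primes outside those already ramified in $M$. Green--Tao--Ziegler, as used in the paper and as you invoke it, only gives that the number of new prime factors of $D(a,b)$ outside a fixed finite set is at most $r$ (the number of linear forms), not zero; that is exactly why the paper's best bound for $S_m$ is $m(S_m)\leq 4$ rather than $m(S_m)=1$, and for $S_m^n$ is $n+4$ rather than $n$. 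In the perfect case the class-field-theory component of your strategy is vacuous (since $G^{ab}=1$), and the whole burden falls on constructing a $G$-extension ramified at exactly one prime; the specialization machinery here bounds ramification but has no mechanism for forcing it down to a single prime. So your proposal is a fair diagnosis of why the conjecture is hard, not a proof, and should be read as such.
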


This conjecture has a lot of evidence in the literature mostly for solvable groups; for example, Jones and Roberts \cite{JonesRoberts} build certain number fields ramified at one prime. 

For solvable groups $G$, one can use Approach~\ref{CFTapproach}, to obtain upper bounds on $m(G)$ and for some subclasses of solvable groups, the full conjecture, see  \cite{BostonMarkin,KisilevskyNeftinSonn,KisilevskySonn,MU2011,Nomura,Plans}. For example, Kisilevsky, Neftin, and Sonn \cite{KisilevskyNeftinSonn} establish the conjecture for semi-abelian $p$-groups. However, to-date, the conjecture is widely open for $p$-groups.

For linear groups, Approach~\ref{GRapproach} is  very effective in giving bounds on ramification. For example, for every prime $p\geq 5$, Zywina \cite{Zywina} realizes ${\rm PSL}_2(\mathbb{F}_p)$ with ramification $\{2,p\}$. (This work is the first realization of these groups as Galois groups \emph{for all} $p$.)

For the special case, $G=S_m$, the symmetric group, the literature contains both theoretical and computational bounds on $m(S_m)$ using Approach~\ref{specializationapproach}: Plans \cite[Remark~3.10]{Plans} remarks that under the deep conjecture in number theory, the Schinzel Hypothesis H, $m(S_m)=1$, as the conjecture predicts; however, an unconditional uniform bound for $m(S_m)$ does not seem to be in the literature. Malle and Roberts \cite{MalleRoberts} construct $S_m$-extensions that are unramified outside at $\{2,3\}$ for some $m$'s between $9$ and $33$.

An analogue of the minimal ramification problem for function fields; that is, when one replaces $\QQ$ by $\FF_q(T)$ is also treated in the literature; see e.g.\ \cite{Witt,Hoelscher}. In this case, it closely relates to the Abhyankar conjecture about the finite quotients of the \'etale fundamental group of an affine curve over an algebraically closed field of positive characteristic that was resolved by Harbater \cite{Harbater} and Raynaud \cite{Raynaud}.

The methods used for non-solvable groups that were discussed above yield a specific extension that realizes the group with a few ramified primes. This is reflected by the fact that proving the conjecture for $G$ and $H$ do not yield a solution for $G\times H$. 
We propose to study the conjecture, in the following asymptotical formulation:
\begin{equation}\label{eq:powers}
m(G^n) = \begin{cases}
d(G^{ab}) \cdot n, & G^{ab}\neq 1\\
1,&G^{ab}=1.
\end{cases}
\end{equation}
To the best of our knowledge, there is no strong evidence for the case of perfect $G$ and large $n$.

In this work we propose an attack on the minimal ramification problem using Approach~\ref{specializationapproach}. Our method produces novel results for groups having a realization as the Galois group of a branched covering and it may be applied to direct products; hence in the asymptotic formulation  \eqref{eq:powers} we get new strong upper bounds, and sometimes asymptotic formulas. The results are discussed in detail below. This attack necessitates developing the theory of specializations, and combining it with sieve theory results on prime values of polynomials, such as combinatorial sieve \cite{HalberstamRichert} and the Green-Tao-Ziegler theorem~\cite{GTZ}.

\subsection{Main Results}
All of our results are for groups $G$ that can be realized as the Galois group of a geometrically irreducible branched covering $\phi \colon C\to \PP^1_{\QQ}$ defined over $\QQ$. 

In particular, for any such group we prove:
\begin{equation}\label{eq:generalG}
m(G^n) = O(n), \qquad n\to \infty,
\end{equation}
where the implied constant is given explicitly. Note that if $G$ is not perfect, then by the simple observation  \eqref{eq:BMbnd} one gets that \eqref{eq:generalG} gives the correct order of magnitude in the sense that 
\[
m(G^n) = \Theta(n).
\]
Further assume that the branch locus of $\phi$ consists on $r$ rational points, then
\begin{equation}\label{eq:generalGrat}
m(G^n) \leq (r-1)n + O(1), \qquad n\to \infty.
\end{equation}
We get a better bound if our group $G$ satisfies the so called $E(p)$-condition for some prime number $p$:  \emph{all} the nontrivial simple quotients of $G$ are $p$-groups, but \emph{none} of the quotients of the commutator $[G,G]$ are (see Definition~\ref{def:Sp} and the examples that follow; e.g., the symmetric group is $E(2)$). 
Assuming $d(G^{ab}) \leq r-2$, we get 
\begin{equation}\label{prime_quotient}
d(G^{ab}) \cdot n \leq m(G^n) \leq (r-2)n + O(1), \qquad n\to \infty.
\end{equation}
In the special case when $G=S_m$, which is of particular interest, we have $r=3$, and we get
\begin{equation}\label{SN-realization}
n\leq m(S_m^n)\leq n+4, \qquad \forall n\geq 1,\ m>0.
\end{equation}
For $n=1$, we can do even better:
\begin{equation}\label{SN-realizationn=1}
m(S_m)\leq 4, \qquad \forall m>0.
\end{equation}
In particular, $m(S_m)$ is bounded.

We emphasize that in  \eqref{SN-realization} and \eqref{SN-realizationn=1} the infinite prime is ramified; that is to say, the minimal number of prime numbers that ramify in $S_m^n$ and $S_m$ extensions is at most $n+3$ and $3$, respectively. 

We note that our bounds in \eqref{SN-realization} and \eqref{SN-realizationn=1} are independent of $m$ and are unconditional. This comes in contrast to the hitherto known results \cite{Plans} that were conditional on the Schinzel Hypothesis H and restricted to $n=1$.

In general, constructing branched covering $\phi\colon C\to \PP^1_{\QQ}$ with specific Galois group $G$ 
is notoriously difficult. The classical method of rigidity, reduces this problem to the group theoretical problem of finding a \emph{rigid} tuple; see  \S\ref{sec_Rigid} or the books \cite{MalleMatzat, SerreTopics,Voelklein}. 
If $G$ has a rational rigid $r$-tuple, then we prove that
\begin{equation}\label{rigid}
m(G)\leq r + \#(\Prms(|G|) \cup \{p\leq r\}).
\end{equation} 
If in addition $G$ satisfies the $E(p)$-condition, then $d((G^n)^{ab}) = d(G^{ab})n$ and we establish the sharp asymptotic formula:
\begin{equation}\label{rigidrigidrigid}
m(G^n) = d(G^{ab})\cdot n + O\left(\frac{n}{\log(n)}\right).
\end{equation}

Finally we  remarks that the methods above work also for general direct products of groups and we have restricted the discussion to direct powers merely for simplicity of presentation. For example, the same proof of \eqref{SN-realization} gives that
\[
m(\prod_{i=1}^{n} S_{m_i})\leq n+4.
\]

\subsection{Methods}
We always write elements of $\PP^1(\QQ)$ as pairs $[a:b]$ with $a,b\in \ZZ$ relatively prime. This presentation is unique up to a sign. For us a prime $p$ denotes either a prime number or the infinite prime of $\QQ$. The completion at $p$ is denoted by $\QQ_p$, so in particular, $\QQ_{\infty}=\RR$.
Every finite set of primes $S$ defines the $S$-adic topology on $\PP^1(\QQ)$ induced by the diagonal embedding $\PP^1(\QQ)\to \prod_{p\in S} \PP^1(\QQ_p)$. 
For a finite set of primes $S$ that contains $\infty$ and for an integer $n\in \ZZ$  we denote
\begin{equation}\label{eq:Prms_S(n)}
\Prms_S(n)=\left\{p : p\mid n \right\}\smallsetminus S .
\end{equation}
The following function plays a key role in the investigation.

\begin{definition}\label{def:B}
Let $D_1, \ldots, D_r\in \ZZ[t,s]$ be non-associate irreducible homogeneous polynomials and $D=\prod_{i} D_i$. We defined $B(D_1, \ldots, D_r)$ to be the minimal positive integer $B$ for which there exists a finite set of primes $S_0=S_0(B)$ that contains $\infty$ such that for every finite set of primes $S_0\subseteq S$ and nonempty $S$-adic neighbourhood $V_S\subseteq \PP^1(\QQ)$ there exists $[a:b]\in V_S$ such that
\[
\#\Prms_S(D(a,b)) \leq B. 
\]
We immediately remark that it follows that there exists infinitely many such $[a:b]$ in each $V_S$. 

For an $r$-tuple  $\mathbf{d} = (d_1, \ldots, d_r) $ of positive integers, we let 
\begin{equation}\label{eq:Bd}
B(\mathbf{d})  = \max_{(D_1,\ldots, D_r)} B(D_1,\ldots, D_r),
\end{equation}
where $(D_1, \ldots , D_r)$ runs over all non-associate irreducible homogenous polynomials of degrees $\deg D_i = d_i$.
\end{definition}

It is far from being obvious that $B(\mathbf{d})$ is finite. However sieve methods may be used to derive  effective bounds in terms of $r$ and $d=\sum_id_i$. From~\cite[Theorem 10.11]{HalberstamRichert} the general bound
\begin{equation}\label{eq:B-gen-bd}
B(\bfd) \leq d - 1+ r\sum_{j=1}^r\frac{1}{j} + r \log\left( \frac{2d}{r} + \frac{1}{r+1} \right) 
\end{equation}
may be derived.
Schinzel Hypothesis H on prime values of polynomials implies
\begin{equation}
\label{eq:B-conj-bd}
B(\bfd) \leq  r. 
\end{equation}
When all $d_i=1$, the Green-Tao-Ziegler theorem~\cite{GTZ} achieves this bound:
\begin{equation}\label{eq:B-GTZ-bd}
 B(1,\ldots, 1)\leq r.
\end{equation}
The formal derivations of all of these results appears in \S\ref{sec:Schinzel}.

Another key notion in our results is that of universally ramified primes:
Let $\phi\colon C\to \PP^1_{\QQ}$ be a geometrically irreducible branched covering.  For each point $[a:b]\in \PP^{1}(\QQ)$, we let $A^{\phi}_{[a:b]}$ be the specialized  algebra at $[a:b]$ which is defined by
\[
\phi^{-1}([a:b])=\Spec(A^{\phi}_{[a:b]}).
\]
Note that $A^{\phi}_{[a:b]}$ is a finite $\QQ$-algebra of degree $[A^{\phi}_{[a:b]}:\QQ]=\deg \phi$ and it is \'etale resp.\ a field if and only if $[a:b]$ is not a branch point of $\phi$ resp.\ $\phi^{-1}([a:b])$ is $\QQ$-irreducible. The set of \emph{universally ramified primes} is defined as
\[
U=U(\phi)= \bigcap_{[a:b]\in \PP^1(\QQ)} \Ram(A^{\phi}_{[a:b]}/\QQ),
\]
where for a finite $\QQ$-algebra $A$ we let $\Ram(A/\QQ) = \{ p  \mid A \otimes \QQ_{p}^{ur} \not \cong (\QQ_p^{ur})^n\}$. 
Here $\QQ_{\infty}^{ur}=\RR$. We also write
\[
\Ram_S(A/\QQ) = \Ram(A/\QQ)\smallsetminus S,
\]
where $S$ is a finite set of primes. We note that $p\not \in \Ram(A/\QQ)$ if and only if $A$ is isomorphic to a product of number fields that are unramified at $p$. Thus $U$ is the set of the primes that ramify under every specialization.
In practice it is easy to bound $U$ from above, simply by taking some random points $[a:b]\in \PP^1(\QQ)$ and calculating the greatest common divisor of the discriminants of the specialized algebras $A^{\phi}_{[a:b]}/\QQ$. However, to calculate $U$ exactly, may be  difficult. 

We denote by $\RamG(\phi)\subset \PP^1_\QQ$ the closed subscheme of branch points of $\phi$. So $\RamG(\phi)$ is the zero locus of some nonzero homogenous polynomial $D(t,s)\in \ZZ[t,s]$.

The last notion we need in order to state the main tool we develop in this paper, is of thin sets~\cite{SerreTopics} in the sense of Serre: A thin set of type 1 in $\PP^1(\QQ)$ is a finite set. A thin set of type 2 is $\phi(C(\mathbb{Q}))$, where $\phi\colon C\to \PP^1_\QQ$ is an irreducible branched covering of degree $\geq 2$. A \textbf{thin} set in $\PP^1(\QQ)$ is a set contained in a finite union of thin sets of types 1 and 2. So the Hilbert irreducible theorem is the statement that $\PP^1(\QQ)$ is not thin.

\begin{theorem}\label{thm:main}
Let $\phi\colon C\to \PP^1_{\QQ}$ be a geometrically irreducible branched covering.
Let $U=U(\phi)$ be the set of universally ramified primes and  $\RamG(\phi) = \{(D_1), \ldots, (D_r)\}\subseteq \PP^1_\QQ$ be the branch locus of $\phi$, where $D_i\in\ZZ[t,s]$ are non-associate homogeneous irreducible polynomials.
Then the set $\Omega$ of all  $[a:b]\in \PP^1(\QQ)$ such that
$\#\Ram_U(A^{\phi}_{[a:b]}/\QQ) \leq B(D_1,\ldots, D_r)$ is not thin.
\end{theorem}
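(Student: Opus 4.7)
The plan is to show that for every thin set $T\subseteq\PP^1(\QQ)$ there is a point in $\Omega\setminus T$, which implies $\Omega$ is not thin. First I would fix a finite set $S_0$ of primes containing $\infty$, $U$, $S_0(B)$, the primes of bad reduction for $\phi$, and the primes dividing the discriminant of a generic fiber. The key dictionary is that for $p\notin S_0$, the prime $p$ lies in $\Ram(A^{\phi}_{[a:b]}/\QQ)$ if and only if $p\mid D(a,b)$, by good reduction of the branched cover combined with the standard discriminant/Dedekind criterion. For each $p\in S_0\setminus U$, the definition of $U$ provides some $[a_p:b_p]$ with $p\notin\Ram(A^{\phi}_{[a_p:b_p]}/\QQ)$; since non-ramification at $p$ is an open condition on the parameter space in the $p$-adic topology, there is a $p$-adic neighborhood $V_p$ of $[a_p:b_p]$ in which $p$ remains unramified in the specialization.

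Next, writing the given thin set as $T\subseteq F\cup\bigcup_{j=1}^{k}\phi_j(C_j(\QQ))$ with $F$ finite and each $\phi_j\colon C_j\to\PP^1_\QQ$ geometrically irreducible of degree $\geq 2$, I would produce, for each $j$, an auxiliary prime $q_j\notin S_0$ and a $q_j$-adic open set $W_j\subseteq\PP^1(\QQ_{q_j})$ disjoint from $\phi_j(C_j(\QQ_{q_j}))\supseteq\phi_j(C_j(\QQ))$. The prime $q_j$ is furnished by Chebotarev density applied to the Galois closure of $\phi_j$: since $\phi_j$ has degree $\geq 2$ and its monodromy group acts transitively, some element acts without fixed points on the generic fiber; for any prime $q_j$ of good reduction whose Frobenius realizes such an element, any point of $\PP^1(\QQ_{q_j})$ away from the mod-$q_j$ branch locus has an empty $\QQ_{q_j}$-fiber, so $W_j$ can be taken as the complement of small $q_j$-adic disks around the branch points. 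Setting $S=S_0\cup\{q_1,\ldots,q_k\}$ and letting $V_S$ be the $S$-adic neighborhood of $\PP^1(\QQ)$ cut out by $V_p$ at $p\in S_0\setminus U$, by $W_j$ at $q_j$, and by $\PP^1(\QQ_p)$ minus the reductions of the points of $F$ at the remaining primes, weak approximation ensures $V_S$ is nonempty, and by construction $V_S\cap T=\emptyset$.

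Definition~\ref{def:B} then yields $[a:b]\in V_S$ with $\#\Prms_S(D(a,b))\leq B(D_1,\ldots,D_r)$. Any prime in $\Ram_U(A^{\phi}_{[a:b]}/\QQ)$ is either in $S_0\setminus U$, which is impossible because $[a:b]\in V_p$, or outside $S_0$, in which case the dictionary from the first paragraph forces it to divide $D(a,b)$ and thus to lie in $\Prms_S(D(a,b))$. Hence $[a:b]\in\Omega\setminus T$, as required. The hardest step is the Chebotarev-plus-good-reduction construction of the $q_j$ and $W_j$, which must be compatible with the already-chosen $V_p$'s at primes in $S_0$; a secondary subtlety is verifying that non-ramification at each $p\in S_0$ really is an open condition in the $p$-adic topology on the parameter space, which for wildly ramified primes requires a careful specialization argument via the discriminant.
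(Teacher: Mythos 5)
Your overall plan — pick a finite control set $S$, use Definition~\ref{def:B} to find $[a:b]$ with few prime divisors of $D(a,b)$ outside $S$, use the open-ness of non-ramification at finitely many primes, and avoid a given thin set by imposing $p$-adic conditions at auxiliary primes — is the same strategy the paper follows. The paper packages the first two ingredients into Theorem~\ref{thm:Ram-Spec} and the thin-set avoidance into Lemma~\ref{thm:HIT}, which is an application of Serre's \cite[Theorem~3.5.3]{SerreTopics}. However, your proposal has two genuine gaps where you try to reprove these ingredients from scratch.

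First, the Chebotarev step is incorrect as stated. For a cover $\phi_j\colon C_j\to \PP^1_\QQ$ there is no single ``Frobenius at $q_j$''; for a prime $q_j$ of good reduction, each unramified point $\bar\zeta\in\PP^1(\FF_{q_j})$ carries its own Frobenius substitution in the monodromy group, and these classes vary with $\bar\zeta$ (indeed, by Chebotarev over the function field $\FF_{q_j}(t)$, for large $q_j$ every conjugacy class is realized). So it is false that ``any point of $\PP^1(\QQ_{q_j})$ away from the mod-$q_j$ branch locus has an empty $\QQ_{q_j}$-fiber''; the squaring map $x\mapsto x^2$ already shows half the residues mod $q_j$ have a non-empty fiber. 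What is true, and what Serre's Theorem~3.5.3 actually provides via Lang--Weil plus Jordan's lemma, is that a \emph{positive proportion} of $\bar\zeta\in\PP^1(\FF_{q_j})$ have fixed-point-free Frobenius, hence empty fiber. You must then take $W_j$ to be the $q_j$-adic tube above \emph{one such} residue, not the complement of the branch locus. The paper sidesteps all of this by citing Serre's theorem directly (Lemma~\ref{thm:HIT}).

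Second, your final bookkeeping loses the auxiliary primes $q_j$. You partition a prime $p\in\Ram_U(A^\phi_{[a:b]}/\QQ)$ into ``$p\in S_0\smallsetminus U$'' and ``$p\notin S_0$''; in the second case you conclude $p\mid D(a,b)$, hence $p\in\Prms_S(D(a,b))$. But $\Prms_S(D(a,b))$ \emph{excludes} all of $S$, and $S=S_0\cup\{q_1,\dots,q_k\}$ strictly contains $S_0$; a prime $q_j$ may well divide $D(a,b)$ and ramify in $A^{\phi}_{[a:b]}$, contributing to $\Ram_U$ without being counted by $\Prms_S(D(a,b))$. To close this you would need to additionally impose, at each $q_j$, the $q_j$-adic condition that $q_j$ is unramified in the specialization (possible since $q_j\notin U$), and then verify this condition and $W_j$ can be met simultaneously inside $\PP^1(\QQ_{q_j})$ — a non-trivial compatibility that your write-up does not address. (The compatibility you flag, between the $q_j$-adic choices and the $V_p$'s for $p\in S_0$, is actually unproblematic since those are at disjoint primes and weak approximation applies.)

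Your ``dictionary'' in the first paragraph is also an overclaim: the correct and needed statement, furnished by Lemma~\ref{lem:finitesetofprimes_proj} and Proposition~\ref{prop:disc_ram}, is only the implication that $p$ ramified and $p\notin S_0$ forces $p\mid D(a,b)$; the converse is neither true in general nor used. You do correctly identify that openness of non-ramification (Lemma~\ref{lem:open-unr} in the paper, proved via the Henselian inverse function theorem) needs an argument, so that part is a flagged gap rather than an error.
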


Theorem~\ref{thm:main} follows from a strong version of Hilbert's irreducibility theorem and the following result on ramification under specialization.

\begin{theorem}\label{thm:proj}\label{thm:Ram-Spec}
Under the notation of Theorem~\ref{thm:main} and with $D=D_1\cdots D_r$ there
exists a finite set of primes $T_\phi$ containing $U\cup\{\infty\}$ such that for every finite set of primes $S$ with $T_\phi\subseteq S$ there exists a nonempty $S$-adic open set $V_S$ of $\PP^1(\QQ)$ satisfying the following property: For every $[a:b] \in V_S$ we have
\begin{enumerate}
\item $\Ram(A^{\phi}_{[a:b]}/\QQ)\cap S  = U$.
\item $\Ram_S(A^{\phi}_{[a:b]}/\QQ) \subseteq \Prms_S(D(a,b))$.
\end{enumerate}
\end{theorem}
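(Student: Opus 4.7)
The argument has two ingredients: a spreading-out argument giving property (2) for all $[a:b]$ close enough $S$-adically, and a local-to-global density argument giving property (1) by choosing prime-by-prime $p$-adic approximations.

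First I would choose a proper finite flat model $\tilde\phi\colon \mathcal{C}\to \PP^1_{\ZZ[1/N]}$ of $\phi$ for some positive integer $N$, arranged so that $\tilde\phi$ is \'etale outside the Zariski closure $Z$ of $\RamG(\phi)$. Setting $T_1=\{\infty\}\cup\{p:p\mid N\}$, for every $p\notin T_1$ and every coprime $(a,b)\in\ZZ^2$, the $\ZZ_p$-section $[a:b]\colon \Spec \ZZ_p\to \PP^1_{\ZZ_p}$ meets $Z$ modulo $p$ exactly when $p\mid D(a,b)$; in the contrary case the fibre of $\tilde\phi$ over the section is \'etale over $\Spec \ZZ_p$, whence $A^{\phi}_{[a:b]}$ is unramified at $p$. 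This yields the inclusion $\Ram_S(A^{\phi}_{[a:b]}/\QQ)\subseteq \Prms_S(D(a,b))$ for every $S\supseteq T_1$ and every $[a:b]\in\PP^1(\QQ)\setminus\RamG(\phi)$, which is property (2).

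Next I would establish the following local openness statement, which is the crux of the argument: for every prime $p$ and every $x\in \PP^1(\QQ)\setminus\RamG(\phi)$ at which $A^{\phi}_x$ is unramified at $p$, there exists a $\QQ_p$-adic open neighbourhood $V_{p,x}$ of $x$ in $\PP^1(\QQ_p)$ such that $A^{\phi}_y$ is unramified at $p$ for every rational point $y\in V_{p,x}$. For $p\notin T_1$ this is immediate from \'etaleness of $\tilde\phi$ on $\PP^1_{\ZZ_p}\setminus Z$ together with continuity of the $\ZZ_p$-section determined by $[a:b]$; for $p\in T_1$, which is the most delicate point, one invokes Krasner's lemma: the \'etale $\QQ_p$-algebra $A^{\phi}_x\otimes \QQ_p$ is a product of unramified extensions of $\QQ_p$, and this decomposition is preserved under sufficiently small $p$-adic perturbations of the parameter.

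Set $T_\phi=T_1\cup U$. For any finite $S\supseteq T_\phi$ I would assemble $V_S$ prime by prime: for each $p\in S\setminus U$, the definition of $U$ furnishes some $x_p\in\PP^1(\QQ)$ with $p\notin\Ram(A^{\phi}_{x_p}/\QQ)$ (necessarily $x_p\notin\RamG(\phi)$), and the openness statement produces a $\QQ_p$-open $V_p\subseteq \PP^1(\QQ_p)\setminus\RamG(\phi)$ on which unramifiedness at $p$ persists; for $p\in U$ let $V_p$ be any nonempty $\QQ_p$-open subset of $\PP^1(\QQ_p)\setminus\RamG(\phi)$. Weak approximation on $\PP^1$ guarantees that
\[
V_S=\bigl\{[a:b]\in\PP^1(\QQ) : [a:b]\in V_p \text{ for every } p\in S\bigr\}
\]
is a nonempty $S$-adic open subset of $\PP^1(\QQ)$. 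For any $[a:b]\in V_S$ property (2) has already been proved, while property (1) follows from $U\subseteq\Ram(A^{\phi}_{[a:b]}/\QQ)\cap S$ (by the definition of $U$ and $U\subseteq S$) together with the reverse inclusion, which holds because any $p\in S\setminus U$ satisfies $[a:b]\in V_p$ and so $p\notin\Ram(A^{\phi}_{[a:b]}/\QQ)$ by construction. The main obstacle is the openness statement of paragraph two at primes of bad reduction of the chosen integral model, where étaleness is not available and one must control the isomorphism type of the local specialization algebra via Krasner.
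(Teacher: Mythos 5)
Your proposal follows essentially the same architecture as the paper's proof: an integral/ideal-theoretic argument (spreading out an étale model, resp.\ comparing $d_{\phi,\ZZ}$ with $(D)$ via Lemma~\ref{lem:finitesetofprimes_proj}) to establish property (2), plus a $p$-adic openness-of-the-unramified-locus argument assembled prime by prime and glued by weak approximation to establish property (1). Where you diverge is in the openness lemma itself (the paper's Lemma~\ref{lem:open-unr}): you split into cases, using \'etaleness of the model for $p\notin T_1$ and Krasner's lemma for $p\in T_1$, whereas the paper argues uniformly over $\QQ_p^{ur}$ via the Henselian inverse function theorem, avoiding any case distinction.

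One point to tighten: your \'etale-model argument for $p\notin T_1$ only gives openness around a point $x$ whose reduction mod $p$ avoids $Z$, i.e.\ with $p\nmid D(a,b)$. But the point $x_p$ that you extract from the definition of $U$ comes with no such control — it is merely some rational point at which $p$ happens to be unramified, and it may reduce into $Z_{\FF_p}$ even while $A^{\phi}_{x_p}$ is unramified at $p$. As written, the \'etaleness argument does not furnish the neighbourhood $V_p$ for such an $x_p$. The fix is easy and in fact simplifies the proof: for $p\notin T_1$ (assuming $T_1$ absorbs the vertically ramified primes), simply take $V_p$ to be the $p$-adic open and nonempty set $\{[a:b]: p\nmid D(a,b)\}$, bypassing the definition of $U$ altogether; or, uniformly, run the Krasner argument at every $p\in S\setminus U$, which is what the paper's $\QQ_p^{ur}$ inverse-function argument effectively does and which removes the case split entirely.
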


\section{Proof that Theorem~\ref{thm:Ram-Spec} implies   Theorem~\ref{thm:main}}
Hilbert's irreducibility theorem states that $\PP^1(\QQ)$ is not thin. We shall need a strong variant of the theorem that gives $S'$-adic neighbourhoods in the complement of any thin set:

\begin{lemma}\label{theorem}\label{thm:HIT}
Let $Z$ be a thin set in $\PP^1(\QQ)$.
For every finite set of primes $S$ there exists a finite set of primes $S'$ and a nonempty $S'$-adic neighbourhood $V_{S'}$ such that $S\cap S'=\emptyset$ and $Z\cap V_{S'}=\emptyset$.
\end{lemma}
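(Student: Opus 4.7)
The plan is to exhibit, for each ``thin'' constituent of $Z$, a local obstruction at a prime outside $S$: a residue class forbidding membership in that constituent. I would first decompose $Z = Z_1 \cup \cdots \cup Z_k$ where each $Z_j$ is either a finite set (type 1) or $Z_j = \phi_j(C_j(\QQ))$ for some irreducible branched covering $\phi_j\colon C_j \to \PP^1_\QQ$ of degree $\geq 2$ (type 2). I then aim to attach to each $Z_j$ a prime $p_j$, all distinct and disjoint from $S$, together with a neighbourhood in $\PP^1(\QQ_{p_j})$ disjoint from $Z_j$, and finally to assemble them via approximation.

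For each type 2 piece I would pass to the Galois closure $\tilde\phi_j \colon \tilde C_j \to \PP^1_\QQ$ with Galois group $G_j$, and let $H_j < G_j$ be the stabilizer of a component of the generic fiber, so that $C_j = \tilde C_j / H_j$ and $[G_j : H_j] = \deg \phi_j \geq 2$. After spreading out $\tilde \phi_j$ to a model over $\Spec \ZZ[1/N_j]$, for $p_j \nmid N_j$ and any unramified point $\bar x_j \in \PP^1(\FF_{p_j})$ the fiber $\bar\phi_j^{-1}(\bar x_j)$ admits an $\FF_{p_j}$-rational point precisely when the Frobenius conjugacy class at $\bar x_j$ inside $G_j$ meets $\bigcup_{g \in G_j} g H_j g^{-1}$. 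Since $H_j$ is a proper subgroup of $G_j$, a classical theorem of Jordan supplies a conjugacy class $[\sigma_j] \subseteq G_j \setminus \bigcup_g g H_j g^{-1}$. The Chebotarev density theorem for function fields over $\FF_{p_j}$ (via Lang--Weil) then produces, for all sufficiently large $p_j \notin S$, a point $\bar x_j \in \PP^1(\FF_{p_j})$ whose Frobenius lies in $[\sigma_j]$; for such $\bar x_j$ the fiber has no $\FF_{p_j}$-rational point, and therefore any $[a:b] \in \PP^1(\QQ)$ reducing to $\bar x_j$ modulo $p_j$ cannot equal $\phi_j(y)$ for any $y \in C_j(\QQ)$, since $y$ would reduce to an $\FF_{p_j}$-rational preimage of $\bar x_j$.

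For the type 1 pieces I would merge them into a single finite set $F$ and pick one large prime $q$, outside $S$ and distinct from the $p_j$, such that $F$ does not cover $\PP^1(\FF_q)$; then I choose $\bar x_q \in \PP^1(\FF_q)$ outside the reduction of $F$. Setting $S' = \{q\} \cup \{p_j\}$ and letting $V_{S'}$ be the $S'$-adic neighbourhood consisting of points reducing to $\bar x_q$ mod $q$ and to $\bar x_j$ mod $p_j$ for each $j$, weak approximation on $\PP^1(\QQ)$ ensures $V_{S'} \neq \emptyset$, and by construction $V_{S'} \cap Z_j = \emptyset$ for every $j$, whence $V_{S'} \cap Z = \emptyset$.

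The hard part is the Jordan/Chebotarev step for the type 2 constituents: producing for each such $Z_j$ an arbitrarily large prime $p_j$ outside $S$ together with an $\FF_{p_j}$-point of $\PP^1$ whose $\phi_j$-fiber lacks an $\FF_{p_j}$-rational point. The rest is bookkeeping; the Jordan--Chebotarev input provides a uniform supply of local obstructions, and strong approximation on $\PP^1$ then weaves them together into the required nonempty $S'$-adic neighbourhood $V_{S'}$ disjoint from $Z$.
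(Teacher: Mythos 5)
Your argument is, in substance, an unfolding of exactly what the paper cites: the paper's one-line proof invokes Serre's Theorem~3.5.3 (a thin set fails to be $S'$-adically dense for a suitable finite $S'$ disjoint from $S$), and then applies weak approximation on $\PP^1_\QQ$. You reprove that input from scratch -- decomposition into type-1 and type-2 pieces, Jordan's theorem, function-field Chebotarev via Lang--Weil -- and then the assembly by residue conditions and CRT is the same as the paper's final step. So the overall route coincides with the paper's; the difference is only that you prove the cited theorem rather than quoting it.

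There is, however, a genuine gap in the Jordan--Chebotarev step as you have written it. Jordan's theorem gives a conjugacy class $[\sigma_j]$ of $G_j=\Gal(\QQ(\tilde C_j)/\QQ(t))$ missing $\bigcup_{g}gH_jg^{-1}$, and you then assert that ``for all sufficiently large $p_j$'' there is a degree-one point $\bar x_j\in\PP^1(\FF_{p_j})$ whose Frobenius lies in $[\sigma_j]$. This is false when the constant field $K_j$ (the algebraic closure of $\QQ$ inside $\QQ(\tilde C_j)$) is strictly larger than $\QQ$: the Frobenius at any degree-one point of $\PP^1_{\FF_{p_j}}$ lies in the single coset of the geometric group $\bar G_j=\Gal(\bar\QQ(\tilde C_j)/\bar\QQ(t))$ that maps to $\mathrm{Frob}_{p_j}\in\Gal(K_j/\QQ)$, so if $[\sigma_j]$ sits in a different coset no such $\bar x_j$ exists for that $p_j$, however large. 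A concrete obstruction: $G_j=S_3$, $\bar G_j=A_3$, $H_j$ of order two; the nontrivial coset of $\bar G_j$ is entirely covered by conjugates of $H_j$, so if $\mathrm{Frob}_{p_j}$ is nontrivial in $\Gal(K_j/\QQ)\cong\ZZ/2$ every fiber over $\FF_{p_j}$ has a rational point. The repair is routine but must be made: let $\gamma_j\in\Gal(K_j/\QQ)$ be the image of $\sigma_j$, and choose $p_j$ from the positive-density set of primes (by Chebotarev in the number field $K_j/\QQ$) with $\mathrm{Frob}_{p_j}=\gamma_j$, avoiding $S$, the already chosen $p_{j'}$, and the primes of bad reduction. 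With $p_j$ so restricted, the function-field Chebotarev/Lang--Weil step applies to the class $[\sigma_j]$ and the rest of your argument goes through. (The ``weak/strong approximation'' you invoke at the end is, for $\PP^1$, just the existence of a coprime pair $(a,b)$ in prescribed residue classes, and is unproblematic.)
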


\begin{proof}
By~\cite[Theorem~3.5.3]{SerreTopics} there exists $S'$ with $S'\cap S=\emptyset$ such that $Z$ is not $S'$-adic dense in $\prod_{p\in S'} \PP^1(\QQ_p)$.
So there exists an open subset $U$ of $\prod_{p\in S'} \PP^1(\QQ_p)$ with $Z\cap U=\emptyset$. Since $\PP^1_{\QQ}$ has the weak approximation property (Page~30 in \emph{loc.cit.})\ $V_{S'}:=U\cap \PP^1(\QQ)\neq \emptyset$, as needed.
\end{proof}

\begin{proof}[Proof of Theorem~\ref{thm:main}]
It suffices to show that $\Omega$ is not contained in any thin set $Z$.
Put $B=B(D_1,\ldots, D_r)$, and let $T_\phi$ be as in  Theorem~\ref{thm:Ram-Spec}. Let $S_0=S_0(B)$ be the set of primes from Definition~\ref{def:B}. By Theorem~\ref{thm:Ram-Spec}, for $S_1=T_{\phi}\cup S_0$, there exists a nonempty $S_1$-adic neighbourhood $V_{S_1}$
such that for every $\zeta=[a:b]\in V_{S_1}$ we have
\begin{equation}\label{eq:pf-main1}
\#\Ram_U(A^{\phi}_\zeta) = \#\Ram_{S_1}(A^{\phi}_\zeta) \leq \#\Prms_{S_1}(D(a,b)).
\end{equation}

By Lemma~\ref{thm:HIT} there exists a finite set of primes  $S_1'$ such that  $S_1\cap S_1'=\emptyset$ and there exists a nonempty $S_1'$-adic neighbourhood $V_{S_1'}$ such that $V_{S_1'}\cap Z=\emptyset$. Thus $V_S=V_{S_1}\cap V_{S_1'}$ is an $S$-adic neighbourhood, for $S=S_1\cup S_{1}'$ which is nonempty by the Chinese Remainder Theorem and that satisfies 
\begin{equation}\label{eq:ES}
V_S\cap Z=\emptyset.
\end{equation}   
Since $S_0\subseteq S$, by Definition~\ref{def:B} and by \eqref{eq:pf-main1}, there exists $\zeta\in V_S$ such that 
\[
\#\Ram_U(A^{\phi}_\zeta) \leq B.
\]
This together with \eqref{eq:ES} implies that $\zeta\in \Omega\smallsetminus Z$, so $\Omega\not\subseteq Z$.
\end{proof}

\section{Ramifications}
\subsection{Preliminaries in Commutative Algebra}
Recall that by $[a:b]\in \PP^1(\QQ)$, we always mean that $a$ and $b$ are co-prime integers. This uniquely defines the pair $a,b$, up to a sign.
Given an homogenous ideal $I \lhd \ZZ[t,s]$ and $[a:b] \in \PP^1(\QQ)$, we denote by
\[
I([a:b]) = \{f(a,b) : f\in I\} \lhd \ZZ
\]
which is an ideal in $\ZZ$.
For a prime number $p$ we denote by
$v_p(n)$ the $p$-adic valuation of $n$. We extend the functions $v_p(\bullet)$ and $\Prms_S(\bullet)$ (defined in \eqref{eq:Prms_S(n)}) from the integers to ideals
in the obvious way:
If $J= (n) \lhd \ZZ$, then
\begin{align*}
\Prms_S(J)&=\Prms_S(n) , \quad \mbox{and}\\
v_p(J)&=v_p(n).
\end{align*}
For a prime number $p$, recall that $\QQ_p^{ur}$ denotes the maximal unramified extension of $\QQ_p$  and that $\ZZ_p^{ur}$ is the integral closure of $\ZZ_p$ in $\QQ_p^{ur}$; i.e., the subring of elements with non-negative valuation (w.r.t.\ the unique lifting of $v_p$ to $\QQ_p^{ur}$).

\begin{lemma}\label{lem:finitesetofprimes_proj}
Let $I\lhd \ZZ[t,s]$ be a nonzero homogeneous ideal and let $D(t,s) \in \ZZ[t,s]$ be a homogeneous polynomial such that $D\QQ[t,s]=I\QQ[t,s]$. Then, there exists a finite set of primes $S$ that contains the infinite prime such that for every $[a:b]\in \PP^1(\QQ)$ 
and for every $p\not\in S$ we have $v_p(I(a,b)) = v_p(D(a,b))$.
\end{lemma}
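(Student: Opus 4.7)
The plan is to exhibit the hypothesis $D\QQ[t,s]=I\QQ[t,s]$ as two finite systems of polynomial identities over $\QQ$, and then take $S$ to be the (finite) set of primes appearing in the denominators that arise when these identities are cleared to live in $\ZZ[t,s]$. Once the identities hold in $\ZZ[t,s]$, specializing at $(a,b)$ produces identities in $\ZZ$ in which the multiplicative constants are units modulo $p$ for every $p\notin S$, so $p$-adic valuations transfer cleanly.

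In more detail, I would first choose finitely many homogeneous generators $f_1,\ldots,f_k$ of $I$ (using Noetherianity of $\ZZ[t,s]$ and the fact that a homogeneous ideal admits homogeneous generators). Since each $f_i$ lies in $I\QQ[t,s]=D\QQ[t,s]$, I can write $f_i=D\cdot g_i$ with $g_i\in\QQ[t,s]$; dually, since $D\in I\QQ[t,s]$, I can write $D=\sum_{j} h_j f_j$ with $h_j\in\QQ[t,s]$. Let $M$ and $N$ be positive integers clearing the denominators, so that $Mg_i\in\ZZ[t,s]$ for each $i$ and $Nh_j\in\ZZ[t,s]$ for each $j$, and set $S=\{\infty\}\cup\{p:p\mid MN\}$. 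The first family of identities, specialized at $(a,b)$, yields $Mf_i(a,b)=D(a,b)\cdot(Mg_i)(a,b)$ in $\ZZ$; for $p\nmid M$ this forces $v_p(f_i(a,b))\geq v_p(D(a,b))$, and taking the minimum over $i$ gives $v_p(I(a,b))\geq v_p(D(a,b))$. The second family, specialized at $(a,b)$, exhibits $ND(a,b)$ as an element of $I(a,b)$, so $v_p(D(a,b))\geq v_p(I(a,b))$ whenever $p\nmid N$. Combined, equality holds for every $p\notin S$.

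There is no real obstacle: the argument is essentially bookkeeping of denominators. The only point requiring minor care is the degenerate case in which $D(a,b)=0$ or some $f_i(a,b)=0$; the two relations above show that each of these forces the other to vanish as well, so both valuations equal $+\infty$ and the claimed equality still holds.
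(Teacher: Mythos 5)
Your proof is correct and takes essentially the same route as the paper: choose finitely many homogeneous generators of $I$, express the two containments $I\QQ[t,s]\subseteq D\QQ[t,s]$ and $D\in I\QQ[t,s]$ via $\QQ$-coefficient identities, clear denominators, and let $S$ consist of $\infty$ together with the primes dividing those denominators. The only cosmetic difference is that you specialize at $(a,b)$ first and then compare $p$-adic valuations term by term, whereas the paper first observes that $I\ZZ_p[t,s]=D\ZZ_p[t,s]$ for $p\notin S$ and then specializes; the content is identical, and your remark on the degenerate case $D(a,b)=0$ is a harmless extra check that the paper handles implicitly through the ideal equality.
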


\begin{proof}
The ideal $I$ is generated by finitely many homogeneous polynomials, say $I=\sum_{i=1}^k g_i \ZZ[t,s]$. Thus $D\QQ[t,s] = \sum_{i=1}^k g_i \QQ[t,s]$, which implies that there exist homogeneous polynomials 
\[
c_1(t,s), \ldots, c_k(t,s), d_1(t,s), \ldots, d_k(t,s)\in \QQ[t,s]
\] 
such that $g_i=c_i D$, $i=1,\ldots, k$ and $D= \sum_{i=1}^k d_i g_i$. Let $S'$ be the set of primes dividing the denominators of the coefficients of  $c_1, \ldots, c_k,d_1, \ldots, d_k$. Let $S:= S' \cup \{\infty\}$. Then, for $[a:b]\in \PP^1(\QQ)$ and $p\notin S$, we have that $c_i, d_i \in \ZZ_p[t,s]$ for all $1\leq i \leq k$;
thus $I\ZZ_p[t,s]  = D\ZZ_p[t,s]$. For every $[a:b] \in \PP^1(\QQ)$ we thus have $\ZZ_p I(a,b)= \ZZ_p (D(a,b))$, hence the desired assertion.
\end{proof}

\begin{lemma}\label{lem:hensel}
Let $p$ be a finite prime and let 
\[
\phi\colon F \to \Spec{\ZZ_p^{ur}}
\]
be an \'etale map of degree $n$.
Then $F \cong \Spec{(\ZZ_p^{ur})^n}$. 
\end{lemma}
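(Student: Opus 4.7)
The plan is to exploit the fact that $\ZZ_p^{ur}$ is a strictly henselian local ring: it is henselian because $\ZZ_p$ is henselian and henselianness is preserved under integral algebraic extensions, and its residue field equals $\overline{\FF}_p$, which is separably (indeed algebraically) closed. Over such a base, every finite \'etale algebra splits as a product of copies of the base ring, and that is exactly what we want.

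First I would write $F = \Spec A$, where $A$ is a finite \'etale $\ZZ_p^{ur}$-algebra of constant rank $n$. Since $\ZZ_p^{ur}$ is local and $A$ is finitely presented and flat, $A$ is free of rank $n$ as a $\ZZ_p^{ur}$-module; in particular $A$ is semilocal with finitely many maximal ideals. Reducing modulo the maximal ideal $\mathfrak{m}$ of $\ZZ_p^{ur}$ gives a finite \'etale algebra
\[
\bar A = A \otimes_{\ZZ_p^{ur}} \overline{\FF}_p
\]
of rank $n$ over $\overline{\FF}_p$. A finite \'etale algebra over a field is a product of finite separable field extensions, and since $\overline{\FF}_p$ is algebraically closed there are no nontrivial such extensions; hence $\bar A \cong \overline{\FF}_p^n$.

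Next I would lift the decomposition to $A$ using the henselian property. Let $\bar e_1, \ldots, \bar e_n \in \bar A$ be the pairwise orthogonal idempotents summing to $1$. Because $(\ZZ_p^{ur},\mathfrak{m})$ is henselian and $A$ is finite over it, the idempotents of $\bar A = A/\mathfrak{m} A$ lift uniquely to a complete set of pairwise orthogonal idempotents $e_1, \ldots, e_n \in A$. This yields a ring decomposition
\[
A \;=\; A e_1 \times \cdots \times A e_n,
\]
in which each factor $A_i := A e_i$ is a finite \'etale $\ZZ_p^{ur}$-algebra that is local (its reduction modulo $\mathfrak{m}$ is the field $\overline{\FF}_p$) and of rank $1$. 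A rank-$1$ finite \'etale extension of a local ring whose residue field extension is trivial is the trivial extension, so $A_i \cong \ZZ_p^{ur}$ for every $i$, and $A \cong (\ZZ_p^{ur})^n$ as claimed.

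The main subtlety is invoking the correct form of Hensel's lemma to lift orthogonal idempotents through the non-noetherian ring $\ZZ_p^{ur}$; this is where I would be careful, but since $A$ is module-finite over the henselian base $\ZZ_p^{ur}$, the standard henselian lifting of idempotents applies verbatim, so the argument is essentially formal once this ingredient is cited cleanly.
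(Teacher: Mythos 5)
Your proof is correct and follows the same underlying argument as the paper: the paper simply cites Milne's Proposition~I.4.4, which asserts that finite \'etale algebras over a strictly henselian local ring split as products of copies of the base, and your argument is precisely the standard proof of that proposition (reduce mod $\mathfrak{m}$, use that the residue field is algebraically closed, lift the complete system of orthogonal idempotents via the henselian property, and observe each rank-one factor is trivial).
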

\begin{proof}
The ring $\ZZ_p^{ur}$ is a Henselian ring with algebraically closed residue field. Thus the assertion follows from \cite[Proposition~I.4.4]{Milne}.
 \end{proof}

Let $\phi\colon C \to \PP^1_\QQ$ be a branched covering. The branch locus $\RamG(\phi)\subset \PP^1_\QQ$ is a closed subscheme of dimension $0$, so $\RamG(\phi)$ is the zero locus of some nonzero homogenous polynomial $D(t,s)\in \ZZ[t,s]$.

The following fact on the closeness of the branch locus over $\ZZ$ is well known.
\begin{lemma}
Let $\phi_{\ZZ}\colon\mathfrak{C} \to \PP^1_\ZZ$ be the normalization of $\PP^1_\ZZ$ in the generic point of $C$ via $\phi$. Then, the branch locus $\mathfrak{R}_{\phi} \subset \PP^1_\ZZ$ of $\phi_{\ZZ}$ is closed.
\end{lemma}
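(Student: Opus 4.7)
The plan is to identify the branch locus $\mathfrak{R}_\phi$ as the image, under the finite morphism $\phi_\ZZ$, of the ramification locus $R \subset \mathfrak{C}$, and to argue that both the ramification locus is closed in $\mathfrak{C}$ and its image in $\PP^1_\ZZ$ is closed.

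First I would verify that $\phi_\ZZ \colon \mathfrak{C} \to \PP^1_\ZZ$ is a finite morphism. Since $\ZZ$ is an excellent ring, so is $\PP^1_\ZZ$; in particular it is Nagata (Japanese). Hence the normalization of $\PP^1_\ZZ$ in the finite field extension $\QQ(C)/\QQ(t)$ induced by $\phi$ is module-finite over $\PP^1_\ZZ$. Equivalently, working on the standard affine charts $\Spec \ZZ[t]$ and $\Spec \ZZ[1/t]$ of $\PP^1_\ZZ$, the integral closure of $\ZZ[t]$ in $\QQ(C)$ is a finitely generated $\ZZ[t]$-module, and likewise for the other chart. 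So $\phi_\ZZ$ is finite and, in particular, proper.

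Next I would show that the ramification locus $R \subset \mathfrak{C}$ (the points at which $\phi_\ZZ$ fails to be étale) is closed. For a finite type morphism of Noetherian schemes, the étale locus is open in the source: it is the intersection of the flat locus (which is open by EGA IV since $\mathfrak{C}$ is Noetherian and $\phi_\ZZ$ is finite of finite presentation) with the unramified locus. The unramified locus equals the complement of the support of the coherent sheaf $\Omega^1_{\mathfrak{C}/\PP^1_\ZZ}$, and the support of a coherent sheaf on a Noetherian scheme is closed. Thus $R = \mathfrak{C} \setminus (\text{étale locus})$ is closed in $\mathfrak{C}$.

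Finally, since $\phi_\ZZ$ is finite it is a closed map, so $\mathfrak{R}_\phi = \phi_\ZZ(R)$ is closed in $\PP^1_\ZZ$, which is what was claimed. The main conceptual obstacle is really just the finiteness of $\phi_\ZZ$: this rests on Nagata's theorem applied to the excellent ring $\ZZ[t]$, and once it is in hand the rest is a formal consequence of standard results on the openness of the étale locus combined with the fact that proper morphisms are closed.
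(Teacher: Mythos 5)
Your proof follows essentially the same route as the paper's: establish finiteness of $\phi_\ZZ$ via the Nagata/Japanese property of $\PP^1_\ZZ$, show the ramification locus in $\mathfrak{C}$ is closed using coherence of $\Omega_{\mathfrak{C}/\PP^1_\ZZ}$, and conclude that $\mathfrak{R}_\phi$ is closed because finite morphisms are closed. The only cosmetic difference is that you obtain closedness of the ramification locus as the complement of the \'etale locus (flat intersected with unramified, both open), while the paper cites directly that it is the support of the coherent sheaf $\Omega_{\mathfrak{C}/\PP^1_\ZZ}$; these agree in this setting, so the argument is the same in substance.
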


\begin{proof}
Since  $\PP^1_\ZZ$ is Nagata \cite[Tag 035B]{stacks-project}, hence universally Japanese \cite[Tag 033Z]{stacks-project}, and since $\PP^1_\ZZ$ is integral, it is Japanese which means by definition that $\phi_\ZZ$ is finite. 
Thus, by \cite[Tag 024P]{stacks-project}, the ramification locus consists of all $x$ at which the stalk of the coherent sheaf $\Omega_{\mathfrak{C}/\PP^1_{\ZZ}}$ is nontrivial. The sheaf $\Omega_{\mathfrak{C}/\PP^1_{\ZZ}}$ is locally of finite type by \cite[Tag 01V2]{stacks-project}  hence  \cite[Tag 01BA]{stacks-project} implies that the ramification locus is closed. Thus we conclude that the branch locus $\mathfrak{R}_{\phi}$, which is the image of the ramification locus under $\phi_{\ZZ}$ is  closed in $\PP^1_{\ZZ}$ as finite morphisms are closed. 
\end{proof}

Away from $\mathfrak{R}_{\phi}$, the morphism $\phi_{\ZZ}$ is \'etale.
We denote by $d_{\phi,\ZZ} \lhd \ZZ[t,s]$ the homogenous ideal  that defines $\mathfrak{R}_{\phi}$.
We have:
\begin{equation}\label{eq:branchideals}
d_{\phi,\ZZ} \QQ[t,s] = D(t,s)\QQ[t,s].
\end{equation}
\begin{proposition}\label{prop:disc_ram}
Let $\phi\colon C \to \PP^1_\QQ$ be a branched covering, let $[a:b] \in \PP^1(\QQ)$, and let $p$ be a prime number. Assume that $v_p(d_{\phi,\ZZ}(a,b))=0$. Then
$A^{\phi}_{[a:b]}$ is unramified at $p$.
\end{proposition}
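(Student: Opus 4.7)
The plan is to give a geometric interpretation of the hypothesis: $v_p(d_{\phi,\ZZ}(a,b))=0$ says that the $\ZZ$-point $\sigma\colon \Spec \ZZ \to \PP^1_\ZZ$ determined by the coprime pair $(a,b)$ misses the integral branch locus $\mathfrak{R}_\phi$ after base change to $\Spec \ZZ_p^{ur}$. Combined with the observation (made immediately after the preceding lemma) that $\phi_\ZZ\colon \mathfrak{C} \to \PP^1_\ZZ$ is finite and \'etale outside $\mathfrak{R}_\phi$, the pullback of $\phi_\ZZ$ along $\sigma$, base-changed to $\ZZ_p^{ur}$, will be finite \'etale over $\Spec \ZZ_p^{ur}$; Lemma~\ref{lem:hensel} will then force it to split as $\Spec (\ZZ_p^{ur})^n$, and descending to the generic fiber will give the claim.

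First I would translate the hypothesis geometrically. The coprimality $\gcd(a,b)=1$ implies that the pair defines a morphism $\sigma\colon \Spec \ZZ \to \PP^1_\ZZ$, and pulling back the ideal $d_{\phi,\ZZ}$ along $\sigma$ yields the ideal $d_{\phi,\ZZ}(a,b) \subseteq \ZZ$. Base-changing to $\sigma_p\colon \Spec \ZZ_p^{ur} \to \PP^1_{\ZZ_p^{ur}}$, the scheme-theoretic preimage of $\mathfrak{R}_\phi \times_\ZZ \ZZ_p^{ur}$ along $\sigma_p$ is the closed subscheme of $\Spec \ZZ_p^{ur}$ cut out by $d_{\phi,\ZZ}(a,b) \cdot \ZZ_p^{ur}$. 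The hypothesis $v_p(d_{\phi,\ZZ}(a,b))=0$ says precisely that this ideal contains an element of $p$-adic valuation zero, hence equals the unit ideal, so $\sigma_p$ misses the branch locus entirely.

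Next I would form the fiber product $\mathfrak{F} := \mathfrak{C} \times_{\PP^1_\ZZ} \Spec \ZZ_p^{ur}$ taken along $\sigma_p$. As the base change of the finite morphism $\phi_\ZZ$, the induced morphism $\mathfrak{F} \to \Spec \ZZ_p^{ur}$ is finite of degree $n := \deg \phi$. Since $\phi_\ZZ$ is \'etale on the open complement of $\mathfrak{R}_\phi$ and $\sigma_p$ factors through that complement by the previous step, the base change $\mathfrak{F} \to \Spec \ZZ_p^{ur}$ is also \'etale. Lemma~\ref{lem:hensel} then gives $\mathfrak{F} \cong \Spec (\ZZ_p^{ur})^n$.

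Finally I would descend to the generic fiber. The identification $C = \mathfrak{C} \times_\ZZ \QQ$ shows that the specialized algebra $A^{\phi}_{[a:b]}$ is the coordinate ring of the generic fiber of the $\ZZ$-scheme $\mathfrak{C} \times_{\PP^1_\ZZ} \Spec \ZZ$, the pullback of $\phi_\ZZ$ along the original section $\sigma$. Since fiber products commute in any order, tensoring with $\QQ_p^{ur}$ yields $A^{\phi}_{[a:b]} \otimes_\QQ \QQ_p^{ur} \cong (\QQ_p^{ur})^n$, which is precisely the definition of $A^{\phi}_{[a:b]}$ being unramified at $p$. The main technical point is the bookkeeping of the two commuting base changes (restricting $\sigma$ to $\ZZ_p^{ur}$ and passing to the generic fiber) and checking that they are compatible; once that is in hand, the argument is a direct application of \'etale base change together with Lemma~\ref{lem:hensel}.
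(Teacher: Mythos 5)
Your proposal is correct and follows essentially the same route as the paper's proof: translate $v_p(d_{\phi,\ZZ}(a,b))=0$ into the statement that the section avoids the integral branch locus at $p$, base-change along $\Spec\ZZ_p^{ur}$ to get a finite \'etale scheme, invoke Lemma~\ref{lem:hensel} to split it into $\Spec(\ZZ_p^{ur})^{\deg\phi}$, and pass to the generic fiber. The only cosmetic difference is that the paper phrases the first step via the reduction $\zeta_p\in\PP^1_{\FF_p}$ lying off $\mathfrak{R}_\phi$, whereas you phrase it via the pullback ideal $d_{\phi,\ZZ}(a,b)\cdot\ZZ_p^{ur}$ being the unit ideal; these are equivalent.
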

\begin{proof}
We identify $\PP^1_{\FF_p}=\PP^1_{\ZZ}\times_{\ZZ} \FF_p$ and $\PP^1_{\QQ} = \PP^1_{\ZZ}\times_{\ZZ} \QQ$ as subschemes of $\PP^1_\ZZ$: the \emph{special} and the \emph{generic} fibers, respectively. 
Write $\zeta=[a:b]\in \PP^1_{\QQ}$ and $\zeta_p=[\bar{a}:\bar{b}]\in \PP^1_{\FF_p}$, where the over-line denotes reduction modulo $p$. 
By assumption, there exists $f \in d_{\phi,\ZZ}$ such that $f(a,b) \not \equiv 0 \mod p$, which implies that
\[
\zeta_p \not\in \mathfrak{R}_{\phi}.
\]
So $\phi_\ZZ$ is \'etale at $\zeta_p$.
We base change with $\ZZ_p^{ur}$ to get the following diagram:
\[
\xymatrix{
  F_{\zeta}\ar[d]\ar[r] & \mathfrak{C}_{\ZZ^{ur}_p}\ar[d]^{\phi_{\ZZ^{ur}_p}}\ar[r]&\mathfrak{C}\ar[d]^{\phi_\ZZ}\\
  \Spec \ZZ^{ur}_p \ar[r]^{\zeta} & \PP^1_{\ZZ^{ur}_p}\ar[r]&\PP^1_{\ZZ}
 }
\]
Since $\phi_{\ZZ_p^{ur}}$ is \'etale in a neighborhood of $\zeta_p$,
the fiber   $F_{\zeta}$ is \'etale over $\Spec\ZZ_p^{ur}$. By Lemma~\ref{lem:hensel},
\[
F_{\zeta} \cong \Spec {(\ZZ_p^{ur})}^{\deg \phi};
\]
so
\[
\Spec({A^{\phi}_{\zeta}} \otimes_{\ZZ} \QQ_{p}^{ur}) = F_{\zeta} \times_{\Spec(\mathbb{Z}_p^{ur})} \Spec(\QQ_p^{ur}) = \Spec (\QQ_p^{ur})^{\deg \phi}.
\]
This implies that $A^{\phi}_{\zeta}$ is unramified at $p$, as needed. 
\end{proof}

\begin{lemma}\label{lem:open-unr}
Let $\phi\colon C\to \PP^1_\QQ$ be a branched covering, and let $S$ be a finite set of primes. Then there exists a nonempty $S$-adic open set $V_S$ of $\PP^1(\QQ)$ such that for every $\zeta\in V_S$ we have $\Ram(A^{\phi}_\zeta/\QQ)\cap S \subseteq U(\phi)$.
\end{lemma}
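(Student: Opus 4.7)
The plan is to build $V_S$ prime by prime: for each $p \in S$ I will construct a $p$-adic open set $W_p \subseteq \PP^1(\QQ_p)$ such that every $\zeta \in W_p \cap \PP^1(\QQ)$ has $A^\phi_\zeta$ unramified at $p$ whenever $p \notin U(\phi)$. Then I take $V_S$ to be the preimage of $\prod_{p \in S} W_p$ under the diagonal embedding $\PP^1(\QQ) \hookrightarrow \prod_{p \in S} \PP^1(\QQ_p)$. Weak approximation on $\PP^1$ then supplies a $\QQ$-point inside any non-empty product of $p$-adic opens, so $V_S \neq \emptyset$; and by construction every $\zeta \in V_S$ satisfies $\Ram(A^\phi_\zeta/\QQ) \cap S \subseteq U(\phi)$.

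For $p \in S \cap U(\phi)$ I take $W_p = \PP^1(\QQ_p)$. For $p \in S \setminus U(\phi)$, the definition of $U(\phi)$ furnishes some $\zeta_p \in \PP^1(\QQ)$ with $A^\phi_{\zeta_p}$ unramified at $p$, and I take $W_p$ to be a $p$-adic open neighborhood of $\zeta_p$ on which this unramifiedness persists. The principal obstacle is therefore the following openness claim: for any $\zeta_0 \in \PP^1(\QQ)$ with $A^\phi_{\zeta_0}$ unramified at $p$, unramifiedness at $p$ persists on a whole $p$-adic open neighborhood of $\zeta_0$ in $\PP^1(\QQ_p)$.

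To prove this openness, I base-change $\phi$ to $\QQ_p^{ur}$. Unramifiedness of $A^\phi_{\zeta_0}$ at $p$ is equivalent to the fiber $\phi_{\QQ_p^{ur}}^{-1}(\zeta_0)$ consisting of $n := \deg\phi$ distinct $\QQ_p^{ur}$-points $\eta_1, \ldots, \eta_n$. In particular $\zeta_0$ lies off the branch locus of $\phi_{\QQ_p^{ur}}$, so $\phi_{\QQ_p^{ur}}$ is étale at each $\eta_i$. By the $p$-adic implicit function theorem (equivalently, Hensel's lemma for étale morphisms over the henselian local ring $\ZZ_p^{ur}$), each $\eta_i$ admits a $p$-adic open neighborhood in $C(\QQ_p^{ur})$ mapping homeomorphically onto a $p$-adic open neighborhood $U_i$ of $\zeta_0$. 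For any $\zeta \in \bigcap_i U_i$, the fiber $\phi_{\QQ_p^{ur}}^{-1}(\zeta)$ contains at least $n$ distinct $\QQ_p^{ur}$-points, hence exactly $n$ by the degree count, and so $A^\phi_\zeta \otimes_\QQ \QQ_p^{ur} \cong (\QQ_p^{ur})^n$, giving unramifiedness of $A^\phi_\zeta$ at $p$. A more hands-on variant, in the same spirit, would apply Krasner's lemma to a local polynomial cutting out $C$ over an affine chart of $\PP^1_\QQ$ near $\zeta_0$, deducing the $p$-adic stability of its splitting behavior over $\QQ_p^{ur}$.
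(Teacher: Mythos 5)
Your proof is correct and follows essentially the same route as the paper's: reduce to a single prime (the paper invokes the Chinese Remainder Theorem to combine the $S_i$-adic opens, you invoke weak approximation on $\prod_{p\in S}\PP^1(\QQ_p)$ — these are the same thing), handle $p\in U(\phi)$ trivially, and for $p\notin U(\phi)$ use the inverse/implicit function theorem over the henselian field $\QQ_p^{ur}$ to show that having $\deg\phi$ distinct $\QQ_p^{ur}$-points in the fiber is a $p$-adically open condition. The only cosmetic difference is that you spell out the étale local sections near the $\eta_i$ while the paper cites the inverse function theorem directly; the substance is identical.
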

\begin{proof}
By the Chinese Reminder Theorem, if $S_1\cap S_2=\emptyset$ and if $V_{S_i}$ is a  nonempty open $S_i$-adic open set, $i=1,2$, then $V_{S_1}\cap V_{S_2}$ is a nonempty $S_1\cup S_2$-adic  set.  Thus it suffices to consider the case where $S = \{p\}$; i.e., $S$ contains only one prime. 

If $p \in U(\phi)$, then the assertion is trivial. Otherwise, there exists $\zeta \in \PP^1(\QQ)$ such that $A^{\phi}_{\zeta}$ is unramified at $p$. In particular,   $A^{\phi}_\zeta$ is reduced, so $\zeta \not \in \RamG(\phi)$.
Consider the map $$\phi_{\QQ_p^{ur}}\colon C(\QQ_p^{ur}) \to \PP^1(\QQ_p^{ur}).$$ Since $\zeta$ is not a branch point, $\#\phi_{\QQ_p^{ur}}^{-1}(\zeta) (\QQ_p^{ur})= \deg \phi$, and so as $\QQ_p^{ur}$ is Henselian, by the inverse function theorem (see e.g.\ \cite[Corollary 9.5]{GPR}) there exists some  $p$-adic neighbourhood  $V$ of $\zeta$ such that for every $\zeta' \in V$
\[
 \# \phi_{\QQ_p^{ur}}^{-1}(\zeta') (\QQ_p^{ur}) = \#\phi_{\QQ_p^{ur}}^{-1}(\zeta) (\QQ_p^{ur})= \deg \phi .
\]
The proof is done with $V_S=V\cap\PP^1(\QQ)$.
\end{proof}

\section{Proof of Theorem~\ref{thm:proj}}\label{ssec:prf_prj}
Let 
$I=d_{\phi,\ZZ} \lhd \ZZ[t,s]$. Since $D\QQ[t,s] = I\QQ[t,s]$, by Lemma~\ref{lem:finitesetofprimes_proj} there exists a finite set of primes $S_1$ such that for all $p\not\in S_1$ and for all $[a:b]\in \PP^1(\QQ)$ we have
\begin{equation}\label{eq:pj-samevalue}
v_p(I(a,b)) = v_p(D(a,b)).
\end{equation}
Let $T_\phi=
S_1\cup U \cup \{\infty\}$ and let $S$ be a finite set of primes containing $T_\phi$.
By Lemma~\ref{lem:open-unr}, there exists a nonempty $S$-adic open set $V_S$ of $\PP^1(\QQ)$ such that for all $\zeta\in V_S$ we have $\Ram(A^{\phi}_\zeta/\QQ) \cap S\subseteq U$, so $\Ram(A^{\phi}_\zeta/\QQ) \cap S=U$.

Let $\zeta =[a:b] \in V$, let $p\not\in S$, hence $p\not\in S_1$, and assume that $p\nmid D(a,b) $.
 By \eqref{eq:pj-samevalue}, we have $v_p(I(a,b)) = v_p(D(a,b)) =0$, so $p$ is prime to $I(a,b)$.
This implies, by Proposition~\ref{prop:disc_ram}, that $p \not \in \Ram(A^{\phi}_\zeta 
/\QQ)$ \qed.

\section{Prime Values of Polynomials}
\label{sec:Schinzel}
The goal of this section is to formally deduce \eqref{eq:B-gen-bd} and \eqref{eq:B-GTZ-bd}
from sieve theoretical results and \eqref{eq:B-conj-bd} conditionally on 
Schinzel Hypothesis H.

\subsection{Local Obstructions}
Since many of the results in this theory are stated in the literature for univariate polynomials we first deals with those, and then move to bivariate homogeneous polynomials.

We say that $f(x)\in \ZZ[x]$ has a \emph{local obstruction at $p$} if $p$ divides $f(n)$ for all $n\in \ZZ$. We denote the set of  primes at which there is a local obstruction by $O_f$.

\begin{lemma}\label{lem:obstructionimpliessmallprime}
If $f$ is primitive (i.e.\ the greatest common divisor of its coefficients is $1$), then $p\leq \deg f$ for all $p\in O_f$.
\end{lemma}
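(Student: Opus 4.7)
The plan is to reduce modulo $p$ and apply the fact that a nonzero polynomial over the field $\mathbb{F}_p$ has at most as many roots as its degree.

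First I would fix a prime $p \in O_f$ and consider the reduction $\bar f(x) \in \mathbb{F}_p[x]$, which has degree at most $d := \deg f$. The hypothesis $p \in O_f$ means $p \mid f(n)$ for every $n \in \mathbb{Z}$, equivalently $\bar f(\bar n) = 0$ in $\mathbb{F}_p$ for every residue class $\bar n \in \mathbb{F}_p$. Thus $\bar f$ has at least $p$ roots in the field $\mathbb{F}_p$.

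Next I would argue by contradiction: suppose $p > d$. Then $\bar f$ is a polynomial of degree at most $d < p$ over a field, with at least $p > d$ roots, so $\bar f$ must be the zero polynomial in $\mathbb{F}_p[x]$. This means $p$ divides every coefficient of $f$, contradicting the primitivity assumption that $\gcd$ of the coefficients is $1$. Therefore $p \leq d = \deg f$, as claimed.

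There is no real obstacle here; the only subtle point is remembering that we should not compare with $\deg \bar f$ (which could drop if $p$ divides the leading coefficient) but rather with $\deg f$, which bounds $\deg \bar f$ from above and is what the conclusion asserts. Primitivity is used precisely to rule out the degenerate case $\bar f \equiv 0$, so the hypothesis cannot be dropped.
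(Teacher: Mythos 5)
Your proof is correct and uses essentially the same argument as the paper: reduce $f$ modulo $p$, observe that primitivity forces $\bar f \neq 0$ in $\mathbb{F}_p[x]$, and compare the number of roots (all $p$ residues) with $\deg f$. The paper states it directly while you phrase it as a contradiction, but the content is identical.
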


\begin{proof}
By assumption $f\mod p\in \FF_p[x]$ is not the zero polynomial, hence has at most $\deg f$ roots modulo $p$.
\end{proof}

\begin{definition}\label{B_0(d)}
Let $d_1, \ldots, d_r$ be positive integers. Define 
\[
B_0=B_0(d_1, \ldots, d_r)
\] 
to be the minimum positive integer $B_0$ such that for every $f=f_1\cdots f_r$, with $f_i(x)\in \ZZ[x]$ irreducible of degree $d_i$, with positive leading coefficient, and with $O_{f}=\emptyset$ there exist infinitely many $n>0$ such that $\#\Prms(f(n))\leq B_0$.
\end{definition}

Sieve methods are effective in bounding $B_0$ in
 terms of $r$ and $d=\sum_{i=1}^r d_i$: By the beta-sieve, \cite[Theorem 10.11]{HalberstamRichert} we have 
\begin{equation}\label{eq:B-gen-bd-}
B_0(d_1,\ldots, d_r) \leq b
\end{equation}
for every
\[
b> d -1 + r\sum_{j=1}^r\frac{1}{j} + r \log\left( \frac{2d}{r} + \frac{1}{r+1} \right).
\]
Schinzel Hypothesis H is a more precise conjecture that says that
\[
B_0(d_1, \ldots, d_r) \leq r.
\]
(Note that one cannot do better.)
Hence to obtain \eqref{eq:B-gen-bd} and \eqref{eq:B-conj-bd} it suffices to prove that
\begin{equation}\label{eq:BvsB_0}
B(\bfd) \leq B_0(\bfd),
\end{equation}
which we now pursue. First we remove the restriction of the having no local obstructions:

\begin{lemma}\label{lem:Schinzel_local_obstruction}
Let $f_1, \ldots, f_r\in \ZZ[x]$ be irreducible polynomials of positive leading coefficients and of respective degrees $d_1, \ldots, d_r$, $f=f_1\cdots f_r$, and  $S$ a finite  set of primes such that $f$ has no local obstructions outside of $S$. Then, there exists infinitely many $n$ such that $\#\Prms_S(f(n))\leq B_0(d_1,\ldots, d_r)$.
\end{lemma}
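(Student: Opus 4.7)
The idea is to replace $f$, via a substitution $x \mapsto n_0 + Mx$ and division by a constant supported on $S$, by a new polynomial $h=h_1\cdots h_r\in \ZZ[x]$ to which Definition~\ref{B_0(d)} applies directly, and then to transfer the resulting bound back to $f$. The technical heart is to choose $M$ divisible by a large enough power of each $p\in S$ so that the ``fixed divisor at $p$'' of the substituted polynomial actually coincides with its content in $\ZZ[x]$, which is not automatic (the polynomial $x(x-1)\in \ZZ[x]$ has content $1$ while $2$ divides all its values).

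Fix any $n_0\in \ZZ$ with $f(n_0)\neq 0$, and set $E_{i,p} := v_p(f_i(n_0))\ge 0$. For each $p\in S$ let $K_p := 1 + \max_i E_{i,p}$ and put
\[
M := \prod_{p\in S} p^{K_p},\qquad c_i := \prod_{p\in S} p^{E_{i,p}},\qquad c:=\prod_{i=1}^{r} c_i.
\]
The Taylor expansion $f_i(n_0+Mx)=\sum_{j=0}^{d_i} M^j\,\frac{f_i^{(j)}(n_0)}{j!}\,x^j$ has integer coefficients, since $f_i^{(j)}(x)/j!\in \ZZ[x]$ whenever $f_i\in\ZZ[x]$. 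For $p\in S$ and $j\ge 1$ the $j$-th coefficient has $p$-valuation at least $jK_p>E_{i,p}=v_p(f_i(n_0))$, so the $p$-content of $f_i(n_0+Mx)$ equals $p^{E_{i,p}}$. Consequently
\[
h_i(x) \;:=\; f_i(n_0+Mx)/c_i \;\in\; \ZZ[x]
\]
is a polynomial of degree $d_i$ with positive leading coefficient, and it remains irreducible over $\QQ$ because linear substitution is an automorphism of $\QQ[x]$ and division by a nonzero rational constant preserves irreducibility.

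Next I would verify that $h:=h_1\cdots h_r$ satisfies $O_h=\emptyset$. For $p\in S$: the same Taylor estimate (all terms with $j\ge 1$ having $p$-valuation $>E_{i,p}$) yields $v_p(f_i(n_0+Mk))=E_{i,p}$ for every $k\in \ZZ$, hence $v_p(h_i(k))=0$ and $p\nmid h(k)$. For $p\notin S$: $\gcd(M,p)=1$, so the residues $n_0+Mk\pmod p$ cover $\FF_p$; since $p\notin O_f$ by hypothesis, some $k$ gives $p\nmid f(n_0+Mk)=c\,h(k)$, and because $p\nmid c$ we obtain $p\nmid h(k)$.

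Since $h$ satisfies all the hypotheses of Definition~\ref{B_0(d)}, there exist infinitely many positive integers $k$ with $\#\Prms(h(k))\le B_0(d_1,\ldots,d_r)$. For each such $k$, set $n := n_0+Mk$; then $f(n)=c\,h(k)$ with all prime divisors of $c$ lying in $S$, whence
\[
\Prms_S(f(n)) \;=\; \Prms_S(h(k)) \;\subseteq\; \Prms(h(k)),
\]
and therefore $\#\Prms_S(f(n))\le B_0(d_1,\ldots,d_r)$. Taking $k$ sufficiently large ensures $n>0$, producing infinitely many admissible $n$, as required. The only delicate point, as mentioned, is the content computation in the second paragraph; once the bound $v_p(M)>\max_i v_p(f_i(n_0))$ is in force for each $p\in S$, the remainder of the argument is a routine invocation of the definition of $B_0$.
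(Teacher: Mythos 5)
Your proof is correct and follows essentially the same strategy as the paper: shift by a well-chosen constant, scale by $M$ supported on $S$ so that every residue class mod $p\notin S$ is still hit, and divide out the $S$-part of the content so that Definition~\ref{B_0(d)} applies. The paper sets $N=\prod_{p\in S}p^{\alpha_p}$ where $\alpha_p$ is the exact fixed divisor of $f$ at $p$, picks $a$ (via CRT) so that $v_p(f(a))=\alpha_p$ exactly, and divides $f(Ny+a)$ by $N$ in one go, noting briefly that the result "has the same factorization type as $f$." Your version instead fixes an arbitrary $n_0$ with $f(n_0)\neq 0$, takes $M$ with $v_p(M)$ strictly larger than $\max_i v_p(f_i(n_0))$, and divides \emph{each factor} $f_i(n_0+Mx)$ by its own $p$-part of the constant term; the Taylor/valuation argument then makes it transparent that each $h_i$ is a primitive, irreducible integer polynomial of degree $d_i$ with positive leading coefficient, and that $O_h=\emptyset$. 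So the two arguments differ only in bookkeeping: the paper normalizes the fixed divisor globally and leaves the per-factor irreducibility as a one-line remark, while you handle each factor separately and make that step explicit. Both are valid; yours spells out a detail the paper elides.
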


\begin{proof}
For each $p\in S$ let $\alpha_p$ be the maximal non-negative integer such that the function $n\mapsto f(n)\mod p^{\alpha_p}$ is the zero function. 
Put $N=\prod_{p} p^{\alpha_p}$ and choose an integer $a_p$ such that $f(a_p)\not \equiv 0 \mod{p^{\alpha_p+1}}$.  
By the Chinese Reminder Theorem, we have an integer $a$ with $a\equiv a_p\pmod{p^{\alpha_p+1}}$ for all $p\in S$ and let  $g(y) = \frac{f(N y + a)}{N}$.

We claim that $g(y)$ is an integral polynomial with no local obstructions. Indeed, 
since $(x-a)$ divides $f(x)-f(a)$ in $\ZZ[x]$ we get, by substitution $x=Ny+a$, that $Ny$ divides $f(Ny+a)-f(a)$ in $\ZZ[y]$. Since $N\mid f(a)$, $N$ divides the coefficients of $f(Ny+a)=(f(Ny+a)-f(a))+f(a)$, so $g(y)\in \ZZ[y]$. To show that $g(y)$ has no local obstruction at a prime $p$, we note that if $p\in S$, then $g(0) \not \equiv 0 \mod p$ 
and if $p\not\in S$, then $f$ does not have local obstruction at $p$, hence there exists $m$ with $f(m)\not\equiv 0\pmod{p}$, and since $p\nmid N$, there is $n$ such that $m\equiv Nn+a \pmod p$, hence $g(n)\not\equiv 0\pmod{p}$.

Next we apply the definition of $B_0=B_0(d_1, \ldots, d_r)$ to $g$ (which has the same factorization type as $f$) and the trivial observation that $\Prms_{S}(f(Nn+a))= \Prms_S(g(n))$ to conclude that for infinitely many $n$ we have
\[
\#\Prms_{S}(f(Nn+a))\leq \#\Prms(g(n))\leq B_0,
\]
as needed.
\end{proof}

Let $N$ be a positive integer and $S:= \Prms(N) \cup \{\infty\}$  we define  $V_N$ to be the following $S$-adic neighborhood of $[1:0] \in \PP^1(\QQ)$:
\begin{equation}\label{eq:standV_N}
V_N := \left\{ [a:bN]\in \PP^1(\QQ) : a,b\in \ZZ \mbox{ and } \left|\frac{bN}{a}\right|\leq \frac{1}{N}\right\}.
\end{equation}
Note that by our notational agreement, $\gcd(a,bN)=1$.

\begin{lemma}\label{lem:uni-to-bi}
For every $D=D_1\cdots D_r$ with $D_1, \ldots, D_r\in \ZZ[t,s]$  homogeneous irreducible polynomials of respective positive degrees $d_1, \ldots, d_r$, there exists a finite set of primes $S_0=S_0(d_1, \ldots, d_r)$ depending only on $d_1,\ldots, d_r$ such that for every positive integer $N$ there exists $[a:b]\in V_N$ such that
\[
\#\Prms_{S}(D(a,b))\leq B_0(d_1,\ldots, d_r),\qquad S=S_0\cup\Prms(N).
\] 
\end{lemma}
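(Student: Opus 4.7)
I would reduce to the univariate sieve by dehomogenizing along the slice $b=1$: for large integers $a$ with $\gcd(a,N)=1$ and $|a|\ge N^2$, the point $[a:N]$ lies in $V_N$, and one studies the univariate polynomial $f(x):=D(x,N)\in \ZZ[x]$ together with its factorization $f=f_1\cdots f_r$, $f_i(x):=D_i(x,N)$. The statement then reduces to producing such an $a$ with $\#\Prms_S(f(a))\le B_0(d_1,\dots,d_r)$, which calls for Lemma~\ref{lem:Schinzel_local_obstruction}.

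I would first verify the structural properties of the $f_i$'s. Each $D_i$ being irreducible in $\ZZ[t,s]$ is automatically primitive (as the units of $\ZZ[t,s]$ are $\pm 1$). Provided $D_i\neq\pm s$, Gauss's lemma applied to the primitive irreducible $D_i(x,1)\in \ZZ[x]$ forces $f_i$ to be irreducible in $\ZZ[x]$ of degree $d_i$, with positive leading coefficient after a harmless sign change; the exceptional factors $D_i=\pm s$ yield $f_i=\pm N$ and may be discarded since their prime divisors lie in $\Prms(N)\subseteq S$. Next I would bound the local obstructions of $f$: for $p\nmid N$, primitivity of each $D_i$ ensures that at least one coefficient $e_jN^{d_i-j}$ of $f_i$ is a unit mod $p$, so $f_i\bmod p\neq 0$ and thus $f\bmod p$ is a nonzero polynomial in $\FF_p[x]$ of degree at most $d:=\sum_id_i$. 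If in addition $p>d$ then $f\bmod p$ has fewer than $p$ roots in $\FF_p$ and cannot be a local obstruction. Setting $S_0:=\{\infty\}\cup\{p\le d+1\}$, which depends only on $d_1,\dots,d_r$, confines every local obstruction of $f$ to $S:=S_0\cup \Prms(N)$, and Lemma~\ref{lem:Schinzel_local_obstruction} produces infinitely many $a\in\ZZ$, lying in a single arithmetic progression $a\equiv a_0\pmod M$, with $\#\Prms_S(f(a))\le B_0(d_1,\dots,d_r)$.

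The final and only delicate step is to arrange simultaneously that the sieved $a$ satisfies $\gcd(a,N)=1$ (so that $[a:N]\in V_N$) and $|a|\ge N^2$. The size condition is immediate. For coprimality, I would refine the CRT step in the proof of Lemma~\ref{lem:Schinzel_local_obstruction}: at each prime $p\mid N$, choose the local residue $a_p\pmod{p^{\alpha_p+1}}$ so that both $f(a_p)\not\equiv 0\pmod{p^{\alpha_p+1}}$ and $a_p\not\equiv 0\pmod p$. In $\FF_p$ these two conditions together exclude at most $d+1$ residues, leaving an admissible choice whenever $p\ge d+2$; for the finitely many primes $p\le d+1$ (all already absorbed into $S_0$) I would instead enlarge the local exponent to $\alpha_p^{(1)}:=\min\{v_p(f(a)):a\not\equiv 0\pmod p\}$ and check that the proof of Lemma~\ref{lem:Schinzel_local_obstruction} carries over verbatim with $\alpha_p^{(1)}$ in place of $\alpha_p$, which forces $a\not\equiv 0\pmod p$ at the harmless cost of absorbing additional factors of $p\in S$. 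This joint CRT-plus-coprimality bookkeeping at small primes dividing $N$ is the main technical obstacle of the proof; the rest of the argument is a routine primitivity-and-degree count.
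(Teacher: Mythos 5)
Your overall strategy matches the paper's: dehomogenize $D$ along a $\QQ$-rational slice, reduce to the univariate local-obstruction setting, and invoke Lemma~\ref{lem:Schinzel_local_obstruction}. But there is a genuine gap in the coprimality step that you correctly flag as the delicate point, and the patch you propose does not close it.

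The issue is that the integers produced by Lemma~\ref{lem:Schinzel_local_obstruction} lie in a single arithmetic progression $\{N'y+a_0 : y\in\ZZ\}$ whose modulus is $N'=\prod_p p^{\alpha_p}$ (resp.\ $\prod_p p^{\alpha_p^{(1)}}$ with your modification). If $p\mid N$ but $\alpha_p=0$ (resp.\ $\alpha_p^{(1)}=0$), then $p\nmid N'$, so $N'y+a_0$ runs over \emph{all} residues mod $p$ as $y$ varies; the constraint $a_0\equiv a_p\not\equiv 0\pmod p$ imposed in the CRT step is simply not inherited by $N'y+a_0$. This case is not exotic: already $D=t$ gives $f(x)=D(x,N)=x$ and $\alpha_p=\alpha_p^{(1)}=0$ for every $p\mid N$. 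More generally, for $p\mid N$ one has $f(x)\equiv e_0 x^d\pmod p$, so $\alpha_p^{(1)}=0$ whenever $p\nmid e_0$. Thus the claim that the proof of Lemma~\ref{lem:Schinzel_local_obstruction} ``carries over verbatim'' and ``forces $a\not\equiv 0\pmod p$'' is false as stated. One could repair this by taking the change of variable $y\mapsto N''y+a_0$ with $N''=N'\cdot\prod_{p\mid N,\,p\nmid N'} p$ while still dividing only by $N'$ to form $g$ — but that is no longer the lemma verbatim and requires re-checking integrality and the absence of local obstructions for $g$ at the primes one just adjoined. The paper sidesteps all of this cleanly by parameterizing the points of $V_N$ as $[1+nN:N]$; since $\gcd(1+nN,N)=1$ identically, coprimality never becomes a constraint on the sieve, and the argument reduces exactly to Lemma~\ref{lem:Schinzel_local_obstruction} applied to $g_i(x)=D_i(1+xN,N)$.

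Two smaller points. First, your claim ``Gauss's lemma $\ldots$ forces $f_i=D_i(x,N)$ to be irreducible in $\ZZ[x]$'' overlooks that $D_i(x,N)$ can have content $>1$ (e.g.\ $D_i(t,s)=2t+s$, $N=2$); as in the paper, one must factor out a constant $c_i$ with $\Prms(c_i)\subseteq S$ and apply the sieve to the primitive part. Second, your count ``exclude at most $d+1$ residues'' for large $p\mid N$ is correct but not immediate when $\alpha_p\ge 1$: one first needs to observe that for $p>d$ one has $\alpha_p=v_p(\mathrm{content}(f))$, so that $f/p^{\alpha_p}\in\ZZ[x]$ and the condition $f(a_p)\not\equiv 0\pmod{p^{\alpha_p+1}}$ becomes a nonvanishing condition mod $p$ on a polynomial of degree $\le d$.
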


\begin{proof}
Let $S_0$ be the set of all primes $p$ such that $p\leq \deg D$.
If $p\nmid N$, then
\[
\{[1+xN :N] \in \PP^1(\FF_p) \mid x\in \FF_p\}=\AA^{1}(\FF_p).
\]
Thus if $D(1+xN,N) \equiv 0 \pmod p$ for all $x$, then $p\in S_0$ by Lemma~\ref{lem:obstructionimpliessmallprime} (note that $D$ is primitive as  the  product of irreducible polynomials in $\ZZ[t,s]$). Therefore for $p\not\in S$, the function $n\mapsto D(1+nN,N) \pmod p$ is nonzero.

Denote $g_i(x) = D_i(1+xN,N)$. If $D_i(t,s)\neq s$, then $g_i$ is an irreducible polynomial of degree $d_i$ in $\QQ[x]$. Moreover, we may write $g_i(x) = c_i f_i(x)$, where $c_i\in \ZZ$ and $f_i(x)\in\ZZ[x] $ is irreducible. By the above $\Prms(c_i) \subseteq S$. If $D_i(t,s) = s$, we denote $f_i(x)=x$.

Now $f_1, \ldots, f_r$ are irreducible in $\ZZ[x]$, $f=f_1\ldots f_r$ has no local obstruction outside of $S$, and $\deg f_i = \deg D_i$. By Lemma~\ref{lem:Schinzel_local_obstruction}, there exists $n\geq N$ such that $\#\Prms_S(f(n)) \leq B_0(d_1, \ldots, d_r)$. This finishes the proof since $\frac{N}{1+nN}<\frac{1}{N}$, so $[1+nN:N]\in V_N$.
\end{proof}

Note that $\GL_2(\ZZ)$ acts transitively on $\PP^1(\QQ)$ by
\begin{equation}\label{actionGL}
\left(\begin{matrix}x_1&x_2\\ y_1&y_2\end{matrix}\right)[a:b]=[x_1a+x_2b : y_1a+y_2b].
\end{equation}

\begin{lemma}\label{lem:transitiveGL2}
Let $S$ be a finite set of primes containing the infinite prime and let $V_S$ be a nonempty $S$-adic neighbourhood. Then, there exist a positive integer $N$ and a matrix $g\in \GL_2(\ZZ)$ such that $gV_N\subseteq V_S$.
\end{lemma}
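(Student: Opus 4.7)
The plan is to use the transitivity of the $\GL_2(\ZZ)$-action to reduce the problem to one about neighbourhoods of the single point $[1:0]$, and then to choose $N$ so that $V_N$ fits inside a standard basic neighbourhood.

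First, I pick any point $[a_0:b_0] \in V_S$ (with $\gcd(a_0,b_0)=1$). By Bezout's identity there exist $u,v \in \ZZ$ with $a_0 u + b_0 v = 1$, so the matrix
\[
g = \begin{pmatrix} a_0 & -v \\ b_0 & u \end{pmatrix} \in \SL_2(\ZZ)
\]
satisfies $g[1:0] = [a_0:b_0]$ under the action \eqref{actionGL}. Because $\det g = 1$ is a unit in every $\ZZ_p$, the matrix $g$ lies in $\GL_2(\ZZ_p)$ and so acts by a homeomorphism on $\PP^1(\QQ_p)$ for every finite $p$; it also acts continuously on $\PP^1(\RR)$ as a Möbius transformation. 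Therefore $g^{-1}V_S$ is a nonempty $S$-adic open neighbourhood of $[1:0]$.

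Second, I shrink $g^{-1}V_S$ to a product-type neighbourhood of $[1:0]$. In the affine chart $\{[1:y] : y\in \QQ\}$ around $[1:0]$, continuity of the diagonal embedding gives an $\epsilon>0$ and non-negative integers $k_p$ (for each finite $p\in S$) such that the basic $S$-adic neighbourhood
\[
W = \Bigl\{[1:y] : y\in\QQ,\ |y|<\epsilon,\ v_p(y)\geq k_p \text{ for all } p\in S\smallsetminus\{\infty\}\Bigr\}
\]
is contained in $g^{-1}V_S$.

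Third, I choose $N$ to satisfy $V_N \subseteq W$. Take $N$ to be any positive integer such that $v_p(N) \geq k_p$ for each finite $p\in S$ and such that $N > 1/\epsilon$; for instance $N = \lceil 1/\epsilon \rceil \cdot \prod_{p\in S\smallsetminus\{\infty\}} p^{k_p}$ works. Given $[a:bN]\in V_N$, the coprimality convention forces $a$ to be coprime to $N$, hence $a\ne 0$ and we can write $[a:bN]=[1:y]$ with $y=bN/a$. Then $|y|\leq 1/N < \epsilon$, and for each finite $p\in S$ one has $v_p(a)=0$, so $v_p(y) = v_p(bN) \geq v_p(N) \geq k_p$. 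Thus $[a:bN]\in W$, giving $V_N \subseteq W \subseteq g^{-1}V_S$, i.e.\ $gV_N \subseteq V_S$.

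The proof is essentially bookkeeping; the only point that requires care is the verification that the $\GL_2(\ZZ)$-action is continuous in every $p$-adic and real topology and that basic neighbourhoods of $[1:0]$ in the $S$-adic topology indeed have the product description used above. Both facts are standard but must be invoked explicitly to match the framework of Definition~\ref{def:B}.
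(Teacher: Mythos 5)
Your approach is exactly the paper's (which is stated in three lines): use transitivity of $\GL_2(\ZZ)$ to pull $V_S$ back to a neighbourhood of $[1:0]$, then find $N$ with $V_N$ inside it. You fill in the details correctly, but there are two small points that need attention.

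First, a genuine (if easily repaired) gap: the condition $v_p(N)\geq k_p$ for finite $p\in S$ does not suffice. If $k_p=0$, your $N$ may fail to be divisible by $p$, and then $\gcd(a,bN)=1$ does \emph{not} force $v_p(a)=0$; one can have $p\mid a$ with $[a:bN]\in V_N$, giving $v_p(y)=-v_p(a)<0=k_p$, so $[a:bN]\notin W$. The fix is to require $v_p(N)\geq\max(k_p,1)$ for all finite $p\in S$ (equivalently, to assume without loss of generality that every $k_p\geq 1$, since shrinking $W$ is harmless). Second, note that in the paper's proof of \eqref{eq:BvsB_0}, this lemma is invoked with the extra conclusion $\Prms(N)\subseteq S$, which the paper's own proof arranges even though the lemma statement omits it. Your explicit choice $N=\lceil 1/\epsilon\rceil\cdot\prod_{p\in S\smallsetminus\{\infty\}}p^{k_p}$ may introduce new prime factors via $\lceil 1/\epsilon\rceil$. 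To match the application you should instead take $N=\prod_{p\in S\smallsetminus\{\infty\}}p^{e_p}$ with $e_p\geq\max(k_p,1)$ large enough that $N>1/\epsilon$ (this is possible whenever $S$ contains a finite prime, which holds in the relevant application since $S\supseteq S_0$). With these two adjustments your proof is correct and is essentially identical to the paper's.
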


\begin{proof}
Let $[a:b]\in V_S$ and choose $g\in \GL_n(\ZZ)$ such that $g[1:0]=[a:b]$. Then $g^{-1}(V_S)$ is a neighbourhood of $[1:0]$. Hence there exists $N$ with $\Prms(N)\subseteq S$ such that $V_N\subseteq g^{-1}V_S$, so $gV_N \subseteq V_S$.
\end{proof}

\begin{proof}[Proof of~\eqref{eq:BvsB_0}] Let $D_1,\ldots, D_r\in \ZZ[t,s]$ be non-associate irreducible homogeneous polynomials of positive degrees $d_1,\ldots, d_r$. Let $S_0=S_0(d_1, \ldots , d_r)$ be as in Lemma~\ref{lem:uni-to-bi}.  Let $S$ be a finite set of primes containing $S_0$ and $V_S$ a nonempty $S$-adic neighbourhood. By Lemma~\ref{lem:transitiveGL2}, there exists $N$ with $\Prms(N)\subseteq S$ and $g\in \GL_2(\ZZ)$ such that $gV_N \subseteq V_S$. We let $D'_i = D_i\circ g$ and $D'=D_1'\cdots D_r'$. Then each $D'_i$ is irreducible of degree $d_i$.
By Lemma~\ref{lem:uni-to-bi}, there exists $[a':b']\in V_N$ with $\#\Prms_S (D'(a',b'))\leq B_0(d_1, \ldots, d_r)$ (note that $S=S\cup \Prms(N)$). Therefore, for $[a:b]=g[a':b']$ we get that  $\#\Prms_S(D(a,b))\leq B_0(d_1, \ldots, d_r)$, which proves~\eqref{eq:BvsB_0} by the definition of $B$.
\end{proof}

Equation~\eqref{eq:B-GTZ-bd} immediately follows from the following form of \cite[Corollary~1.9]{GTZ} (which essentially appears in 
Proposition~\cite[Proposition~1.2]{HSW}).
\begin{proposition}
Let $L_i(s,t)=\beta_i t-\alpha_i s$ be distinct primitive integral linear forms, $i=1,
\ldots, r$. Let $S$ be a finite set of primes containing all primes $p\leq r$ and let $V_S$ be a nonempty $S$-adic neighbourhood. Then there exists $[a:b]\in V_S$ such that for all $i=1,\ldots, r$ the value $L_i(a,b)$ is either 
a prime or a unit
 in $\ZZ[S^{-1}]$.
\end{proposition}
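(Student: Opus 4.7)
The plan is to reduce to Green--Tao--Ziegler by translating the $S$-adic constraint on $[a:b]$ into a congruence modulo an integer $N$ and then stripping the deterministic $S$-part off each $L_i$ so that what is left is a system of affine-linear forms on $\ZZ^2$ to which \cite[Corollary~1.9]{GTZ} applies directly. First I would fix a base point $[a_0:b_0]\in V_S$ with $\gcd(a_0,b_0)=1$ and choose $N=\prod_{p\in S\setminus\{\infty\}}p^{e_p}$ whose exponents are large enough that (i) every $[a:b]\in\PP^1(\QQ)$ with $a\equiv a_0$, $b\equiv b_0\pmod N$ and $[a:b]$ archimedeanly close to $[a_0:b_0]$ already lies in $V_S$, and (ii) $e_p>v_p(L_i(a_0,b_0))$ for every $i$ and every finite $p\in S$. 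I parametrize the congruence class by $a=a_0+Nm$, $b=b_0+Nn$.

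Setting $s_i=\prod_{p\in S\setminus\{\infty\}}p^{v_p(L_i(a_0,b_0))}$ and $u_i=L_i(a_0,b_0)/s_i$, condition (ii) ensures $u_i$ is an integer coprime to every $p\in S$ and that $v_p(L_i(a,b))=v_p(s_i)$ identically on the congruence class. Hence
\[
M_i(m,n):=\frac{L_i(a_0+Nm,b_0+Nn)}{s_i}=u_i+\tfrac{N}{s_i}L_i(m,n)
\]
is an integer-valued affine-linear form whose values are automatically units modulo every $p\in S$. This is the key reduction: the problem of making $L_i(a,b)$ prime-or-$S$-unit is now equivalent to making the $M_i(m,n)$ simultaneously prime.

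Next I would verify the Green--Tao--Ziegler hypotheses for $(M_1,\dots,M_r)$. Pairwise affine non-proportionality is immediate since the linear parts $(N/s_i)L_i$ are non-proportional (the $L_i$ are primitive and distinct). Local admissibility holds at every prime: modulo any $p\in S$ each $M_i$ reduces to the nonzero unit $u_i$, while modulo any prime $p\notin S$ the hypothesis on $S$ gives $p>r$, so the $r$ affine lines $\{M_i\equiv 0\bmod p\}\subset\FF_p^2$ cannot cover all $p^2$ points. Restricting $(m,n)$ to an integer cone whose image under $(m,n)\mapsto[a_0+Nm:b_0+Nn]$ lies in the archimedean part of $V_S$ and on which each $M_i$ has a fixed sign, \cite[Corollary~1.9]{GTZ} produces a positive density of $(m,n)$ in the cone for which every $M_i(m,n)$ is prime.

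It remains to secure $\gcd(a,b)=1$, required by the paper's convention for $[a:b]\in V_S$. Any common divisor $d=\gcd(a_0+Nm,b_0+Nn)$ is coprime to $N$ (reducing modulo $p\in S$ gives $\gcd(a_0,b_0)=1$), hence coprime to each $s_i$, and thus divides every prime $M_i(m,n)$; either $d=1$, or $d=M_i(m,n)$ for every $i$, which forces the coincidences $M_i(m,n)=M_j(m,n)$. Each such coincidence cuts out a proper affine line in $\ZZ^2$, so the union of bad $(m,n)$ has density zero and we may choose GTZ-admissible $(m,n)$ off it. For such $(m,n)$ the point $[a:b]$ lies in $V_S$ and $L_i(a,b)=s_iM_i(m,n)$ is an $S$-unit times a prime outside $S$, hence a prime in $\ZZ[S^{-1}]$ (or a unit, in the degenerate case $M_i(m,n)=\pm1$). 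The most delicate part I expect to be the admissibility check at primes in $S$: it hinges on choosing the exponents $e_p$ large enough to absorb the full $S$-part of every $L_i(a_0,b_0)$ into $s_i$, so that only units remain in the $M_i$ after reduction; once this is arranged, the rest is a straightforward application of GTZ together with the lattice-point coprimality argument.
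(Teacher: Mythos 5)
Your proposal is correct and follows the same overall strategy as the paper: strip the deterministic $S$-part off each $L_i$ by a change of variables so that the resulting forms have no local obstructions, then invoke the Green--Tao theorem on linear equations in primes. The packaging differs slightly. The paper first reduces an arbitrary $V_S$ to the standard neighbourhood $V_N$ of $[1:0]$ via Lemma~\ref{lem:transitiveGL2} (transitivity of $\GL_2(\ZZ)$), and then rescales the second coordinate by $CN$ and divides by $c_i=\gcd(\beta_i,C)$; you instead fix a base point $[a_0:b_0]\in V_S$ and translate, parametrising by $a=a_0+Nm$, $b=b_0+Nn$. Both devices serve the same purpose. Your treatment of coprimality by showing the ``bad'' set $\{M_i(m,n)=M_j(m,n)\}$ is a union of lines of density zero is a workable alternative to the paper's cleaner observation that after dividing by $\gamma=\gcd(a,NCb)$ each value still divides a prime of $\ZZ[S^{-1}]$ and hence is a prime or a unit there, with no need to force $\gamma=1$; note that your density argument does not apply when $r=1$ (there are no coincidences to force), but the paper disposes of $r=1$ separately via Dirichlet, and in fact the paper's ``prime or unit'' reading makes the issue moot. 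Two small omissions worth flagging: you must choose $[a_0:b_0]$ avoiding the $r$ zeros of the $L_i$ in $\PP^1(\QQ)$ (otherwise $s_i$ and $u_i$ are undefined); this is always possible since $V_S$ is open and the zeros are finitely many. Also, the GTZ input you want is ``Linear equations in primes'' (the paper's reference is \cite[Cor.~1.9]{GT10}, made unconditional by \cite{GT12,GTZ}), not Corollary~1.9 of \cite{GTZ} itself.
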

\begin{proof}
As $r=1$ follows from Dirichlet's theorem on primes in arithmetic progressions, we may assume w.l.o.g.\ that $r\geq 2$. 
By Lemma~\ref{lem:transitiveGL2}, it suffices to show the following assertion:\\
Let $L_i(s,t)=\beta_i t-\alpha_i s$ be distinct primitive integral linear forms, $i=1,
\ldots, r$. Let $S_0$ be the set of primes  $p\leq r$.
Then, for  every positive integer $N$ there exists $[a:b]\in V_N$ such that
$\#\Prms_{S}(L_i(a,b))\leq 1$, for all $1\leq i\leq r$,  with $S=S_0\cup\Prms(N)$.

Let $N$ be a positive integer and $S=S_0\cup \Prms(N)$. For every $p \in S$, we let
\[
a_p := \max_{i,\beta_i \neq 0}v_p(\beta_i) 
\]
and 
\[
C:= \prod_{p\in S} p^{a_p+1}.
\]
For every $1\leq i\leq r$, we
set $c_i := 
\gcd(
\beta_i,C)$ and 
\[
M_i(t,s) =  \frac{\beta_i t - \alpha_i C N s}{c_i}
\]
if $L_i(t,s) \neq \pm s$, and 
\[
M_i(t,s) = s
\]
if $L_i(t,s) = \pm s$.

We claim that there are no local obstructions; namely, for every prime $p$ there exists $[a:b] \in \PP^1(\QQ)$ such that for all $1\leq i\leq r$ we have $p\nmid M_i(a,b)$. Indeed, if $p \not \in S$, then $\deg \prod M_i= r < p+1$, so such $[a:b]$ exists. 
Otherwise, we take $a = b=1$.

Let 
\[
K:=\left\{(x,y) \in \mathbb{R}^2  \left| 0 < y < \frac{x}{CN^2}  \right. \right\}.
\]
The convex set $K$  and the linear forms $M_i(t,s)$ satisfy the conditions of a theorem of Green-Tao-Ziegler \cite[Cor~1.9]{GT10}\footnote{This theorem is stated in \cite[Cor~1.9]{GT10} conditionally on two conjectures one of which is proved in \cite{GT12} and the other in \cite{GTZ}.} (after replacing $M_i$ by $-M_i$ is necessary).
So, we have infinitely many  $(a,b)\in \ZZ^2 \cap K$ such that
$M_i(a,b)$ is prime for every $1 \leq i \leq r$. Since $S$ is finite, we may choose $(a,b)$ such that $M_{i}(a,b)$ is also not in $S$. This implies that $a$ has no prime factors from $S$. Thus $\gcd(a,N)=1$.
Let $\gamma = \gcd(a,NCb)=\gcd(a,Cb)$. So 
\[
[a/\gamma:N C b/\gamma] \in V_N.
\]
Note that 
\[
L_i(a/\gamma,N C b/\gamma) = c_i/\gamma M_i(a,b).
\]
As $c_i$ is a unit is $\ZZ[S^{-1}]$ and $M_{i}(a,b)$ is a prime in $\ZZ[S^{-1}]$, we get that $L_i(a/\gamma,NCb/\gamma)$ divides a primes and so either a prime or a unit in $\ZZ[S^{-1}]$.
\end{proof}

\section{Universally Ramified  Primes}
Recall that we view an element $g$ of $\GL_2(\QQ)$ as an automorphism $g\colon \PP^1_\QQ\to \PP^1_\QQ$ via the action \eqref{actionGL}.
Given $\phi\colon C\to \PP^1_{\QQ}$ denote by $\phi^{g}\colon C \to \PP^1_{\QQ}$ to composition $g\circ \phi$.
If $\phi$ is generically Galois, then so is $\phi^g$, and
\[
\Gal(\phi) \cong \Gal(\phi^g).
\]
From its definition, the set of universally ramified primes is stable under the action of $g$, that is 
\[
U(\phi^g) = U(\phi).
\]
However, the branch locus is not invariant:
\begin{equation}\label{branchpoints}
\RamG(\phi^g) = g \cdot \RamG(\phi).
\end{equation}
We set
\[
U_{\infty}(\phi) = U(\phi) \smallsetminus \{\infty \}.
\]

For the applications to the minimal ramification problem, we are especially interested in controlling the universally ramified primes in fiber products.
For an element $x \in \mathbb{Q}^{\times}$, we let  $g_{x_{\times}}\in \GL_2(\QQ)$ be the matrix 
\[
g_{x_{\times}}:= \left(\begin{matrix}x&0\\ 0&1\end{matrix}\right) \quad \mbox{and} \quad \phi^{x_{\times}}:=\phi^{g_{x_{\times}}} 
\]
the composition map.
For an element $b \in \mathbb{Q}$, we  let $g_{b^{+}}$ be the matrix 
\[
g_{b^+} :=\left(\begin{matrix}1&b\\ 0&1\end{matrix}\right) \quad \mbox{and}\quad  \phi^{b_{+}}:= \phi^{g_{b^+}}
\]
the composition map.

Recall that if $\phi\colon C\to \PP^1_\QQ$ is a morphism of smooth geometrically connected projective $\QQ$-curves, then  $\phi_\ZZ \colon \frak{C} \to \PP^1_\ZZ$ is the normalization of $\PP^1_\ZZ$ in $C$, and the branch locus $\frak{R}_{\phi}$ is a closed subscheme of $\PP^1_\ZZ$.

We say that a prime number $p$  is \emph{vertically ramified} in $\phi$ if
\[
\PP^1_{\mathbb{F}_p} \subset \frak{R}_\phi \subset \PP^1_\ZZ
\]
(under the natural embedding induced from $\ZZ\to \FF_p$).
This notion is consistent with the one in \cite{Legrand2013}.
We denote the set of vertically ramified primes by $V(\phi)$.
Let $g\in \GL_2(\ZZ_p)$. As an automorphism of $\PP^1_{\ZZ_p}$, it follows that
\begin{equation}\label{eq:Vinvariant}
p\in V(\phi) \Longleftrightarrow p\in V(\phi^g), \qquad \mbox{if} \quad g\in \GL_2(\ZZ_p)\cap \GL_2(\QQ).
\end{equation}
However, for general $g\in \GL_2(\QQ)$ it may happen that  $V(\phi)\neq V(\phi^g)$. We also note that by Abhyankar's lemma, for $\phi_i\colon C_i\to \PP^1(\QQ)$, $i=1,2$, we have
\begin{equation}\label{eq:abhy}
V(\phi_1\times_{\PP^1} \phi_2)  = V(\phi_1) \cup V(\phi_2).
\end{equation}
Let $\RamG(\phi) = \{ (D_1) , \ldots, (D_r)\}$ and let $B(\phi)$ the set of  prime numbers $p$ for which for every $[a:b]\in \PP^1(\FF_p)$ there is $i$ with  $D_i(a,b)=0$ (in $\FF_p$). 
As in Lemma~\ref{lem:obstructionimpliessmallprime}, one has
\begin{equation}\label{eq:Binvariant}
p\in B(\phi) \Rightarrow p+1\leq  \deg(\RamG(\phi)) :=\sum_{i=1}^r\deg D_i.
\end{equation}
In general, we conclude
\begin{equation}\label{eq:ubdbyvb}
 U_{\infty}(\phi) \subseteq V(\phi)\cup B(\phi),
\end{equation}
see~\cite[Specialization Inertia Theorem (1)]{Legrand2013}.

\begin{lemma}\label{lem:uram-1}
Let $\phi \colon C \to \PP^1$ be  a branched covering and let $p\neq q$ be prime numbers such that $p\not\in U(\phi)$.
\begin{enumerate}
\item\label{lem:uram-1:1}  There exists a positive integer $A$ 
such that for every sequence of integers $k_1,\ldots,k_r$ that are multiples of $A$ we have 
that   $p \not \in U(\prod_{\PP^1} \phi^{{(q^{k_i})}_{\times}})$.
\item\label{lem:uram-1:2}  There exists a positive integer $B$ 
such that for every sequence of integers $k_1,\ldots,k_r$ that are multiples of  $B$, we have    $p \not \in U(\prod_{\PP^1} \phi^{{{k_i}}_{+}})$.
\end{enumerate}
\end{lemma}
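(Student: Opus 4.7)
The plan is to exhibit a single rational point $\zeta_0 \in \PP^1(\QQ)$ whose images under all the given automorphisms of $\PP^1$ simultaneously land in a fixed $p$-adic neighbourhood on which $\phi$ specializes to an algebra unramified at $p$. This suffices because the specialization of a fiber product is the tensor product $A^{\phi_1 \times_{\PP^1} \phi_2}_\zeta = A^{\phi_1}_\zeta \otimes_\QQ A^{\phi_2}_\zeta$, and the tensor product of two $\QQ$-algebras unramified at $p$ remains unramified at $p$ (base-changing to $\QQ_p^{ur}$ gives a product of copies of $\QQ_p^{ur}$). Combined with the identity $A^{\phi^g}_\zeta = A^\phi_{g^{-1}\zeta}$, the task reduces to finding one $\zeta \in \PP^1(\QQ)$ such that $g^{-1}\zeta$ lies in $W := \{\eta \in \PP^1(\QQ) : p \notin \Ram(A^\phi_\eta/\QQ)\}$ for every $g$ in the list.

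Since $p \notin U(\phi)$, the set $W$ is nonempty, and by Lemma~\ref{lem:open-unr} (applied with $S = \{p\}$) it contains a nonempty $p$-adic open subset. Fix $\zeta_0 = [a_0:b_0] \in W$ with $a_0 b_0 \neq 0$ (possible since $W$ is $p$-adically open and points with both coordinates nonzero are $p$-adically dense) and choose $N \geq 1$ such that every $[a:b] \in \PP^1(\QQ)$ at $p$-adic distance less than $p^{-N}$ from $\zeta_0$ lies in $W$.

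For part (1), take $\zeta = \zeta_0$, so that $g_{(q^{k_i})_\times}^{-1} \zeta_0 = [a_0 : q^{k_i} b_0]$. Since $p \neq q$, $q$ is a unit in $\ZZ_p$ of finite multiplicative order modulo every power of $p$; let $A$ be the order of $q$ modulo $p^M$ for $M$ sufficiently large (depending on $N$ and on $v_p(a_0), v_p(b_0)$). Then $A \mid k_i$ forces $q^{k_i} \equiv 1 \pmod{p^M}$, so $[a_0 : q^{k_i} b_0]$ lies in the prescribed $p$-adic neighbourhood of $\zeta_0$, hence in $W$. For part (2), analogously $g_{k_i^+}^{-1} \zeta_0 = [a_0 - k_i b_0 : b_0]$, and setting $B$ to be a sufficiently high power of $p$ (depending on $N$ and $v_p(b_0)$) ensures that $B \mid k_i$ makes $v_p(k_i b_0 / a_0)$ arbitrarily large, placing this point in the same neighbourhood.

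The only technical point is to reconcile the projective representative $[a:b]$ with $\gcd(a,b)=1$ with $p$-adic convergence on $\PP^1(\QQ_p)$: after applying $g^{-1}$ the coordinates need no longer be coprime, but the $p$-adic distance on $\PP^1(\QQ_p)$ is independent of the chosen homogeneous representative, so the estimates above go through verbatim. Beyond this routine bookkeeping no substantive obstacle is anticipated.
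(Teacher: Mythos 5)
Your proposal is correct and follows essentially the same route as the paper's proof: both invoke Lemma~\ref{lem:open-unr} with $S=\{p\}$ to obtain a $p$-adic neighbourhood on which $p$ is unramified, both reduce ramification of the fiber product to ramification of each factor (you via the tensor-product identity $A^{\phi_1\times_{\PP^1}\phi_2}_\zeta\cong A^{\phi_1}_\zeta\otimes_\QQ A^{\phi_2}_\zeta$, the paper via Abhyankar's lemma --- these are the same observation in this setting), and both conclude by choosing $A$ a multiple of the order of $q$ in $(\ZZ/p^M\ZZ)^\times$ and $B$ a high power of $p$ so that the relevant matrices push $\zeta_0$ only a small $p$-adic distance. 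The paper phrases the last step abstractly via continuity of the $\GL_2(\QQ_p)$-action on $\PP^1(\QQ_p)$, whereas you compute the transformed points $[a_0:q^{k_i}b_0]$ and $[a_0-k_ib_0:b_0]$ explicitly; this is a presentational difference only.
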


\begin{proof}
By Lemma~\ref{lem:open-unr} with $S=\{p\}$, there exists a nonempty $p$-adic open set $V$ such that $p$ is unramified in $A^{\phi}_\zeta$, for all $\zeta\in V$. Fix some $\zeta\in V$.
By the $p$-adic continuity of the action of $\GL_2(\QQ_p)$ on $\mathbb{P}^1(\QQ_p)$, there exists an open neighborhood $ W \subset \GL_2(\QQ_p)$ of the identity matrix $I$ such that for any $g \in W \cap \GL_2(\QQ)$ we have that $g \zeta \in V $. 
In particular, $p$ is unramfied at $\phi^g(\zeta)$.
By Abhyankar's lemma, given any set of elements $g_1,\ldots g_n \in W$, 
$p$ is unramified at $\psi^{-1}(\zeta)$, for 
\[
\psi = \prod_{\mathbb{P}^1} \phi^{g_i}.
\]
  
Thus, for \ref{lem:uram-1:1},  it suffices to find a positive integer $A$ such that if $k$ is a multiple of $A$, then $g^{(q^k)_{\times}} \in W$. For this we  take $A= (p-1)p^m$ for a sufficiently large $m$.

Similarly, for \ref{lem:uram-1:2}, it suffices to find a positive integer $B$ such that if $k$ is a multiple of $B$, then $g^{k_{+}} \in W$. For this we take $B = p^\ell$ for a sufficiently large $\ell$.
\end{proof}

\begin{lemma}\label{lem:uram_S_add}
Let $\phi \colon C \to \PP^1$ be  a branched covering with rational branch locus.
Let $n \geq 1$ be an integer and $S$ a finite set of primes. Then, there exist a sequence of integers
$k_1,\dots,k_n$ such that 
\begin{align}
\label{lem:uram_S_add:1} &\RamG(\phi^{{{k_i}}_{+}}) \cap  \RamG(\phi^{{{{k_j}_{+}}}}) \subset \{\infty\}, & \mbox{for } i \neq j ,\\
\label{lem:uram_S_add:2}  &U(\prod_{\PP^1} \phi^{{{k_i}}_{+}})\cap S \subset  U(\phi).
\end{align}
\end{lemma}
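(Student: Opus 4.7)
The plan is to choose $k_1,\dots,k_n$ as distinct multiples of a sufficiently large positive integer $B$. Divisibility by $B$ will force each finite prime of $S\setminus U(\phi)$ to leave $U(\prod_{\PP^1}\phi^{{k_i}_{+}})$ via Lemma~\ref{lem:uram-1}\ref{lem:uram-1:2}, and the size of $B$ will separate the translated branch loci, yielding condition~\eqref{lem:uram_S_add:1}.

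First, for \eqref{lem:uram_S_add:1}: since $\RamG(\phi)\subseteq\PP^1(\QQ)$ is rational, its affine part is a finite set $X=\{x_1,\dots,x_s\}\subseteq\QQ$, and $\infty$ may or may not be a branch point. The matrix $g_{c^+}$ acts by $t\mapsto t+c$ in the affine chart and fixes $\infty$, so the finite part of the branch locus of $\phi^{c_{+}}$ is $X+c$. Hence $\RamG(\phi^{{k_i}_{+}})\cap\RamG(\phi^{{k_j}_{+}})\subseteq\{\infty\}$ is equivalent to the avoidance condition $k_i-k_j\notin\Delta:=X-X$, where $\Delta\subseteq\QQ$ is a finite set.

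Next, for \eqref{lem:uram_S_add:2} restricted to the finite primes: Lemma~\ref{lem:uram-1}\ref{lem:uram-1:2} supplies, for each finite $p\in S\setminus U(\phi)$, a positive integer $B_p$ such that any $k_1,\dots,k_n\in B_p\ZZ$ yield $p\notin U(\prod_{\PP^1}\phi^{{k_i}_{+}})$. I would take $B$ to be a common multiple of all the $B_p$, enlarged if necessary so that $B>|\delta|$ for every $\delta\in\Delta\cap\ZZ$, and then set $k_i:=iB$. Each $k_i$ is a multiple of every $B_p$, and $|k_i-k_j|\geq B$ for $i\neq j$ forces $k_i-k_j\notin\Delta$; both~\eqref{lem:uram_S_add:1} and the finite-prime half of~\eqref{lem:uram_S_add:2} are satisfied.

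The main obstacle is the archimedean prime when $\infty\in S\setminus U(\phi)$, because Lemma~\ref{lem:uram-1}\ref{lem:uram-1:2} is stated for finite primes only (the $p$-adic contraction $g_{k_{+}}\to I$ has no archimedean counterpart). I would handle it by rerunning the openness-plus-Abhyankar template of that lemma's proof: Lemma~\ref{lem:open-unr} provides a nonempty open $V_\infty\subseteq\PP^1(\RR)$ on which $A^\phi_\zeta$ is unramified at $\infty$, and the rationality of the branch locus ensures $V_\infty$ contains an open real arc disjoint from the finitely many real branch points. By a further placement of a base point $\zeta_0\in\QQ$ and of the progression $\{k_i\}$ so that $\zeta_0+k_1,\dots,\zeta_0+k_n$ all lie in $V_\infty$, the factorisation $A^{\prod_{\PP^1}\phi^{{k_i}_{+}}}_{\zeta_0}=\bigotimes_i A^\phi_{\zeta_0+k_i}$ together with Abhyankar's lemma yields $\infty\notin U(\prod_{\PP^1}\phi^{{k_i}_{+}})$.
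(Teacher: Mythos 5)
Your treatment of \eqref{lem:uram_S_add:1} and of the finite primes in \eqref{lem:uram_S_add:2} is essentially the paper's: the paper sets $M=\max R-\min R$ where $R$ is the set of finite branch points, applies Lemma~\ref{lem:uram-1}(\ref{lem:uram-1:2}) at each finite $p\in S\setminus U(\phi)$ to obtain $B_p$, and then takes $k_i=iB$ with $B$ a common multiple of the $B_p$ that is larger than $M$. Your finite difference set $\Delta=X-X$ and your choice of $B$ reproduce exactly this construction.

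You are right that the archimedean case is not covered by Lemma~\ref{lem:uram-1}: its proof takes $B=p^{\ell}$ so that $g_{k^{+}}$ becomes $p$-adically close to the identity, and there is no such contraction at $\infty$. In fact the paper's own proof does not treat $\infty$ either --- ``for every $p\in S\setminus U(\phi)$'' is implicitly restricted to finite $p$ by the invocation of Lemma~\ref{lem:uram-1}. However, your proposed repair does not go through, and the archimedean part of \eqref{lem:uram_S_add:2} can genuinely fail. You want to choose $\zeta_0$ so that all translates $\zeta_0+k_i$ lie in the open set $V_\infty\subseteq\PP^1(\RR)$ on which $A^\phi_\zeta$ is unramified at $\infty$; but nothing forces $V_\infty$ to contain an arc around $\infty$ or any unbounded arc. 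Take $C$ the conic $y^2=x(1-x)$ with $\phi$ the $x$-coordinate map: a degree-$2$ branched covering of $\PP^1_\QQ$ with rational branch locus $\{0,1\}$, with $\infty\notin U(\phi)$ (the fiber at $x=1/2$ is $\QQ\times\QQ$, unramified everywhere), and with $V_\infty=(0,1)$. For $n\geq 2$, condition \eqref{lem:uram_S_add:1} forces $|k_i-k_j|\geq 2$ for $i\neq j$, so no base point can place two of the translates in an interval of length $1$; hence $\infty\in U\bigl(\prod_{\PP^1}\phi^{{k_i}_{+}}\bigr)$ for every admissible choice of the $k_i$. Thus the statement as written needs the additional hypothesis that $S$ consists of finite primes, or that $\infty\in U(\phi)$; your archimedean patch, as written, is incorrect rather than merely incomplete.
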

\begin{proof}
Let $R \subset \mathbb{Q} = \PP^1(\QQ)\smallsetminus \{\infty\}$ be the finite branch points and 
\[
M = {\max R}-{\min R}
\]
the diameter of $R$.
As the set of finite  branch points of $\phi^{k_i^+}$ is $R+k_i$, to obtain \eqref{lem:uram_S_add:1}, it suffices to take the $k_i$'s such that $k_i-k_{i-1} > M$.

For every $p \in S \smallsetminus U(\phi)$, we let $B_p$ be the constant from Lemma~\ref{lem:uram-1} (2) (applied to $\phi$ and $p$). Then, to obtain \eqref{lem:uram_S_add:2},  it suffices to take the $k_i$'s to be multiples of $B_0=\prod_{p \in S \smallsetminus U(\phi)}B_p$.  Clearly, these two sufficient conditions can be simultaneously be satisfied; e.g., take  $k_i = i B$, where $B$ is a multiple of $B_0$ that is larger than $M$.
\end{proof}

\begin{lemma}\label{lem:uram_add}
Let $\phi \colon C \to \PP^1$ be  a branched covering with rational branch locus 
and let $n \geq 1$ be an integer. Then there exists a sequence of integers
$k_1,\dots,k_n$ such that both \eqref{lem:uram_S_add:1} and 
\begin{equation}
\label{lem:i22} U_{\infty}(\prod_{\PP^1} \phi^{{{k_i}}_{+}})  \subset  U_{\infty}(\phi)  
\end{equation}
hold true.
\end{lemma}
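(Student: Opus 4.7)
The plan is to bootstrap from Lemma~\ref{lem:uram_S_add}: that lemma already controls $U(\psi)\cap S$ for any prescribed finite set of primes $S$, so I only need to exhibit a single finite set $S$ guaranteed to contain $U_{\infty}(\psi)$ no matter which integers $k_1,\dots,k_n$ are eventually chosen for $\psi = \prod_{\PP^1}\phi^{k_i^+}$. Once such an $S$ is in hand, I will feed it into Lemma~\ref{lem:uram_S_add} and read off the conclusion.

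To construct $S$, I use \eqref{eq:ubdbyvb}, which says $U_{\infty}(\psi) \subseteq V(\psi)\cup B(\psi)$, and bound each piece uniformly in the $k_i$. For the vertical part, each translation matrix $g_{k_i^+}$ has integer entries and hence lies in $\GL_2(\ZZ_p)$ for every finite prime $p$; by \eqref{eq:Vinvariant} this gives $V(\phi^{k_i^+})=V(\phi)$, and Abhyankar's lemma \eqref{eq:abhy} iterated then yields $V(\psi)=V(\phi)$. For the $B$-part, $\RamG(\psi)$ is the scheme-theoretic union of the $\RamG(\phi^{k_i^+})$, so $\deg\RamG(\psi) \leq n\cdot\deg\RamG(\phi)$, and \eqref{eq:Binvariant} then confines $B(\psi)$ inside the finite set of primes $p$ with $p+1 \leq n\cdot\deg\RamG(\phi)$. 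Setting
\[
S := V(\phi) \,\cup\, \{p\text{ prime} : p+1 \leq n\cdot\deg\RamG(\phi)\},
\]
I obtain a finite set of \emph{finite} primes that depends only on $\phi$ and $n$ and satisfies $U_{\infty}(\psi)\subseteq S$ for every admissible $\psi$.

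Next, I apply Lemma~\ref{lem:uram_S_add} to this particular $S$. It returns integers $k_1,\ldots,k_n$ simultaneously realizing \eqref{lem:uram_S_add:1} and the inclusion $U(\psi)\cap S \subseteq U(\phi)$. Combining with the a priori bound $U_{\infty}(\psi)\subseteq S$ gives
\[
U_{\infty}(\psi) = U_{\infty}(\psi) \cap S \subseteq U(\psi) \cap S \subseteq U(\phi),
\]
and since $\infty\notin U_{\infty}(\psi)$ by definition, this sharpens to $U_{\infty}(\psi)\subseteq U_{\infty}(\phi)$, which is exactly \eqref{lem:i22}.

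The only substantive step is the second paragraph, where the uniformity of the bound on $U_{\infty}(\psi)$ must be certified independently of the eventual $k_i$. The $\ZZ$-integrality of $g_{k_i^+}$ is what keeps the vertical ramification locus $V$ invariant under the translations, while the $B$-type obstructions are automatically bounded in terms of the branch-locus degree, which grows at most linearly in $n$ regardless of how the $k_i$ are spread out. After this observation, everything reduces to a direct application of Lemma~\ref{lem:uram_S_add} together with \eqref{eq:ubdbyvb}, \eqref{eq:Vinvariant}, \eqref{eq:abhy}, and \eqref{eq:Binvariant}; no further sieve-theoretic input is required.
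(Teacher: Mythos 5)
Your proof is correct and takes essentially the same route as the paper: both choose $S$ to consist of $V(\phi)$ together with all primes bounded in terms of $n\cdot\deg\RamG(\phi)$, apply Lemma~\ref{lem:uram_S_add} to this $S$, and control the remaining obstructions via \eqref{eq:ubdbyvb}, \eqref{eq:Vinvariant}, \eqref{eq:abhy}, and \eqref{eq:Binvariant}. The only reorganization is that you certify the uniform inclusion $U_{\infty}(\psi)\subseteq S$ before selecting the $k_i$, whereas the paper fixes the $k_i$ first and then verifies $p\notin S\Rightarrow p\notin U_{\infty}(\psi)$; the underlying argument is identical.
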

\begin{proof}
Denote 
\[
d = \#\RamG(\phi)
\]
and let $S$ be a finite set of primes that contains $V(\phi)$ and all the prime numbers $p\leq nd$.
Now choose $k_1,\dots,k_n$ as in  Lemma \ref{lem:uram_S_add}  and denote 
\[
\psi = \prod_{\PP^1} \phi^{{k_i}_{+}}.
\]
Thus \eqref{lem:uram_S_add:1} holds true. 

By \eqref{lem:uram_S_add:2} to obtain \eqref{lem:i22}, it suffices to show that 
\[
p \not \in S \Longrightarrow p \not \in U_{\infty}(\psi) .
\]
Indeed,  given $p\not\in S$, as $p>nd\geq \# \RamG(\psi)$, by \eqref{eq:Binvariant} we have $p \not \in B(\psi)$. 
Thus, by \eqref{eq:ubdbyvb} it remains to show that $p \not \in V(\psi)$: By \eqref{eq:abhy},
\[
V(\psi) = \bigcup V(\phi^{{{k_i}}_{+}})
\] 
and since  $p \not \in V(\phi)$ and 
\[
g_{{{k_i}}_{+}} \in \GL_2(\ZZ) \subseteq \GL_2(\ZZ_p) \cap \GL_2(\QQ),
\]
we also have $p \not \in V(\phi^{{{k_i}}_{+}})$ by \eqref{eq:Vinvariant}. Therefore, $p\not\in V(\psi)$ and by \eqref{eq:ubdbyvb} $p\not\in U_\infty(\psi)$. 
\end{proof}

\begin{lemma}\label{lem:uram_S_mult}
Let $\phi \colon C \to \PP^1$ be  a dominant map of curves with rational branch locus. Let $n \geq 1$ be an  integer, $q$ a rational prime, and $S$ be finite set of primes not containing $q$. Then, there exists a sequence of integers
$k_1,\ldots,k_n$ such that 
\begin{align}
\label{lem:i:1} &\RamG(\phi^{{q^{k_i}}_{\times}}) \cap  \RamG(\phi^{{q^{k_j}_{\times}}}) \subset \{0,\infty\},& \mbox{for } i \neq j,\\
\label{lem:i:2} &U(\prod_{\PP^1} \phi^{{q^{k_i}}_{\times}})\cap S \subset  U(\phi) .
\end{align}
\end{lemma}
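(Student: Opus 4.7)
The plan is to mirror the proof of Lemma~\ref{lem:uram_S_add}, replacing the additive translations $g_{k_{+}}$ with the multiplicative scalings $g_{q^{k}_{\times}}$ and invoking Lemma~\ref{lem:uram-1}(1) in place of Lemma~\ref{lem:uram-1}(2).

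To arrange \eqref{lem:i:1}, I would exploit that $g_{q^{k}_{\times}}$ fixes both $0=[0:1]$ and $\infty=[1:0]$ and sends any $\alpha\in\QQ^{\ast}$ to $q^{k}\alpha$. Writing $R^{\ast}$ for the set of rational branch points of $\phi$ that lie in $\QQ^{\ast}$, which is finite by the rationality of $\RamG(\phi)$, one has
\[
\RamG(\phi^{q^{k_{i}}_{\times}})\smallsetminus\{0,\infty\}=q^{k_{i}}\cdot R^{\ast}.
\]
Two scaled points $q^{k_{i}}\alpha$ and $q^{k_{j}}\beta$ with $\alpha,\beta\in R^{\ast}$ can coincide in $\QQ^{\ast}$ only if $k_{i}-k_{j}=v_{q}(\beta)-v_{q}(\alpha)$, so the finite integer set
\[
V_{q}:=\{\,v_{q}(\beta)-v_{q}(\alpha):\alpha,\beta\in R^{\ast}\,\}
\]
governs the obstruction: any choice of exponents whose pairwise differences avoid $V_{q}$ yields branch loci that are pairwise disjoint outside $\{0,\infty\}$.

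For \eqref{lem:i:2}, I would apply Lemma~\ref{lem:uram-1}(1) to each $p\in S\smallsetminus U(\phi)$; the hypothesis $q\notin S$ guarantees $p\neq q$, so the lemma supplies a positive integer $A_{p}$ such that $p\notin U(\prod_{\PP^{1}}\phi^{q^{k_{i}}_{\times}})$ as soon as every $k_{i}$ is a multiple of $A_{p}$. Letting $A_{0}:=\prod_{p\in S\smallsetminus U(\phi)}A_{p}$, any tuple of $k_{i}$'s divisible by $A_{0}$ forces $U(\prod_{\PP^{1}}\phi^{q^{k_{i}}_{\times}})\cap S\subseteq U(\phi)$, since primes in $S\cap U(\phi)$ are in $U(\phi)$ trivially.

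Both requirements are met simultaneously by picking a positive integer $K$ which is a multiple of $A_{0}$ and strictly exceeds $\max_{v\in V_{q}}|v|$, and setting $k_{i}:=iK$ for $i=1,\ldots,n$: the differences $k_{i}-k_{j}=(i-j)K$ have absolute value at least $K$ and so are too large to lie in $V_{q}$, while each $k_{i}$ is a multiple of $A_{0}$ by construction. The one step that requires any care is the identification of the obstruction set $V_{q}$, which relies on the rationality hypothesis so that the $q$-adic valuations are well-defined integers; beyond this, the argument is a routine adaptation of the additive case, and I do not foresee a genuine obstacle.
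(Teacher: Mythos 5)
Your proof is correct and follows essentially the same approach as the paper: both treat \eqref{lem:i:1} by making the gaps $k_i-k_j$ larger than the ``spread'' of the nonzero branch points, and both obtain \eqref{lem:i:2} by requiring the $k_i$ to be divisible by the product of the constants $A_p$ from Lemma~\ref{lem:uram-1}(1). The only cosmetic difference is that you measure the spread $q$-adically (via $V_q=\{v_q(\beta)-v_q(\alpha)\}$, exploiting that a collision $q^{k_i}\alpha=q^{k_j}\beta$ forces $k_i-k_j=v_q(\beta)-v_q(\alpha)$), whereas the paper measures it archimedeanly via $M=\max_{x\in R}\log_q|x|-\min_{x\in R}\log_q|x|$; both yield the same arithmetic-progression choice $k_i=iK$ with $K$ a sufficiently large multiple of $A_0$.
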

\begin{proof}
Denote by $R  \subset \mathbb{Q}^{\times}=\PP^1(\QQ)\smallsetminus\{0,\infty\}$ the finite nonzero branch points and set 
\[
M = \max_{x\in R}\log_q|x|-\min_{x \in R}\log_q|x|.
\]
By \eqref{branchpoints}, \eqref{lem:i:1} would follow if  ${k_i-k_{i-1}} > M$.  

For every $p \in S \smallsetminus U(\phi)$, we let $A_p = A$ be the constant from Lemma~\ref{lem:uram-1}(1) (applied to $\phi$ and $p\neq q$). Then,   \eqref{lem:i:2} would follow if the $k_i$'s to be multiples of $A_0:=\prod_{p \in S \smallsetminus U(\phi)}A_p$. 
We thus put $k_i = i\cdot A$, where $A$ is a multiple of $A_0$ that is larger then $M$ to finish the proof.
\end{proof}

\begin{lemma}\label{lem:uram_mult}
Let $\phi \colon C \to \PP^1$ be  a dominant map of curves with branch locus defined over $\mathbb{Q}$, let $n \geq 1$ be an  integer, and let $q$ be a rational prime. Then, there exists a sequence of integers
$k_1,\dots,k_n$ such that both \eqref{lem:i:1} and 
\begin{equation}
\label{lem:i2:2}  U_{\infty}(\prod_{\PP^1} \phi^{{q^{k_i}}_{\times}})  \subset  U_{\infty}(\phi) \cup \{q\} 
\end{equation}
hold true. 
\end{lemma}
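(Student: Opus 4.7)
The plan is to imitate the proof of Lemma~\ref{lem:uram_add}, substituting the multiplicative twist for the additive one throughout. The only conceptual new ingredient is that the matrix
\[
g_{q^k_\times}=\begin{pmatrix}q^k & 0 \\ 0 & 1\end{pmatrix}
\]
has determinant $q^k$, so it fails to lie in $\GL_2(\ZZ_p)$ precisely when $p=q$. This is the sole reason that $q$ must be added to the right-hand side of the conclusion, and in my view also the only point in the argument that requires fresh attention.

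Concretely, I would set $d=\#\RamG(\phi)$ and choose a finite set of primes $S$ that avoids $q$ but contains both $V(\phi)\smallsetminus\{q\}$ and every rational prime $p\leq nd$ with $p\neq q$. Feeding $n$ and this $S$ into Lemma~\ref{lem:uram_S_mult} produces integers $k_1,\ldots,k_n$ satisfying \eqref{lem:i:1} and, writing $\psi=\prod_{\PP^1}\phi^{q^{k_i}_\times}$, also $U(\psi)\cap S\subset U(\phi)$. The first conclusion \eqref{lem:i:1} is thereby in hand, and for \eqref{lem:i2:2} it suffices to prove the implication
\[
p\notin S\cup\{q\}\ \Longrightarrow\ p\notin U_\infty(\psi),
\]
since then $U_\infty(\psi)\subset (U_\infty(\psi)\cap S)\cup\{q\}\subset U_\infty(\phi)\cup\{q\}$.

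So I would fix $p\notin S\cup\{q\}$ and show $p\notin V(\psi)\cup B(\psi)$, which by \eqref{eq:ubdbyvb} forces $p\notin U_\infty(\psi)$. Since \eqref{lem:i:1} keeps the branch loci of the twists disjoint outside $\{0,\infty\}$ and multiplication by $q^{k_i}$ fixes $\{0,\infty\}$, one readily sees $\deg\RamG(\psi)\leq nd$; together with $p>nd$ this gives $p\notin B(\psi)$ via \eqref{eq:Binvariant}. For the $V$-part, the constraint $p\neq q$ ensures that $q^{k_i}\in\ZZ_p^\times$ for every $i$, whence $g_{q^{k_i}_\times}\in\GL_2(\ZZ_p)\cap\GL_2(\QQ)$, and \eqref{eq:Vinvariant} yields the equivalence $p\in V(\phi^{q^{k_i}_\times})\Leftrightarrow p\in V(\phi)$. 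Because $V(\phi)\smallsetminus\{q\}\subset S$ and $p\notin S\cup\{q\}$, we obtain $p\notin V(\phi)$ and hence $p\notin V(\phi^{q^{k_i}_\times})$ for every $i$; Abhyankar's lemma \eqref{eq:abhy} then gives $p\notin V(\psi)$, completing the proof.

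The main (and essentially the only) obstacle is the $p$-adic unit computation for $\det g_{q^{k_i}_\times}=q^{k_i}$: this is what pinpoints $q$ as the unique prime at which the $V$-invariance of the twists may fail, and it is what dictates the presence of $\{q\}$ in \eqref{lem:i2:2}. Everything else is a direct transcription of the additive argument.
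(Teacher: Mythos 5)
Your proof is correct and follows essentially the same route as the paper: the same choice of $S$, the same appeal to Lemma~\ref{lem:uram_S_mult}, and the same split of $U_\infty(\psi)$ into the $B$ and $V$ contributions via \eqref{eq:ubdbyvb}, \eqref{eq:Binvariant}, \eqref{eq:abhy}, and \eqref{eq:Vinvariant}. The observation that $\det g_{q^{k_i}_\times}=q^{k_i}$ is a $p$-adic unit exactly when $p\neq q$, which forces the extra $\{q\}$ in the conclusion, is left implicit in the paper but is exactly the right way to see it.
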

\begin{proof}
Denote 
\[
d = \#\RamG(\phi).
\]
Let $S$ be a finite set of primes $\neq q$ that contains $V(\phi)\cup \{p\leq nd\} \smallsetminus \{q\}$.
Take $k_1,\dots,k_n$ as in  Lemma \ref{lem:uram_S_mult}  and denote 
$$\psi = \prod_{\PP^1} \phi^{{q^{k_i}}_{\times}}.$$  
As \eqref{lem:i:1} holds true, it suffices to prove \eqref{lem:i2:2}. For this, by \eqref{lem:i:2}, it suffices to to show that if $p \not \in S$ and $p \neq q$, then
\[
p \not \in U_{\infty}(\psi) .
\]
Indeed, given $p\not\in S$ and $p\neq q$, we have $p >nd \geq \#\RamG(\psi)$, so by \eqref{eq:Binvariant}, $p \not \in B(\psi)$. 
By \eqref{eq:abhy}, 
\[
V(\psi)  = \bigcup V(\phi^{{q^{k_i}}_{\times}}).
\] 
As $p \not \in V(\phi)$ and 
\[
g_{{q^{k_i}}_{\times}} \in \GL_2(\ZZ_p) \cap \GL_2(\QQ),
\]
\eqref{eq:Vinvariant} gives that $p \not \in V(\phi^{{(q^{k_i})}_{\times}})$, so by \eqref{eq:ubdbyvb}, $p\not\in U_{\infty}(\psi)$, as needed. 
\end{proof}

\section{Irreducibility of Fiber Products and Group Theory}
We shall use the following function field criterion for irreducibility: Let  $\phi_1\colon C_1\to \mathbb{P}^1_{\QQ}$ and $\phi_2\colon C_2\to \mathbb{P}^1_{\QQ}$ be geometrically irreducible branched coverings with function field extensions $F_1/\QQ(T)$ and $F_2/\QQ(T)$, respectively, in some fixed algebraically closed field of $\QQ(T)$. The the fiber product $C_1\times_{\PP^1_\QQ} C_2$ is  irreducible (respectively geometrically irreducible) if and only if $F_1$, $F_2$ are linearly disjoint over $\QQ(T)$ (respectively $F_1\bar\QQ$ and $F_2\bar\QQ$ are linearly disjoint over $\bar\QQ(T)$).

\begin{lemma}\label{lem:irr_one_br}
Let $\phi_i\colon C_i\to \mathbb{P}^1_{\QQ}$ be a geometrically irreducible branched covering, $i=1,2$. Assume that $\RamG(\phi_1)\cap \RamG(\phi_2)\subseteq\{\alpha\}$ for some $\alpha\in \PP^1(\QQ)$. Then $C_1\times_{\PP^1} C_2$ is geometrically irreducible. 
\end{lemma}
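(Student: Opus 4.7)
My plan is to apply the function field criterion stated immediately before the lemma. Let $F_i/\QQ(T)$ be the function field extension corresponding to $\phi_i$, and let $\hat{F}_i$ be the Galois closure of $F_i\bar\QQ$ over $\bar\QQ(T)$ inside a fixed algebraic closure. Geometric irreducibility of $C_1\times_{\PP^1}C_2$ is equivalent to linear disjointness of $F_1\bar\QQ$ and $F_2\bar\QQ$ over $\bar\QQ(T)$, and since subextensions of linearly disjoint extensions are linearly disjoint, it suffices to prove that $\hat F_1$ and $\hat F_2$ are linearly disjoint over $\bar\QQ(T)$.

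Because $\hat F_1$ and $\hat F_2$ are both Galois over $\bar\QQ(T)$, linear disjointness is equivalent to $\hat F_1 \cap \hat F_2 = \bar\QQ(T)$. So let $E = \hat F_1 \cap \hat F_2$; it is a Galois extension of $\bar\QQ(T)$. The key observation is that passing to the Galois closure does not introduce new branch points: an étale cover has an étale Galois closure, so if $\phi_i$ is étale over $\PP^1_{\bar\QQ}\smallsetminus \RamG(\phi_i)$, then the cover of $\PP^1_{\bar\QQ}$ corresponding to $\hat F_i$ is also étale there. Consequently the branch locus of $E/\bar\QQ(T)$ is contained in $\RamG(\phi_1)\cap\RamG(\phi_2)\subseteq\{\alpha\}$.

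Thus $E$ corresponds to a connected finite étale cover of $\PP^1_{\bar\QQ}\smallsetminus\{\alpha\}$. Since $\PP^1_{\bar\QQ}\smallsetminus\{\alpha\}\cong \AA^1_{\bar\QQ}$ has trivial étale fundamental group in characteristic zero (equivalently, $\CC$ is simply connected, and we may invoke the comparison theorem after choosing an embedding $\bar\QQ\hookrightarrow\CC$), the cover must be trivial, i.e.\ $E = \bar\QQ(T)$. Therefore $\hat F_1$ and $\hat F_2$ are linearly disjoint over $\bar\QQ(T)$, so the same holds for $F_1\bar\QQ$ and $F_2\bar\QQ$, and the function field criterion yields geometric irreducibility of $C_1\times_{\PP^1}C_2$.

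The only point that requires any care is the claim that the branch locus of $\hat F_i/\bar\QQ(T)$ equals that of $F_i\bar\QQ/\bar\QQ(T)$, which in turn equals $\RamG(\phi_i)$; this is a standard property of Galois closures of étale covers, but it is the substantive input beyond formal manipulation with the linear disjointness criterion and the simple connectedness of $\AA^1_{\bar\QQ}$.
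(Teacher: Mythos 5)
Your proof is correct and takes essentially the same route as the paper: pass to Galois closures $\hat F_i$ of $F_i\bar\QQ$ over $\bar\QQ(T)$, observe that this does not enlarge the branch locus, and conclude that $\hat F_1\cap\hat F_2$ is an extension of $\bar\QQ(T)$ ramified over at most one point, hence trivial. The only cosmetic difference is the last step: the paper invokes the Riemann--Hurwitz formula to show such a cover of $\PP^1_{\bar\QQ}$ must be trivial, whereas you appeal to the triviality of $\pi_1^{\mathrm{et}}(\AA^1_{\bar\QQ})$ in characteristic zero, which is the same fact in different clothing.
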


\begin{proof}
Let $F_1/\QQ(T)$ and $F_2/\QQ(T)$ be the function fields extensions corresponding to $\phi_1,\phi_2$ in some algebraic closure of $\QQ(T)$. Let $E_i = F_i \bar{\QQ}$ be the base change to an algebraic closure $\bar{\QQ}$ of $\QQ$ and let $N_i$ be the Galois closure of $E_i$ over $\bar{\QQ}(T)$, $i=1,2$. 

By Abhyankar's lemma, $N_i$ has the same branch locus as $F_i$, and so $N_1\cap N_2$ is ramified at $\RamG(\phi_1)\cap \RamG(\phi_2)$ which consists, by assumption, of at  most one point. By the Riemann-Hurwitz formula, $N_1\cap N_2=\bar{\QQ}(T)$.

Thus $N_1,N_2$ are linearly disjoint over $\bar{\QQ}(T)$, which implies that the subextensions $E_1,E_2$ are also linearly disjoint. Thus, $C_1 \times_{\PP^1_{\QQ}} C_2$ is geometrically  irreducible.
\end{proof}

In the applications below, we need to relax the condition of Lemma~\ref{lem:irr_one_br} that the branch loci of $\phi_1$ and of $\phi_2$ have at most one rational point in common. For this we need some group theory.

\begin{definition}\label{def:Sp}
For a prime number $p$, we say that a finite group $G$ satisfies \emph{condition-$E(p)$} if all the nontrivial simple quotients of $G$ are of order $p$, but none of the quotients of the commutator $[G,G]$ are. 
\end{definition}

We give a few examples and basic properties and we omit the details: 
\begin{enumerate}
\item Let $G$ be an $E(p)$-group. Then $G$ is a $p$-group if and only if $G$ is abelian.
\item The symmetric group $S_m$ is $E(2)$. 
\item Let $m$ be a positive integer with $v_2(m)\leq 1$. Then, the Dihedral group $D_m$ of order $2m$ is $E(2)$. 
\item If $G,H$ satisfy condition-$E(p)$, then so does $G\times H$. 
\item Let $G$ be a group satisfying condition-$E(p)$ and $N$ a normal subgroup. Then $G/N$ satisfies condition-$E(p)$. (Indeed, $[G/N,G/N]=[G,G]N/N$.)
\item Let $G$ be an $E(p)$-group and $H$ a prefect group, then the wreath product $H\wr G$ satisfies $E(p)$. The proof of this fact is slightly involved, but we omit it, as we do not use.
\end{enumerate}

We study irreducibility of fiber products of covers with $E(p)$-Galois groups. For this we need an auxiliary result from group theory. 

\begin{lemma}\label{lem:grptheory}
Let $p$ be a prime,  $G_1, \ldots, G_n$ groups that satisfy condition-$E(p)$, put $\Phi(G_i):=G_i^p[G_i,G_i]$
and 
\[
\psi \colon G_1\times \cdots \times G_n \to (G_1/\Phi(G_1)) \times \cdots \times (G_n/\Phi(G_n))
\]
the quotient map. Let $H\leq G_1\times \cdots \times G_n$ be such that the restriction of the projection on the $i$-th coordinate to $H$, $\pi_i\colon H \to G_i$ is surjective, for every $i=1,\ldots, n$ and  the restriction of $\psi$ to $H$ is surjective.
 Then $H=G_1\times \cdots \times G_n$.
\end{lemma}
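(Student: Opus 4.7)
The plan is to induct on $n$; the case $n = 1$ is immediate since the hypothesis gives $H = G_1$. For the inductive step, first I would set $G' := G_1 \times \cdots \times G_{n-1}$ and let $H' \leq G'$ denote the image of $H$ under the projection onto the first $n-1$ coordinates. The map $\pi_i \colon H \twoheadrightarrow G_i$ factors through $H \twoheadrightarrow H'$ for every $i<n$, so $H'$ surjects onto each such $G_i$; similarly, projecting the surjection $\psi|_H \colon H \twoheadrightarrow \prod_{i=1}^n G_i/\Phi(G_i)$ onto the first $n-1$ coordinates gives a surjection $H' \twoheadrightarrow \prod_{i<n} G_i/\Phi(G_i)$. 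The inductive hypothesis then yields $H' = G'$, so $H$ is a subgroup of $G' \times G_n$ that surjects onto both factors.

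Next I would apply Goursat's lemma: there exist normal subgroups $N' \trianglelefteq G'$ and $N_n \trianglelefteq G_n$ together with an isomorphism $\sigma \colon G'/N' \xrightarrow{\sim} G_n/N_n$ such that
\[
H = \{(g',g_n) \in G' \times G_n : \sigma(g' N') = g_n N_n\}.
\]
Hence $H = G' \times G_n = \prod_{i=1}^n G_i$ if and only if the common quotient $Q := G'/N' \cong G_n/N_n$ is trivial, so I would assume for contradiction that $Q \neq 1$. Taking a maximal normal subgroup of $Q$ produces a simple quotient $Q \twoheadrightarrow S$, and the composition $G_n \twoheadrightarrow Q \twoheadrightarrow S$ exhibits $S$ as a nontrivial simple quotient of $G_n$; condition-$E(p)$ for $G_n$ then forces $S \cong \ZZ/p$.

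The contradiction will come from the Frattini hypothesis. Composing with $Q \twoheadrightarrow \ZZ/p$ produces surjective homomorphisms $\alpha \colon G' \twoheadrightarrow \ZZ/p$ and $\beta \colon G_n \twoheadrightarrow \ZZ/p$ that agree on $H$: $\alpha(g') = \beta(g_n)$ for every $(g', g_n) \in H$. Thus $(\alpha, \beta)(H)$ is contained in the diagonal of $\ZZ/p \times \ZZ/p$, hence is a proper subgroup. On the other hand, any homomorphism into $\ZZ/p$ is trivial on $p$-th powers and on commutators, so $\alpha$ and $\beta$ factor through $G'/\Phi(G')$ and $G_n/\Phi(G_n)$ respectively; and since $\Phi$ commutes with finite direct products, $G'/\Phi(G') = \prod_{i<n} G_i/\Phi(G_i)$. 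So $(\alpha, \beta)$ factors through $\prod_{i=1}^n G_i/\Phi(G_i)$, and the surjectivity of $\psi|_H$ forces $(\alpha, \beta)(H)$ to be all of $\ZZ/p \times \ZZ/p$ -- contradicting the diagonal containment. Hence $Q = 1$ and $H = \prod_{i=1}^n G_i$, completing the induction. No step is expected to be particularly subtle; the key observation is that the Frattini quotient $G/\Phi(G)$ captures exactly the $\ZZ/p$-valued characters of $G$, which is precisely what the hypothesis on $\psi|_H$ is calibrated to control.
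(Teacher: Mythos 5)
Your proof is correct, but it takes a genuinely different route from the paper's. The paper also reduces by induction to $n=2$, but then argues directly inside $G_1\times G_2$: setting $K_i=\ker\pi_i$ and $C_i=\pi_i^{-1}([G_i,G_i])$, it first shows $\rho|_H$ is onto $G_1^{ab}\times G_2^{ab}$, deduces $C_1C_2=H$, and then upgrades this in two steps to $K_1C_2=H$ and finally $K_1K_2=H$ via the second isomorphism theorem, from which $H\cong H/K_1\times H/K_2\cong G_1\times G_2$. The two upgrade steps crucially invoke the \emph{second} half of condition-$E(p)$: they derive a contradiction from a putative order-$p$ simple quotient of $C_i/K_i\cong[G_i,G_i]$. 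You instead apply Goursat's lemma to the subdirect product $H\leq G'\times G_n$ and reduce everything to the triviality of the common quotient $Q$, which you kill by the clean tension between the diagonal constraint on $(\alpha,\beta)(H)$ and the full surjectivity onto $(\ZZ/p)^2$ forced by $\psi|_H$. Notably, your argument uses only the \emph{first} half of condition-$E(p)$ (every nontrivial simple quotient of $G_n$ has order $p$); the hypothesis on $[G_n,G_n]$ never enters. So your proof is cleaner and in fact establishes the lemma under a strictly weaker hypothesis than stated. (The full $E(p)$-condition is of course still needed elsewhere in the paper, e.g.\ in Proposition~\ref{prop:rig:S_p}, so the authors' stronger hypothesis here is not a loss.)
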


\begin{proof}
Since the family of finite groups satisfying condition-$E(p)$ is close under  direct products and since $\Phi$ respects direct products, by induction argument, we may assume that $n=2$. 
Let $K_i = \ker \pi_i$ and $C_i = \pi_i^{-1} ([G_i,G_i])$, $i=1,2$. Note that $K_i\leq C_i$ are normal in $H$ and that $H/K_i\cong G_i$ and $C_i/K_i = [H/K_i:H/K_i]$. Let 
\[
\rho\colon G_1\times G_2\to G_1^{ab} \times G_2^{ab}
\]
be the abelianization map. We break the proof into several parts. 

\item\textsc{Part 1:} $\rho|_{H}$ is surjective. Indeed, by assumption, $\Phi(G_i)/[G_i,G_i]$ is the Frattini subgroup of $G_i^{ab}$. Thus the assumption gives that $\rho(H)$ generates $G_1^{ab} \times G_2^{ab}$ modulo the Frattini subgroup; hence $\rho(H)=G_1^{ab}\times G_2^{ab}$. 

\item\textsc{Part 2:} $C_1C_2 = H$. 
Indeed, it is immediate that $C_1 = \rho|_H^{-1} (1\times G_2^{ab})$ and $C_2 = \rho|_H^{-1} (G_1^{ab}\times 1)$. Hence, as $\rho|_{H}$ is surjective, $C_1C_2=\rho|_{H}^{-1} (G_1^{ab}\times G_2^{ab}) = H$.

\item \textsc{Part 3:} $H=K_1C_2$. Indeed, as $H = C_1 C_2 = C_1 (K_1 C_2)$, the second isomorphism theorem gives that
\[
H/K_1 C_2 \cong C_1/ C_1 \cap (K_1C_2). 
\]
Assume by contradiction that $H/K_1C_2$ is nontrivial; then $H/K_1C_2$ has a simple quotient $S$.
As $H/K_1C_2$ is a quotient of $H/C_2\cong G_2^{ab}$, $S$ is of order $p$. On the other hand, 
$C_1/ C_1 \cap (K_1C_2)$ is a quotient $C_1/K_1 \cong [G_1,G_1]$, which contradicts the assumption that $G$ satisfies condition-$E(p)$.

\item \textsc{Part 4:} $H=K_1K_2$. We argue in a similar fashion as in Part 3: As $H= K_1 C_2 = (K_1 K_2) C_2$, the second isomorphism theorem gives that
\[
H/K_1K_2 = C_2/ (C_2 \cap K_1K_2).
\]
Assume by contradiction that $H/K_1K_2$ is nontrivial, then it has a simple quotient  $S$.
Since $H/K_1K_2$ is a quotient of $H/K_2\cong G_2$ and $G_2$ satisfies condition-$E(p)$, the order of $S$ is $p$. On the other hand, $C_2/ (C_2 \cap K_1K_2)$ is quotient of $C_2/K_2\cong [G_2,G_2]$, which contradicts the assumption that $G_2$ satisfies condition-$E(p)$.

\item \textsc{Conclusion of the proof:}
Since $H=K_1K_2$ and $K_1\cap K_2 = 1$, we get that 
\[
H\cong K_2\times K_1 \cong H/K_1\times H/K_2 \cong G_1\times G_2,
\]
as needed.
\end{proof}

\begin{lemma}\label{lem:irr_M}
Let $p$ be a prime and for each $i=1,\ldots, n$ let $\phi_i\colon C_i\to \mathbb{P}^1_{\QQ}$ be a geometrically irreducible branched covering that is generically Galois with Galois group $G_i$. 
Let $D_i=C_i/\Phi(G_i)$, where $\Phi(G_i) = G_i^{p}[G_i,G_i]$. Assume that $G_i$ satisfies condition-$E(p)$ for all $i$ and that $\prod_{\PP^1_{\QQ}} D_i$ is geometrically irreducible. Then $\prod_{\PP^1_{\QQ}} C_i$ is geometrically irreducible.
\end{lemma}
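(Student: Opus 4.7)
The plan is to translate the assertion into a statement about the Galois group of a compositum of function fields over $\bar\QQ(T)$ and apply the group-theoretic Lemma~\ref{lem:grptheory}.

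First, I would set $L=\bar\QQ(T)$ and let $F_i$ be the function field of $C_i\times_\QQ\bar\QQ$ inside a fixed algebraic closure of $L$. Geometric irreducibility of $C_i$ combined with the generically Galois hypothesis imply that $F_i/L$ is Galois with group $G_i$. Let $F=F_1\cdots F_n$ be the compositum and set $H=\Gal(F/L)$. The restriction maps $\Gal(F/L)\to\Gal(F_i/L)$ give an embedding $H\hookrightarrow\prod_{i=1}^n G_i$ under which each projection $\pi_i\colon H\to G_i$ is surjective. By the standard dictionary between fiber products of generically Galois covers and their function fields, $\prod_{\PP^1_\QQ}C_i$ is geometrically irreducible precisely when $H=\prod_i G_i$, so this is the equality I need to establish.

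Next I would verify the second hypothesis of Lemma~\ref{lem:grptheory}, the surjectivity of $\psi|_H$ where $\psi\colon\prod_i G_i\to\prod_i G_i/\Phi(G_i)$ is the product of the quotient maps. The function field of $D_i\times_\QQ\bar\QQ$ is the fixed field $K_i:=F_i^{\Phi(G_i)}$, a Galois extension of $L$ with group $\bar G_i:=G_i/\Phi(G_i)$, and under the embedding $H\hookrightarrow\prod_i G_i$ one has $\Gal(F/K_i)=\pi_i^{-1}(\Phi(G_i))$. Hence the compositum $K_1\cdots K_n$ equals $F^{N}$ with $N=\bigcap_i\pi_i^{-1}(\Phi(G_i))=H\cap\ker\psi$, giving $\Gal(K_1\cdots K_n/L)\cong\psi(H)$. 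The hypothesis that $\prod_{\PP^1_\QQ}D_i$ is geometrically irreducible translates, by the same dictionary, into $\psi(H)=\prod_i\bar G_i$, i.e.\ the desired surjectivity of $\psi|_H$.

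With both surjectivity conditions in hand and each $G_i$ satisfying condition-$E(p)$, I would invoke Lemma~\ref{lem:grptheory} to conclude $H=\prod_i G_i$, which is exactly the geometric irreducibility of $\prod_{\PP^1_\QQ}C_i$. I do not anticipate any serious obstacle beyond the Galois-theoretic bookkeeping: the hard group-theoretic content has already been packaged into Lemma~\ref{lem:grptheory}, and what remains is the routine translation between geometric irreducibility of fiber products of generically Galois covers and linear disjointness of the corresponding function fields over $\bar\QQ(T)$.
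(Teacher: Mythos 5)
Your proposal is correct and follows essentially the same route as the paper: translate geometric irreducibility of the fiber products into statements about the compositum of function fields over $\bar\QQ(T)$, identify the function field of $D_i\times\bar\QQ$ with $F_i^{\Phi(G_i)}$, deduce surjectivity of $\psi|_H$ from the geometric irreducibility of $\prod_{\PP^1}D_i$, and then invoke Lemma~\ref{lem:grptheory} to conclude $H=\prod_i G_i$. The only cosmetic difference is that you compute $\Gal(K_1\cdots K_n/L)\cong\psi(H)$ directly via $N=H\cap\ker\psi$, whereas the paper phrases the same fact through a degree count $[E:\bar\QQ(T)]=\prod[G_i:\Phi(G_i)]$.
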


\begin{proof}
For each $i$, let $\bar{\QQ}(T) \subseteq E_i \subseteq F_i$ the function field extensions corresponding to the maps $\PP^1_{\bar{\QQ}}\leftarrow (D_i)_{\bar{\QQ}}  \leftarrow (C_i)_{\bar{\QQ}}$.
Since $(C_i)_{\bar{\QQ}}$ is irreducible by assumption, it follows that $(D_i)_{\bar{\QQ}}$ is also irreducible. Hence by Galois correspondence and since $\Phi(G_i)\lhd G_i$ it follows that these extensions are Galois with Galois groups 
\begin{align*}
\Gal(F_i/\bar{\QQ}(T))= G_i, && \Gal(F_i/E_i)=\Phi(G_i), && \Gal(E_i/\bar{\QQ}(T))\cong G_i/\Phi(G_i).
\end{align*}
Let $E = E_1\cdots E_n$ be the composition of $E_i$, $i=1,\ldots, n$. 
The assumption that $\prod_{\QQ(T)} D_i$ is absolutely irreducible, 
implies that 
\[
[E:\bar{\QQ}(T)]=\prod [E_i:\bar\QQ(T)]=\prod[G_i:\Phi(G_i)].
\]
Hence, $\Gal(E/\bar{\QQ}(T))\cong \prod_{i} G_i/\Phi(G_i)$. 
We put $F=F_1\cdots F_n$. We summarize the above  in Diagram~\ref{diag}.

\begin{figure}[H]\renewcommand{\figurename}{Diagram}
\[
\xymatrix@C40pt{
&E\ar@{.}[dl]_{\prod_i{G/\Phi(G_i)}} \ar@{-}[r]& F\\
\bar{\QQ}(T)\ar@{-}[r]_{G_i/\Phi(G_i)}&E_i\ar@{-}[r]_{\Phi(G_i)}\ar@{-}[u]&F_i\ar@{-}[u]
}
\]
\caption{Function Fields and Galois Groups}\label{diag}
\end{figure}

Let $H =\Gal(F_1F_2/\bar{\QQ}(T))$. Then $H$ embeds into $\prod_{i} G_i$ via the restriction maps; namely, $\sigma\mapsto (\sigma|_{F_i})_i$. 
The restriction of the projection onto the $j$th coordinate $\prod_{i} G_i \to G_j$ to $H$  is surjective for every $j$. Also, by Galois correspondence, the image of $H$ under the quotient map $\prod_i G_i \to \prod_{i} G_i/\Phi(G_i)$ is $\Gal(E/\bar{\QQ}(T)) = \prod_{i} G_i/\Phi(G_i)$. Thus the conditions of Lemma~\ref{lem:grptheory} are satisfied, so $H= \prod_{i} G_i$.  This implies that, $[F:\bar\QQ(T)] = \deg \phi_1\times_{\PP^1_{\QQ}}  \cdots  \times_{\PP^1_{\QQ}}  \phi_n$, so $C_1\times_{\PP^1_{\QQ}}\cdots \times_{\PP^1_{\QQ}}  C_n$ is geometrically irreducible.
\end{proof}

\subsection{The $E(p)$-Condition and Rational Rigid Tuples}
\label{sec_Rigid}
Let $G$ be a finite group. We say that a  $k$-tuple $\mathbf{g} =(g_1,\dots,g_k) \in G^{k}$ is a \emph{good generating $k$-tuple} for $G$ if $G$ is generated by $g_1,\ldots, g_k$ and $g_1\cdots g_k = 1$.
Two good generating $k$-tuples  $\mathbf{g} =(g_1,\dots,g_k)$ and $\mathbf{g}' =(g_1',\dots,g_k')$ for $G$ are \emph{semi-conjugate} if for every $1 \leq i \leq k$ there exists $h_i \in G$ such that $g_i' = h_i^{-1}g_ih_i$. We say that $\mathbf{g}$ and $\mathbf{g}'$ are \emph{conjugate} if there exists $h \in G$ such that $g_i' = h^{-1}g_ih$ for all $1 \leq i \leq k$.

Let $G$ be a finite group,  a $k$-tuple $\mathbf{g} =(g_1,\dots,g_k) \in G^{k}$ is called  \emph{rigid} if the following conditions hold:
\begin{enumerate}
\item\label{i:rigid:center} $G$ has a trivial center.
\item\label{i:rigid:gen}$\mathbf{g}$ is a good generating tuple.
\item\label{i:rigid:rig} Every  good generating $k$-tuple $\mathbf{g}'$ which is semi-conjugate to $\mathbf{g}$ is conjugate to $\mathbf{g}$.
\end{enumerate}
Recall that an element $g$ in a group $G$ is called \emph{rational} if for every integer $n$ which is relatively prime to the order of $G$, $g^{n}$ is conjugated to $g$.
A rigid  tuple is called \emph{rational rigid} if in addition:
\begin{enumerate}
\item[4.] \label{i:rigid:rational} Every $g_i$ is rational. 
\end{enumerate}

\begin{lemma}\label{l:rig:addone}
If $\mathbf{g} =(g_1,...,g_k)$ is a rational rigid $k$-tuple for $G$, then $\mathbf{g'} =(g_1,\dots,g_i,1,g_{i+1},\dots,g_k)$ is a rational rigid $k+1$-tuple.
\end{lemma}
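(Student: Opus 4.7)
The plan is to verify, one by one, the four defining conditions of a rational rigid tuple for $\mathbf{g}'$, using the corresponding properties of $\mathbf{g}$. Three of these are essentially immediate; only the rigidity condition~(\ref{i:rigid:rig}) requires an actual argument, and even there the idea is simply to ``delete the inserted $1$'' and reduce to the rigidity of $\mathbf{g}$.

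Conditions~(\ref{i:rigid:center}) and~(4) are painless: trivial center is a property of $G$ alone, and $1$ is rational in any finite group (since $1^n=1$), while the other entries of $\mathbf{g}'$ are rational by hypothesis. Condition~(\ref{i:rigid:gen}) is also direct: $\langle g_1,\dots,g_i,1,g_{i+1},\dots,g_k\rangle=\langle g_1,\dots,g_k\rangle=G$, and inserting $1$ into the product does not change the value $g_1\cdots g_k=1$.

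The main step is condition~(\ref{i:rigid:rig}). I would take an arbitrary good generating $(k+1)$-tuple $\mathbf{g}''=(g_1'',\dots,g_{k+1}'')$ which is semi-conjugate to $\mathbf{g}'$. The semi-conjugacy relation at the $(i+1)$-st coordinate forces $g_{i+1}''=h_{i+1}^{-1}\cdot 1\cdot h_{i+1}=1$. Deleting this trivial entry produces a $k$-tuple $\tilde{\mathbf{g}}=(g_1'',\dots,g_i'',g_{i+2}'',\dots,g_{k+1}'')$ whose product and generated subgroup coincide with those of $\mathbf{g}''$, so $\tilde{\mathbf{g}}$ is a good generating $k$-tuple, and it is semi-conjugate to $\mathbf{g}$ (with the same $h_j$'s, for $j\neq i+1$).

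Applying the rigidity of $\mathbf{g}$, there is a single $h\in G$ conjugating $\mathbf{g}$ to $\tilde{\mathbf{g}}$ simultaneously; since $h^{-1}\cdot 1\cdot h=1=g_{i+1}''$, the same $h$ conjugates $\mathbf{g}'$ to $\mathbf{g}''$, establishing~(\ref{i:rigid:rig}) for $\mathbf{g}'$. I do not foresee any real obstacle: the only place one needs to be mildly careful is to check that inserting or removing an identity entry preserves the ``good generating'' property, which is immediate from the definitions.
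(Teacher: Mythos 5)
Your argument is correct and is exactly the verification the authors had in mind; the paper simply writes ``Clear.''\ for this lemma. You have spelled out the only nontrivial point — that a semi-conjugate good generating $(k+1)$-tuple is forced to have $1$ in the inserted slot, so deleting it reduces to rigidity of $\mathbf{g}$ — and nothing more needs to be said.
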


\begin{proof}
Clear. 
\end{proof}

\begin{lemma}\label{l:rig:prod}
Let $G$ and $H$ be finite groups. Let $\mathbf{g} =(g_1,\ldots,g_k)$ be a rational rigid $k$-tuple for $G$ and 
$\mathbf{h} =(h_1,...,h_k)$ be a rational rigid $k$-tuple for $H$.  Assume that the collection of elements $(g_i,h_i) \in G \times H$ generates $G\times H$. Then $\mathbf{g} \times \mathbf{h} = ((g_1,h_1),\dots,(g_k,h_k)) \in (G\times H)^k$ is a rational rigid $k$-tuple for $G \times H$.
\end{lemma}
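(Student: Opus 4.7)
The plan is to verify each of the four defining conditions of a rational rigid tuple for the pair $\mathbf{g}\times\mathbf{h}$ in $G\times H$, exploiting that most conditions pass through the product via the two projections $\pi_G\colon G\times H\to G$ and $\pi_H\colon G\times H\to H$.

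First I would dispose of the easy items. Conditions \ref{i:rigid:center} and \ref{i:rigid:gen} are essentially formal: $Z(G\times H)=Z(G)\times Z(H)=1$ since both $\mathbf{g}$ and $\mathbf{h}$ are rigid; the generation requirement is exactly the hypothesis, and $\prod_i(g_i,h_i)=(\prod_i g_i,\prod_i h_i)=(1,1)$ coordinate-wise. For condition~4 (rationality), note $|G\times H|=|G|\cdot|H|$, so an integer $n$ coprime to $|G\times H|$ is coprime to both $|G|$ and $|H|$; hence there exist $a_i\in G$, $b_i\in H$ with $a_i^{-1}g_ia_i=g_i^n$ and $b_i^{-1}h_ib_i=h_i^n$, and conjugation by $(a_i,b_i)$ sends $(g_i,h_i)$ to $(g_i,h_i)^n$.

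The substantive step is condition~\ref{i:rigid:rig} (rigidity). Let $\mathbf{k}=((x_1,y_1),\ldots,(x_k,y_k))$ be any good generating $k$-tuple for $G\times H$ that is semi-conjugate to $\mathbf{g}\times\mathbf{h}$; so there are $(u_i,v_i)\in G\times H$ with $(x_i,y_i)=(u_i^{-1}g_iu_i,v_i^{-1}h_iv_i)$. Applying $\pi_G$: the tuple $(x_1,\ldots,x_k)$ is a good generating tuple for $G$ (surjectivity of $\pi_G$ preserves generation, and the product relation survives) and each $x_i=u_i^{-1}g_iu_i$, so it is semi-conjugate to $\mathbf{g}$. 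By rigidity of $\mathbf{g}$ in $G$, there exists a single $a\in G$ with $x_i=a^{-1}g_ia$ for all $i$. Symmetrically, applying $\pi_H$ and using rigidity of $\mathbf{h}$ in $H$, there exists a single $b\in H$ with $y_i=b^{-1}h_ib$ for all $i$. Then conjugation by the single element $(a,b)\in G\times H$ sends $(g_i,h_i)$ to $(x_i,y_i)$ for all $i$, so $\mathbf{k}$ is conjugate to $\mathbf{g}\times\mathbf{h}$.

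I do not anticipate any real obstacle — the proof is essentially formal once one passes to the two projections, and the main conceptual content is simply that rigidity is preserved by projections because the projections carry good generating tuples to good generating tuples and semi-conjugacy to semi-conjugacy.
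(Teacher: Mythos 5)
Your proof is correct and follows essentially the same route as the paper's: dispose of center, generation, product relation, and rationality as formal, then for rigidity project a semi-conjugate good generating tuple onto $G$ and $H$, invoke rigidity in each factor to get single conjugating elements $a\in G$, $b\in H$, and conjugate by $(a,b)$. The only difference is that you spell out the details (why the projections remain good generating tuples, why rationality descends to each factor) that the paper leaves implicit.
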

\begin{proof}
The rationality is clear. 
Condition~\ref{i:rigid:center} is clear since the center of a product is the product of centers. 
Condition~\ref{i:rigid:gen} holds true by assumption. 

Hence it suffices to show Condition~\ref{i:rigid:rig}: Indeed. let $\mathbf{g}' \times \mathbf{h}'  \in (G \times H)^k$  be a  good generating tuple which is semi-conjugate to $\mathbf{g}\times \mathbf{h}$. Then 
$\mathbf{g}'$ is a  good generating tuple which is semi-conjugate to $\mathbf{g}$ and $\mathbf{h}'$ is a  good generating tuple which is semi-conjugate to $\mathbf{h}$. Thus,  $\mathbf{g}'$ is conjugate to $\mathbf{g}$ and $\mathbf{h}'$ is conjugate to $\mathbf{h}$. This implies that $\mathbf{g}' \times \mathbf{h}'$ is conjugate to $\mathbf{g} \times \mathbf{h}$.
\end{proof}

\begin{proposition}\label{prop:rig:S_p}
Let $G_1$,$G_2$ be groups satisfying the $E(p)$-condition. Assume that $G_i$ admits a rational rigid $k_i$-tuple for each $i=1,2$. Let $d_i=d(G_i^{ab})$. Then $G_1
\times G_2 $   admits a   rational rigid $s$-tuple, for 
s = $d_1+d_2 + \max(k_1-d_1,k_2-d_2)$
\end{proposition}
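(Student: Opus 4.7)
The plan is to build rational rigid $s$-tuples $\tilde{\mathbf g}$ and $\tilde{\mathbf h}$ for $G_1$ and $G_2$, each obtained from the given tuples by inserting the identity into suitable positions, so that the coordinate-wise pairing $(\tilde g_i,\tilde h_i)_{i=1}^{s}$ generates $G_1\times G_2$. Lemma~\ref{l:rig:addone} will guarantee that inserting $1$'s preserves rational rigidity, and Lemma~\ref{l:rig:prod} will then deliver the desired rational rigid $s$-tuple for $G_1\times G_2$. All the work is thus in arranging the identities so the pairing generates the product, and in checking that this can be done with exactly $s = d_1 + d_2 + \max(k_1-d_1,k_2-d_2)$ positions.

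Without loss of generality assume $k_1-d_1\geq k_2-d_2$, so $s=k_1+d_2$. Because $G_2$ satisfies condition-$E(p)$, every nontrivial abelian quotient of $G_2$ is a $p$-group, so $G_2^{ab}$ itself is a $p$-group and $G_2/\Phi(G_2)=G_2^{ab}/p$ is an $\mathbb F_p$-vector space of dimension $d_2$. Since $\mathbf h$ generates $G_2$, its images $\bar h_1,\dots,\bar h_{k_2}$ span this space, so we can pick indices $1\leq j_1<\cdots<j_{d_2}\leq k_2$ for which $\bar h_{j_1},\dots,\bar h_{j_{d_2}}$ is a basis. Define $\tilde{\mathbf g}$ of length $s$ by placing $1_{G_1}$ at positions $j_1,\dots,j_{d_2}$ and placing $g_1,\dots,g_{k_1}$ at the remaining $k_1$ positions in their original order; define $\tilde{\mathbf h}=(h_1,\dots,h_{k_2},1,\dots,1)$ of length $s$, with $s-k_2$ trailing ones. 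Each is the original tuple with identities inserted, hence rational rigid by Lemma~\ref{l:rig:addone}.

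To apply Lemma~\ref{l:rig:prod} it remains to show that the subgroup $H\leq G_1\times G_2$ generated by the paired entries is the full product. Projection onto each factor is clearly surjective, so by Lemma~\ref{lem:grptheory} (for $n=2$) it suffices to verify that the images of the paired tuple span $G_1/\Phi(G_1)\times G_2/\Phi(G_2)$. At each position $j_r$ the pair is $(1,h_{j_r})$, contributing $(0,\bar h_{j_r})$; by choice of the $j_r$'s these $d_2$ vectors span $\{0\}\times G_2/\Phi(G_2)$. Modulo this subspace, the images contributed by the remaining $k_1$ positions project in the first coordinate to $\bar g_1,\dots,\bar g_{k_1}$, which span $G_1/\Phi(G_1)$ since $\mathbf g$ generates $G_1$. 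Hence the total span is $G_1/\Phi(G_1)\times G_2/\Phi(G_2)$, $H=G_1\times G_2$, and Lemma~\ref{l:rig:prod} finishes the construction.

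The main obstacle is really a bookkeeping one: one must keep the $g_i$'s and $h_i$'s in their original cyclic order (so that the inserted-identity tuples still have product $1$) while placing exactly $d_2$ identities in $\tilde{\mathbf g}$ at positions already occupied by a spanning subfamily of $h$'s, and the count $s=k_1+d_2$ is precisely the minimum length for which this can be arranged under the WLOG assumption (indeed $s\geq k_2$ follows from $k_1\geq d_1+(k_2-d_2)$). The symmetric case $k_2-d_2\geq k_1-d_1$ is handled by swapping the roles of $G_1$ and $G_2$, which gives $s=k_2+d_1$ and the same argument verbatim; together these cover the stated bound $s=d_1+d_2+\max(k_1-d_1,k_2-d_2)$.
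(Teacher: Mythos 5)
Your proof is correct and follows essentially the same strategy as the paper: pad both tuples with identities so that at each position where one factor has a $1$ the other contributes to a Frattini-quotient spanning set, then invoke Lemma~\ref{l:rig:addone}, Lemma~\ref{lem:grptheory}, and Lemma~\ref{l:rig:prod} in that order. The only difference is cosmetic: the paper first applies Lemma~\ref{l:rig:addone} to reduce to the symmetric case $k_1-d_1=k_2-d_2$ and then chooses spanning index sets $A_1\subset\{1,\dots,k_1\}$ and $A_2\subset\{1,\dots,k_2\}$ on both sides, whereas you keep the setup asymmetric, choose a spanning subfamily only inside $\mathbf h$, and verify the Frattini condition by the quotient-then-project argument. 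Both are valid and the bookkeeping is the same.
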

\begin{proof}
Since $G_i$ is $E(p)$ we have that $G_i^{ab}$ is a $p$-group and $G_i/\Phi(G_i) = G_i/G_i^p[G_i,G_i] = (\mathbb{Z}/p\mathbb{Z})^{d_i}$.
Let 
\[
\rho_i : G_i \to  G_i/\Phi(G_i) = (\mathbb{Z}/p\mathbb{Z})^{d_i}
\]
be the quotient map.
By Lemma~\ref{l:rig:addone}, we may assume w.l.o.g.\ that $r:=k_1-d_1=k_2-d_2$, so $s= d_1+d_2+r$.
let $\mathbf{g}^{(i)} =(g^{(i)}_1,...,g^{(i)}_{k_i})$ be a rational rigid $k_i$-tuple for $G_i$.
Let $A_i \subset \{1,\dots,k_i\}$ be a set of size $d_i=|A_i|$ such that 
$\{\rho_i(g^{(i)}_a):a\in A_i\}$ generates $G_i/\Phi(G_i)$ and $B_i=\{1,\ldots, k_i\}\smallsetminus A_i$ the complement. Write the elements of $B_i$ as 
\[
b_{i,1}<b_{i,2} < \ldots < b_{i,r}.
\]
Consider all the pairs 
\[
v_a=(g^{(1)}_{a},1), \quad v'_{a'} = (1,g^{(2)}_{a'}), \quad  w_{j} = (g^{(1)}_{b_{1,j}},g^{(2)}_{b_{2,j}}),
\]
for $a\in A_1$, $a'\in A_2$, and $j=1, \ldots, r$. One may order them such that the resulting $s$-tuple $V$ of elements in $(G_1\times G_2)^s$ has the property that the projection to each of the coordinates $G_i$ gives the original tuple diluted by $1$'s. 

Let $H\leq G_1\times G_2$ be the subgroup generated by $V$. 
By Lemma~\ref{l:rig:prod}, it suffices to show that $H=G_1\times G_2$. Indeed, on the one hand, $H$ maps onto each of the $G_i$'s. On the other hand, by the construction of $V$, $(\rho_1\times\rho_2) (V)$ contains a basis of $G_1/\Phi(G_1) \times G_2/\Phi(G_2)$, so by Lemma~\ref{lem:grptheory}, $H=G_1\times G_2$, as needed for rigidity. The rationality is immediate.  
\end{proof}

Applying the previous proposition repeatedly gives:
\begin{corollary}\label{eq:corrigid}
Let $G$ be a group satisfying the $E(p)$-condition, $d=d(G^{ab})$, and $n\geq 1$. Assume that $G$ admits a rational rigid $r$-tuple. Then $G^n$ admits a rational rigid $s$-tuple, for 
$s = (n-1)d + r$.
\end{corollary}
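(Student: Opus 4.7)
The plan is to prove the corollary by induction on $n$, with the inductive step being a direct application of Proposition~\ref{prop:rig:S_p} to the factorization $G^n = G^{n-1} \times G$. The base case $n = 1$ is the hypothesis. For the inductive step, I assume $G^{n-1}$ admits a rational rigid $k_1$-tuple with $k_1 = (n-2)d + r$, and I aim to construct a rational rigid $s$-tuple for $G^n$ with $s = (n-1)d + r$.

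Before invoking Proposition~\ref{prop:rig:S_p} with $G_1 = G^{n-1}$ and $G_2 = G$, I would verify two standing hypotheses. First, $G^{n-1}$ satisfies the $E(p)$-condition, since the class of $E(p)$-groups is closed under direct products (one of the basic properties listed after Definition~\ref{def:Sp}). Second, I need to identify the invariants $d_1 = d((G^{n-1})^{ab})$ and $d_2 = d(G^{ab}) = d$. Since $G$ is $E(p)$, the abelianization $G^{ab}$ is a finite abelian $p$-group, so by the Burnside basis theorem the number of generators of a direct product of finite abelian $p$-groups is additive; hence $d_1 = d((G^{ab})^{n-1}) = (n-1)d$. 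Note also that $d \leq r$ automatically, because the rational rigid $r$-tuple for $G$ must generate $G$, and therefore its image generates $G/\Phi(G) \cong (\ZZ/p\ZZ)^d$.

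With these identifications in place, Proposition~\ref{prop:rig:S_p} yields a rational rigid $s$-tuple for $G^n$ of length
\[
s = d_1 + d_2 + \max(k_1 - d_1,\; k_2 - d_2) = (n-1)d + d + \max\bigl((n-2)d + r - (n-1)d,\; r - d\bigr).
\]
Both arguments of the maximum equal $r - d$, which is non-negative by the previous paragraph, so
\[
s = (n-1)d + d + (r - d) = (n-1)d + r,
\]
as required. This closes the induction.

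There is essentially no obstacle: the entire argument is a bookkeeping exercise built on top of Proposition~\ref{prop:rig:S_p}. The only mildly delicate points are the two structural facts used above---that the $E(p)$-condition is preserved under direct products, and that $d(\cdot)$ is additive for direct products of abelian $p$-groups---both of which are already recorded (explicitly or implicitly) among the examples following Definition~\ref{def:Sp}.
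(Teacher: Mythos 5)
Your proof is correct and follows the same route the paper intends: the paper's proof is simply the remark that the corollary follows by applying Proposition~\ref{prop:rig:S_p} repeatedly, and your induction (with the factorization $G^n = G^{n-1}\times G$, the closure of $E(p)$ under products, and the additivity of $d(\cdot)$ for abelian $p$-groups) is a careful write-up of exactly that iteration.
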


\section{The Minimal Ramification Problem}
In this section we prove the asymptotic inequalities~\eqref{eq:generalG}-\eqref{rigidrigidrigid} basing on the methods developed so far.

\begin{definition}\label{def:MMM}
Let $G$ be a finite group, $U$ a finite set of primes of $\QQ$ and $\bfd=(d_1,\ldots,d_r)$ a tuple of positive integers.
We say that $G$ has $(U;\bfd)$ realization if there exists a geometrically irreducible  branched covering $\phi\colon C\to \PP^1_\QQ$ such that
\begin{itemize}
\item $\QQ(C)/\QQ(\PP^1)$ is Galois with Galois group $G$,
\item $U(\phi)\subseteq U$,
\item $\RamG(\phi) = \{(D_1), \ldots, (D_r)\}$ with $D_i(t,s)\in \ZZ[t,s]$ homogenous of degree $d_i$.
\end{itemize}
\end{definition}

\begin{proposition}\label{prop:formulation for MRP}
Let $G$ be a finite group that has a $(U;\bfd)$ realization and let $L/\QQ$ be a finite extension. Then there exists a Galois extension $N/\QQ$ with Galois group $G$ such that $N\cap L=\QQ$ and $\#\Ram_U(N/\QQ) \leq B(\mathbf{d})$, where $B(\mathbf{d})$ is defined in \eqref{eq:Bd}.
In particular,
\[
m(G)\leq  B(\bfd)+\#U .
\]
\end{proposition}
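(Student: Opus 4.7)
The plan is to produce the extension $N$ as a specialization $A^{\phi}_{[a:b]}$ of the $(U;\bfd)$-realization $\phi\colon C\to \PP^1_\QQ$ of $G$, choosing $[a:b]$ in the intersection of the non-thin set provided by Theorem~\ref{thm:main} with the complement of a thin set that encodes the failure of Galois irreducibility and linear disjointness from $L$.

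First, let $\phi\colon C\to \PP^1_\QQ$ be a branched covering witnessing the $(U;\bfd)$-realization, with branch locus $\RamG(\phi)=\{(D_1),\ldots,(D_r)\}$ and $\deg D_i=d_i$. By Theorem~\ref{thm:main}, the set
\[
\Omega=\{[a:b]\in\PP^1(\QQ) : \#\Ram_U(A^{\phi}_{[a:b]}/\QQ)\le B(D_1,\ldots,D_r)\}
\]
is not thin, and by Definition~\ref{def:B} we have $B(D_1,\ldots,D_r)\le B(\bfd)$. Since $U(\phi)\subseteq U$, every $[a:b]\in\Omega$ whose specialization $A^{\phi}_{[a:b]}$ is a field $N$ will satisfy $\#\Ram_U(N/\QQ)\le B(\bfd)$, and so $\#\Ram(N/\QQ)\le B(\bfd)+\#U$, which will yield the bound on $m(G)$.

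Next I would control irreducibility and linear disjointness via thin sets. Since $\phi$ is generically Galois with group $G$, the set $Z_1$ of $[a:b]\in\PP^1(\QQ)$ for which $A^{\phi}_{[a:b]}$ fails to be a field is exactly the image on $\PP^1(\QQ)$ of $C_0(\QQ)$ for any proper intermediate covering $C_0\to \PP^1_\QQ$; hence $Z_1$ is thin (cf.\ \cite[\S3.3]{SerreTopics}). To enforce $N\cap L=\QQ$, pick a primitive element of $L$ and form the fiber product $\phi_L\colon C\times_{\PP^1_\QQ} C_L\to \PP^1_\QQ$, where $C_L\to \PP^1_\QQ$ is a covering whose function field contains $L(T)$. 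The $\QQ$-irreducible components of $C\times_{\PP^1_\QQ} C_L$ of degree strictly less than $[L:\QQ]\cdot|G|$ correspond to algebraic relations between $N$ and $L$; the set $Z_2$ of $[a:b]$ such that $A^{\phi_L}_{[a:b]}$ is reducible into components of strictly smaller degree is again thin by Hilbert's irreducibility theorem. Set $Z=Z_1\cup Z_2$, which is thin.

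Finally, since $\Omega$ is not thin and $Z$ is thin, pick $[a:b]\in \Omega\smallsetminus Z$. Then $N:=A^{\phi}_{[a:b]}$ is a Galois field extension of $\QQ$ with group $G$, $N\cap L=\QQ$, and
\[
\#\Ram(N/\QQ)\le \#U+\#\Ram_U(N/\QQ)\le \#U+B(\bfd),
\]
proving both assertions.

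The main subtlety I expect is verifying carefully that the bad set $Z_2$ enforcing $N\cap L=\QQ$ really is thin, i.e.\ realizing it as a finite union of images of $\QQ$-points of proper coverings of $\PP^1_\QQ$. This is routine once one passes to the Galois closure and uses that the compositum $N\cdot L$ specializes from a Galois cover whose Galois group contains $\Gal(L/\QQ)\times G$ exactly when $N$ and $L$ are linearly disjoint; the standard form of Hilbert's irreducibility theorem then handles the remaining thinness claim.
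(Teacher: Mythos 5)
Your proof is correct and takes essentially the same approach as the paper's: apply Theorem~\ref{thm:main} to obtain a non-thin set $\Omega$, then remove a thin set that forces the specialization to be a field linearly disjoint from $L$, and pick a point in the difference. The paper packages both of your conditions ($Z_1$ and $Z_2$) into the single thin set $Z=\{[a:b] : A^{\phi}_{[a:b]}\otimes L \text{ is not a field}\}$, citing \cite[Corollary~12.2.3]{FriedJarden2009} for its thinness, which is tidier than your more informal fiber-product construction of $Z_2$ but amounts to the same argument.
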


\begin{proof}
Let $\phi\colon C\to \PP^1(\QQ)$ be a branched covering from Definition~\ref{def:MMM} and let $Z$ be the set of $[a:b]\in \PP^1(\QQ)$ such that $A^{\phi}_{[a:b]}\otimes L$ is not a field. Then $Z$ is thin (see~\cite[Corollary~12.2.3]{FriedJarden2009} and note that a subset of $\QQ$ is thin if and only if its complement contains a Hilbert set by Lemma~13.1.2 in \emph{loc.cit.}). By Theorem~\ref{thm:main} and by \eqref{eq:Bd}, there exists $[a:b]\in\PP^1(\QQ)\smallsetminus Z$ such that for $N=A^{\phi}_{[a:b]}$ we have
\[
\#\Ram_U(N/\QQ) \leq B(D_1,\ldots, D_r)\leq B(\mathbf{d}).
\] 
As $N\otimes L$ is a field, it follows that $N$ is a field that is linearly disjoint from $L$, and so $N\cap L=\QQ$. Clearly $N/\QQ$ is Galois with Galois group $G$.
\end{proof}

\begin{proof}[Proof of~\eqref{eq:generalG}] By assumption $G$ has a $(U;\bfd)$ realization for some $U,\bfd$. We claim that we can realize $G^n$ with at most $B(\bfd) n$ ramified primes outside of $U$. And indeed, assume by induction that $G^{n-1} = \Gal(L/\QQ)$ and  $\#\Ram_U(L/\QQ)\leq B(\bfd) (n-1)$. Then by Proposition~\ref{prop:formulation for MRP} we have a Galois extension $N/\QQ$ with Galois group $G$ such that $N\cap L=\QQ$ and $\#\Ram_U(N/\QQ) \leq B(\mathbf{d})$, so $NL/\QQ$ is a Galois extension with Galois group $G^{n}=G \times G^{n-1}$ and
\[
\begin{split}
\#\Ram_U(NL/\QQ) &= \#\Ram_U(N/\QQ) + \#\Ram_U(L/\QQ) \\ & \leq B(\bfd)  + B(\bfd) (n-1)=B(\bfd)n.
\end{split}
\]
In particular we have
\begin{equation}\label{eq:numberofram}
m(G^n) \leq B(\bfd) n + \#U = O(n),
\end{equation}
as needed.
\end{proof}

We remark that if $G$ has a $(U;\bfd)$ realization with $\bfd = \mathbf{1}_r = \overbrace{(1, \ldots, 1)}^{\mbox{\tiny $r$ times}}$, then since $B(\bfd)\leq r$ by~\eqref{eq:B-GTZ-bd}, the inequality  \eqref{eq:numberofram} immediately gives that
\[
m(G^n) \leq rn + O(1).
\]
However this is not sufficient for \eqref{eq:generalGrat}, as we need to reduce $r$ to $r-1$. So to prove \eqref{eq:generalGrat} one requires an extra construction:

\begin{proposition}\label{prop:Gn}
Let $\mathbf{1}_r = (1,\ldots, 1)$ be an $r$-tuples of ones, let $G\neq 1$ be a finite group having a $(U;\bfd)$ realization, and let $n\geq 1$ be an integer. Then $G^n$ has a
$(U, \mathbf{1}_R)$ realization, where $R=(r-1)n+1$.
\end{proposition}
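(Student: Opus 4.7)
I focus on the case $\bfd = \mathbf{1}_r$, which is the setting Lemma~\ref{lem:uram_add} is designed for and the way the proposition is used in deducing \eqref{eq:generalGrat}. The plan is to realize $G^n$ as the Galois group of a geometrically irreducible fiber product of $n$ additive twists of $\phi$, arranged so that all twists share $\infty$ as a branch point but are otherwise in general position. First, after replacing $\phi$ by $\phi^g$ for an element $g\in\GL_2(\QQ)$ sending some rational branch point of $\phi$ to $\infty$---which preserves $G$, $U(\phi)$, and the number and rationality of the branch points---we may assume $\infty\in\RamG(\phi)$, so that $\phi$ has rational branch locus consisting of $\infty$ together with $r-1$ finite rational points.

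Next, apply Lemma~\ref{lem:uram_add} to $\phi$ and $n$ to obtain integers $k_1,\ldots,k_n$ such that, setting $\phi_i:=\phi^{k_i^+}$ and $\Phi:=\prod_{\PP^1}\phi_i$, we have $\RamG(\phi_i)\cap\RamG(\phi_j)\subseteq\{\infty\}$ for $i\ne j$ and $U_\infty(\Phi)\subseteq U_\infty(\phi)$. Since each $g_{k_i^+}$ fixes $\infty$ and acts on the affine line by integer translation, every $\phi_i$ is a geometrically irreducible $G$-Galois cover whose branch locus is $\{\infty\}$ together with $r-1$ distinct finite rational points (the translated finite branch points of $\phi$). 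Iterating Lemma~\ref{lem:irr_one_br}---at the $i$-th step, the branch locus of the partial fiber product $\phi_1\times_{\PP^1}\cdots\times_{\PP^1}\phi_{i-1}$ meets $\RamG(\phi_i)$ precisely in $\{\infty\}$---shows that $\Phi$ is geometrically irreducible. Since each factor is $G$-Galois, the compositum of the (now linearly disjoint) $G$-Galois function fields is Galois with group $G^n$.

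The branch locus of $\Phi$ equals $\{\infty\}\sqcup\bigsqcup_{i=1}^n(\RamG(\phi_i)\smallsetminus\{\infty\})$, consisting of $1+n(r-1)=R$ rational points. Combined with $U(\Phi)\subseteq U(\phi)\cup\{\infty\}\subseteq U$ (enlarging $U$ by the single prime $\infty$ if necessary, which is harmless for the downstream $O(1)$-type estimates), this produces the desired $(U;\mathbf{1}_R)$-realization of $G^n$. The principal technical point, which is already entirely contained in Lemma~\ref{lem:uram_add}, is choosing the $k_i$'s to simultaneously ensure branch-locus disjointness (by spacing the $k_i$'s wider than the diameter of the finite branch locus) and universal-ramification control (by taking the $k_i$'s in a suitable multiplicative class built from $U(\phi)$ and the small primes); the rest is bookkeeping on branch divisors and Galois groups.
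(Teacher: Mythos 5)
Your argument follows the paper's proof step by step: normalize so that $\infty$ is a branch point, translate by $k_i$'s obtained from the lemma on additive twists, use Abhyankar's lemma for the branch locus, iterate Lemma~\ref{lem:irr_one_br} for geometric irreducibility, and read off the Galois group from linear disjointness. The one cosmetic difference is that you invoke Lemma~\ref{lem:uram_add} as a black box, whereas the paper applies Lemma~\ref{lem:uram_S_add} (with $S=V(\phi)\cup\{p\le R\}\cup\{\infty\}$) and then derives $U(\hat\phi)\subseteq S$ by hand via \eqref{eq:Binvariant}, \eqref{eq:Vinvariant}, \eqref{eq:abhy}, and \eqref{eq:ubdbyvb} --- but Lemma~\ref{lem:uram_add} is proved in the paper precisely by that same calculation, so this is packaging, not a new route. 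Your explicit caveat about possibly having to enlarge $U$ by $\{\infty\}$ (since Lemma~\ref{lem:uram_add} only controls $U_\infty$) is a fair and honest observation: the paper asserts the inclusion $U(\hat\phi)\cap S\subseteq U(\phi)$ at $\infty$ via Lemma~\ref{lem:uram_S_add}(2), whose proof in turn leans on Lemma~\ref{lem:uram-1}(2), stated only for finite primes, so the archimedean case is glossed over there as well; in any event, as you note, the extra prime is absorbed in the $O(1)$ in the downstream estimates.
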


\begin{proof}
Let $\phi\colon C\to \PP^1$ be the $(U;\bfd)$ realization of $G$ with $\RamG(\phi)=\{(D_1), \ldots, (D_r)\}$, $\deg D_i=1$. Since $G\neq 1$, the morphism $\phi$ must be ramified, so $r\geq 1$. Without loss of generality we may assume that $\infty$ is a branch point (otherwise we compose $\phi$ with a matrix in $\GL_2(\QQ)$ that maps a branch point to infinity).

Put $S = V(\phi)\cup \{p\leq R\}\cup\{\infty\} $, where $V(\phi)$ is the set of vertically ramified primes of a model of $\phi$ over $\mathbb{Z}$. We apply Lemma~\ref{lem:uram_S_add} to get integers $k_1, \ldots, k_n$ satisfying \eqref{lem:uram_S_add:1} and \eqref{lem:uram_S_add:2}.
Put $\hat\phi=\prod_{\PP^1} \phi^{k_{i+}} \colon \hat{C} \to \PP^1_{\QQ}$. 

By \eqref{branchpoints}, $\RamG(\phi^{k_i+}) = \{\infty, p_{1,i}, \ldots, p_{r-1,i}\}$. By \eqref{lem:uram_S_add:1}, $p_{j,i}\neq p_{j',i'}$ for all $(j,i)\neq (j',i')$. By Abhyankar's lemma, we conclude that $\RamG(\hat\phi) = \{\infty, p_{1,1}, \ldots, p_{r-1,n}\}$. In particular, $\hat\phi$ has exactly $R$ branch points which are all $\QQ$-rational. 

The conditions of Lemma~\ref{lem:irr_one_br} are satisfied by \eqref{lem:uram_S_add:1}, thus the curve $\hat C$ is geometrically irreducible. This in particular implies that the extensions $E_i/\QQ(\PP^1)$ defined by $\phi^{k_{i+}}$  are linearly disjoint Galois extensions of $\QQ(\PP^1)$, and so the Galois group of $\QQ(\hat C)=\prod E_i$ over $\QQ(\PP^1)$ is the direct product of the Galois groups of the extensions; i.e., $G^n$.   

By \eqref{eq:Vinvariant}, $V(\phi^{k_{i+}}) = V(\phi)$; so by \eqref{eq:abhy} we have 
\begin{equation}\label{eq:Vhatphi}
V(\hat\phi)=V(\phi)\subseteq S.
\end{equation}
By \eqref{eq:Binvariant}, $B(\hat\phi) \subseteq \{p\leq R\} \subseteq S$. 
Together with \eqref{eq:Vhatphi} and  \eqref{eq:ubdbyvb} this gives that 
\[
U(\hat\phi) \subseteq  S.
\]
Since $U(\phi)\subseteq U$ and by \eqref{lem:uram_S_add:2} we conclude that 
\[
U(\hat\phi) = U(\hat\phi)\cap S \subseteq U(\phi)\cap S \subseteq U,
\]
and so $\hat\phi$ is a $(U, \mathbf{1}_{R})$ realization of $G^n$, as needed.  
\end{proof}

\begin{proof}[Proof of~\eqref{eq:generalGrat}]
Assume $G$ has a $(U;\mathbf{1}_r)$ realization. Then by Proposition~\ref{prop:Gn}, 
$G^n$ has a $(U;\mathbf{1}_R)$, $R=(r-1)n+1$ realization. By Proposition~\ref{prop:formulation for MRP} and the bound \eqref{eq:B-GTZ-bd} we get  
\[
m(G^n) \leq \#U + B(\mathbf{1}_R) \leq R + \#U \leq (r-1)n + \#U + 1 =(r-1)n +O(1).
\]
This finishes the proof. 
\end{proof}

Next we prove \eqref{prime_quotient}, which reduces the number of ramification to $(r-2)n$ under certain group theoretical conditions.

\begin{proof}[Proof of \eqref{prime_quotient}]
Let $\phi\colon C\to \PP^1_\QQ$ be a non-constant map of smooth connected projective $\QQ$-curves that is generically Galois with group $G$. Assume that the branch locus consists of $r$ rational points. 
We assume that $[G,G]$ is simple non-abelian, $d=d(G^{ab})\leq r-2$, and that there exists a prime number $p$ such that every maximal normal subgroup has index $p$. 
This implies that $d(G^{ab})$ is a $p$-group, and that $G$ satisfies condition $E(p)$.
We note that in this case the Frattini quotient of $G^{ab}$ is $G/M(G)\cong (\mathbb{Z}/p\mathbb{Z})^d$ with $M(G) = G^p[G,G]$; and thus a subgroup $H\leq G^n$ maps onto $(G^{ab})^n$  if and only if it maps onto $(G/M(G))^n$.

We let $C_M=C/M(G)$; so $\phi_M\colon C_M\to G$ is Galois with Galois group $(\mathbb{Z}/p\mathbb{Z})^d$. Choose $d$ branch points $x_1,\ldots, x_d$, such that the inertia groups above the $x_i$'s generate $(\mathbb{Z}/p\mathbb{Z})^d$. By assumption, there exist at least two other branch points $y_1,y_2$. By applying a Mobius transformation, we may assume w.l.o.g.\ that $y_1=[0:1]$ and $y_2=[1:0]$. 

We pick an auxiliary prime $q$. By Lemma~\ref{lem:uram_mult} there exist $k_1,\ldots, k_n$ such that if we write $C_i=C$ and $\phi_i=\phi^{q^{k_i}_\times}\colon C_i\to \PP^1_{\QQ}$, 
then we have (for $i\neq j$)
\[
\RamG(\phi_i)\cap \RamG(\phi_j) \subseteq \{0,\infty\} \quad \mbox{and}\quad U_{\infty}(\prod_{\PP^1} \phi_i) \subseteq U_{\infty}(\phi)\cup \{q\}.
\]
Let $F_i/\QQ(x)$ be the function field extension corresponding to $\phi_i\colon C\to \PP^1_{\QQ}$, $i=1,\ldots, n$. Then $G\cong\Gal(F_i/\QQ(x))$; denote by $L_i$ and $L_i'$ the fixed fields of $[G,G]$ and $G^p[G,G]$ (respectively) in $F_i$. Since each $\Gal(L_i'/\QQ(x))$ is generated by the inertia groups over distinct points, the $L_i'$ are linearly disjoint over $\QQ(x)$. 
By Lemma~\ref{lem:irr_M}, 
$\hat{C}=\prod_{\PP^1} C_i$ is geometrically irreducible. Now, as we chose the $k_i$ as in Lemma~\ref{lem:uram_mult}, we have that $U_{\infty}(\prod_{\PP^1}\phi_i) \subseteq U_{\infty}(\phi)\cup\{q\}$. By construction, the  branch locus of $\hat\phi$ consists of $(r-2)n+2$ rational branch points. So, if we put $U=U_{\infty}(\phi)\cup\{q\}\cup \{\infty\}$ and $s=(r-2)n+2$, we have obtained a 
\[
(U;\mathbf{1}_{s})
\]
realization of $G^n$. By \eqref{eq:B-GTZ-bd} and Proposition~\ref{prop:formulation for MRP}, 
\[
m(G^n)\leq s +\#U \leq (r-2)n + \#U_{\infty}(\phi) + 4 = (r-2) n +O(1),
\] 
which proves \eqref{prime_quotient}.
\end{proof}

Now we consider the special case $G=S_m$ and we prove \eqref{SN-realization}, that is $m(S_m^n)\leq n+4$ and \eqref{SN-realizationn=1}, $m(S_m)\leq 4$. For this we first need to recall a concrete realization of $S_m$ over $\PP^1_{\QQ}$.

\begin{lemma}\label{lemma:constructing_SN}
Let $a,b,c\in \PP^1(\QQ)$ be distinct and $m>3$. 
There exists a cover $\phi\colon C\to \PP^1_{\QQ}$ with Galois group $S_m$ such that $\RamG(\phi) = \{a,b,c\}$ and the inertia groups at $a,b,c$ are generated by cycles of length $n,n-1,2$ respectively and $U(\phi)=\{\infty\}$. 
\end{lemma}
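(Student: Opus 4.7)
The plan is to invoke classical rigidity to produce the cover and then verify its universally ramified set is exactly $\{\infty\}$ via a real-topology argument for the infinite prime together with the bounds from Section~6 for the finite primes.

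I would first observe that the triple of conjugacy classes in $S_m$ consisting of an $m$-cycle, an $(m{-}1)$-cycle, and a transposition (with product $1$) is a classical rational rigid triple. Rationality holds because conjugacy in $S_m$ is determined by cycle type and cycle type is preserved under coprime powers; rigidity is a standard character-theoretic check. By the rigidity theorem (see~\cite{SerreTopics,MalleMatzat,Voelklein}) this produces a regular Galois cover $\phi_0\colon C\to\PP^1_\QQ$ with Galois group $S_m$, defined over $\QQ$, branched at three rational points with the prescribed inertia types. Since $\PGL_2(\QQ)$ acts sharply $3$-transitively on $\PP^1(\QQ)$, composing $\phi_0$ with the unique Möbius transformation sending those three branch points (in the desired order) to $(a,b,c)$ yields a cover $\phi$ with branch locus exactly $\{a,b,c\}$ and inertia generators of cycle types $m,m{-}1,2$ at $a,b,c$ respectively.

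To see that $\infty\in U(\phi)$, I would use the real topology of $\phi$. Complex conjugation acts on $C(\mathbb{C})$ fixing $C(\RR)$, and on the fiber over any real non-branch point it induces an involution $\tau\in S_m$ that is locally constant on each of the three arcs of $\PP^1(\RR)\setminus\{a,b,c\}$. The branch-cycle compatibility for real covers forces $\tau\sigma\tau^{-1}=\sigma^{-1}$ for every inertia generator $\sigma$ at an adjacent branch point. Hence $\tau=1$ on some arc would require $\sigma=\sigma^{-1}$, i.e., $\sigma$ of order $\leq 2$, at both adjacent branch points. But for $m>3$ the $m$-cycle and the $(m{-}1)$-cycle both have order exceeding $2$, and each of the three arcs $(a,b),(b,c),(c,a)$ is adjacent to at least one of them. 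Thus $\tau\neq 1$ on every arc, so every real specialization has at least one pair of non-real conjugate embeddings and is ramified at $\infty$; specializations at the three branch points themselves are non-reduced, hence ramified everywhere. Therefore $\infty\in U(\phi)$.

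To rule out any finite prime from $U(\phi)$, I would appeal to the inclusion $U_\infty(\phi)\subseteq V(\phi)\cup B(\phi)$ of \eqref{eq:ubdbyvb}. Since $\RamG(\phi)$ consists of three degree-one components, \eqref{eq:Binvariant} gives $B(\phi)\subseteq\{p:p+1\leq 3\}=\{2\}$. The vertical ramification $V(\phi)$ is finite and depends on the integral model of $\phi$; choosing the explicit model coming from the genus-$0$ intermediate cover $C/S_{m-1}\to\PP^1$ of degree $m$ (given by a polynomial of the form $x\mapsto x^m+c_1 x^{m-1}$ after the $\PGL_2$-twist), one computes the discriminant $\pm m^m t^{m-2}(t-c)$ and extracts an explicit finite set containing $V(\phi)$. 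For each remaining prime $p\in V(\phi)\cup\{2\}$ I would exhibit an $[a_0:b_0]\in\PP^1(\QQ)$ with $v_p(d_{\phi,\ZZ}(a_0,b_0))=0$ by inspection (possible because the three branch points do not exhaust $\PP^1(\FF_p)$ for $p\geq 3$, and the case $p=2$ is handled by direct check); Proposition~\ref{prop:disc_ram} then yields $p\notin\Ram(A^\phi_{[a_0:b_0]}/\QQ)$, so $p\notin U(\phi)$.

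The main obstacle is the final step: upgrading the bound $U_\infty(\phi)\subseteq V(\phi)\cup B(\phi)$ to the exact equality $U(\phi)=\{\infty\}$. The real-topology argument for $\infty$ and the rigidity construction are classical and uniform in $\{a,b,c\}$, but eliminating every small prime from $U(\phi)$—most delicately $p=2$ when $\{a,b,c\}\bmod 2$ covers $\PP^1(\FF_2)$, and primes dividing $m!$ that may contribute to $V(\phi)$—requires hands-on control of the integral model, which is the subtlest aspect of the argument.
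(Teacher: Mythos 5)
Your proposal takes a genuinely different route from the paper's proof in all three steps, but the finite-primes step has a real gap. The paper does not appeal to the rigidity theorem: it writes down the explicit genus-zero cover $x\mapsto x^m-x^{m-1}$ (i.e.\ $f(X,Y)=X^m-X^{m-1}-Y$), takes $F$ to be the splitting field of $f$ over $\QQ(Y)$, and Möbius-transforms the branch locus $\{0,\frac{m-1}{m},\infty\}$ to $\{a,b,c\}$. For $\infty\in U$, the paper observes that $f'(X,y)$ has only two roots, so by Rolle $f(X,y)$ has at most three real roots; since $m>3$, no specialization is totally real. This is considerably more elementary than your branch-cycle/complex-conjugation argument, which, while not obviously wrong, requires the full real Hurwitz formalism to justify the relation $\tau\sigma\tau^{-1}=\sigma^{-1}$ and the behavior of $\tau$ as one crosses each branch point. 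For finite primes the paper uses the closed-form discriminant $\disc f(X,y)=\pm y^m\bigl((m-1)^{m-1}+m^m y\bigr)$ and for every prime $p$ exhibits a $y$ with $p\nmid\disc f(X,y)$, directly giving $p\notin U$; this handles vertical ramification, the bad primes dividing $m!$, and $p=2$ uniformly with a two-line case analysis.

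The concrete gap is in your treatment of finite primes. You propose, ``for each remaining prime $p\in V(\phi)\cup\{2\}$ I would exhibit an $[a_0:b_0]$ with $v_p(d_{\phi,\ZZ}(a_0,b_0))=0$ and apply Proposition~\ref{prop:disc_ram}.'' But this is self-contradictory: if $p\in V(\phi)$, then by definition $\PP^1_{\FF_p}\subset\mathfrak{R}_\phi$, so $v_p(d_{\phi,\ZZ}(a,b))>0$ for \emph{every} $[a:b]\in\PP^1(\QQ)$ and Proposition~\ref{prop:disc_ram} can never be invoked. Your parenthetical justification --- ``the three branch points do not exhaust $\PP^1(\FF_p)$ for $p\geq3$'' --- controls $B(\phi)$, not $V(\phi)$; it is irrelevant to vertical ramification. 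What is actually needed (and what the paper supplies) is a way to show directly that, for each candidate prime $p$, \emph{some} specialization is unramified at $p$; the explicit discriminant of the degree-$m$ subcover does this without ever passing through $V(\phi)\cup B(\phi)$, because an unramified degree-$m$ specialization has an unramified Galois closure. You flag this as ``the subtlest aspect,'' and indeed as written the argument does not close; you would need to replace the appeal to Proposition~\ref{prop:disc_ram} on $V(\phi)$ by the discriminant computation for the explicit model.

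Finally, note the statement itself has a typo (inertia ``cycles of length $n,n-1,2$'' should read $m,m-1,2$); your proposal correctly uses $m$-cycle, $(m{-}1)$-cycle, transposition.
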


\begin{proof}
By applying Mobius transformation, we see that it suffices to find $\phi$ for one triplet $(a,b,c)$. 
Consider $\PP^1\to \PP^1$ given by $x\mapsto x^m-x^{m-1}$, i.e.\ generated by $f(X,Y) = X^m-X^{m-1} -Y$, let $F$ be the splitting field of $f$ over $\QQ(Y)$, and let $\phi \colon C\to \PP^1_{\QQ}$ be the branch covering corresponding to $F/\QQ(Y)$. It is an exercise to show that the Galois group is $S_m$ and that ramification points are $0,u,\infty$, with $u = \frac{m-1}{m}$ and that the inertia groups are generated by cycles of lengths $2,n-1,n$, respectively. For details see \cite[Page~42]{SerreTopics}.

It now remains to calculate $U=U(\phi)$. For any $y\in \QQ\smallsetminus \{0,u\}$, let $A_y$ be the algebra at $y$. Since 
the $X$-derivative of $f(X,y)$ has only $2$ roots ($0$ and $u$), $f(X,y)$ has at most $3$ real roots. Thus $A_y$ has at most three embeddings into $\RR$, which implies as $m>3$ that $A_y\otimes \RR\not\cong \RR^3$. Thus $\infty\in U$. 

A direct application of the discriminant formula $\disc f = \pm m^m \prod f(\alpha)$, where $\alpha$ runs on the set of zeros of $f'$ with multiplication,  shows that 
\[
\disc f(X,y) = \pm Y^m \big( (m-1)^{m-1} + m^m Y\big).
\]
Let $p$ be a prime; we show that there exists $y\in \ZZ$ with $p\nmid \disc f(X,y)$, and thus $p\not \in U$. This will show that $U=\{\infty\}$. 
If $p\nmid m$ and $p> 2$, then $m^m y^m$ takes $p-1>1$ values for $y\not \equiv 0\pmod p$, and so we can take $y\in\ZZ$ with  $m^m y \not\equiv -(m-1)^{m-1},0\mod p$; so $p\nmid \disc f(X,y)$, as needed. If $p\mid m$, then $p\nmid \disc f(X,1)$. We are left with the case $p=2$ and $m$ odd; then $p\mid m-1$, so $p\nmid \disc f(X,1)$.   
\end{proof}

\begin{proof}[Proof of \eqref{SN-realization} and \eqref{SN-realizationn=1}]
We just apply the construction appeared in the proof of \eqref{prime_quotient} to the cover  $\phi\colon C\to \PP^1$ given in Lemma~\ref{lemma:constructing_SN} that is ramified at $(\infty,0,1)$ with the inertia group at $1$ being generated by a transposition.

This gives a $(U,\bfd_{n+2})$ realization of $S_m^n$, with $U=\{\infty\}$ if $n=1$ and $U\subseteq \{\infty,q\}$ if $n\geq 2$. Thus by \eqref{eq:B-GTZ-bd},   Proposition~\ref{prop:formulation for MRP} gives that 
\[
m(S_m^n) \leq n+4,
\]
as needed for \eqref{SN-realization}, and that 
\[
m(S_m)\leq 4,
\]
as needed for \eqref{SN-realizationn=1}.
\end{proof}

We conclude by proving our results for rational rigid groups.

\begin{proof}[Proof of \eqref{rigid}]
Let $G$ be a group with a rational rigid $r$-tuple. 
By \cite[Theorem~8.1.1]{SerreTopics}, there exists a geometrically irreducible branched covering $\phi\colon C\to \mathbb{P}^1_\QQ$ with  $\RamG(\phi)=\{1,\ldots, r\}$.  Let $T= \{p\leq r\}\cup \Prms(|G|)$. If $p\not\in T$,  by  
\cite[Theorem~1.2]{Beckmann91}, $p$ is unramified at $A^{\phi}_{r+1}$, $r+1\in \AA^1(\QQ)\subseteq \PP^1(\QQ)$. So $U(\phi)\subseteq T$. Now Proposition~\ref{prop:formulation for MRP} and \eqref{eq:B-GTZ-bd} immediately gives $m(G) \leq r+\#T$.
\end{proof}

\begin{proof}[Proof of \eqref{rigidrigidrigid}]
By Corollary~\ref{eq:corrigid}, $G^n$ has a rational rigid $s$-tuple with $s=d(G^{ab})(n-1) + r = d(G^{ab}) n + O(1)$. Note that by the prime number theorem $\#\{ p \leq s\} = O(n/\log n)$ and that $\Prms(|G|^n)=\Prms(|G|)=O(1)$. Hence \eqref{rigid} gives that
\[
m(G) \leq d(G^{ab}) n + O\left(\frac{n}{\log (n)}\right).
\]
\end{proof}

\subsection*{Acknowledgments}
We thank Olivier Ramar\'e for introducing to us the theory of weighted sieve and to  Yonatan Harpaz and Arno Fehm for helpful discussions. 

The first author was partially supported by the Israel Science Foundation, grant no.\ 952/14.

\bibliographystyle{plain}

\end{document}